\newtheorem{Theorem}{Theorem}[section]
\newtheorem{Lemma}[Theorem]{Lemma}
\newtheorem{Proposition}[Theorem]{Proposition}
\newtheorem{Corollary}[Theorem]{Corollary}
\theoremstyle{definition}
\newtheorem{Definition}[Theorem]{Definition}
\newtheorem{Remark}[Theorem]{Remark}
\newtheorem{Example}[Theorem]{Example}
\newtheorem{Notation}[Theorem]{Notation}
\newcommand{\PLR}{\mathcal{P}^{LR}_k(\Omega)}
\newcommand{\PLRb}{\mathcal{P}^{LR}_k(\Omega')}
\newcommand{\Zd}{\mathbb{Z}[\delta]}
\newcommand{\tb}{\vartriangle}
\newcommand{\weight}{\mathsf{w}}
\newcommand{\PLRn}{\widehat{\mathcal{P}}^{LR}_{n+2}(\Omega)}
\newcommand{\PLRnb}{\widehat{\mathcal{P}}^{LR}_{n+2}(\Omega')}
\newcommand{\PLRk}{\widehat{\mathcal{P}}^{LR}_{k}(\Omega)}
\newcommand{\PLRkb}{\widehat{\mathcal{P}}^{LR}_{k}(\Omega')}
\newcommand{\ab}{{\bf a}}
\newcommand{\TLR}{T^{LR}_k(\Omega)}
\newcommand{\lob}{\mathcal{L}_\bullet}
\newcommand{\lot}{\mathcal{L}_\bullet^{\tb}}
\newcommand{\loc}{\mathcal{L}_\circ}
\newcommand{\locb}{\mathcal{L}_\bullet^{\circ}}
\newcommand{\TLD}{\mathrm{TL}(\widetilde{D}_{n+2})}
\newcommand{\Bn}{\widetilde{B}_{n+1}}
\newcommand{\Dn}{\widetilde{D}_{n+2}}
\newcommand{\Cn}{\widetilde{C}_{n}}
\newcommand{\DB}{\mathbb{D}(\widetilde{B}_{n+1})}
\newcommand{\DD}{\mathbb{D}(\widetilde{D}_{n+2})}
\newcommand{\du}{\mathbf{d_1}}
\newcommand{\dd}{\mathbf{d_2}}
\newcommand{\db}{\mathtt{ad}(\widetilde{B}_{n+1})}
\newcommand{\dbD}{\mathtt{ad}(\widetilde{D}_{n+2})}
\newcommand{\TL}{\operatorname{TL}}
\newcommand{\FC}{\operatorname{FC}}
\newcommand{\ALT}{\operatorname{ALT}}
\newcommand{\LP}{\operatorname{LP}}
\newcommand{\RP}{\operatorname{RP}}
\newcommand{\LRP}{\operatorname{LRP}}
\newcommand{\PZZ}{\operatorname{PZZ}}
\newcommand{\cp}{\mathrm{cp}}
\newcommand{\HZ}{H_{Z_1}}
\newcommand{\HZZ}{H_{Z_2}}
\definecolor{springgreen}{rgb}{0.0, 1.0, 0.5}
\definecolor{magenta(process)}{rgb}{1.0, 0.0, 0.56}
\begin{document}
\title[Diagrammatic representations of affine types $\widetilde{D}$ and $\widetilde{B}$]{Diagrammatic representations of Generalized Temperley--Lieb algebras of affine types $\widetilde{D}$ and $\widetilde{B}$}

\author[R. Biagioli]{Riccardo Biagioli}

\address{R. Biagioli, E. Sasso: Dipartimento di Matematica, Universit\`a di Bologna, Piazza di Porta San Donato 5, 40126 Bologna, Italy}
\email{riccardo.biagioli2@unibo.it, elisa.sasso2@unibo.it}

\author[G. Fatabbi]{Giuliana Fatabbi}

\address{G. Fatabbi: Dipartimento di Matematica, Universit\`a degli Studi di Perugia, Via Vanvitelli 1, 06123 Perugia, Italy}
\email{giuliana.fatabbi@unipg.it}

\author[E. Sasso]{Elisa Sasso}

\begin{abstract}
Let $(W,S)$ be an affine Coxeter system of type $\Gamma$, with $\Gamma$ equal to $\widetilde{D}$ or $\widetilde{B}$, and $\TL(\Gamma)$ the corresponding generalized Temperley--Lieb algebra. In this paper we define an infinite dimensional associative algebra made of decorated diagrams that is isomorphic to $\TL(\Gamma)$.  Moreover, we describe an explicit basis for such an algebra consisting of special decorated diagrams that we call admissible. Such basis is in bijective correspondence with the classical monomial basis of the generalized Temperley--Lieb algebra indexed by the fully commutative elements of $W$. 
\end{abstract}
\date{\today}

\keywords{Coxeter groups, Temperley--Lieb algebras, heaps of pieces, fully commutative elements, diagrammatic representations}

\maketitle


\section*{Introduction}\label{sec:intro}

The Temperley--Lieb (TL) algebra is a very classical mathematical object studied in algebra, combinatorics, statistical mechanics and mathematical physics, introduced by Temperley and Lieb in 1971 \cite{TemperleyLieb}. Kauffman \cite{Kauffman} and Penrose \cite{Penrose} showed that the TL algebra can be realized as a diagram algebra, that is an associative algebra with a basis given by certain diagrams on the plane. On the other hand, Jones presented the TL algebra in terms of abstract generators and relations. In \cite{Jones-hecke}, he also showed that this algebra occurs naturally as a quotient of the Hecke algebra of type $A$. The realization of the TL algebra as a Hecke algebra quotient was generalized by Graham in \cite{Graham}. He defined the so-called \textit{generalized Temperley--Lieb algebra} TL($\Gamma$) for a Coxeter system of arbitrary type $\Gamma$ and showed that it has a monomial basis indexed by the fully commutative elements $\FC(\Gamma)$ of the underlying Coxeter group. Over the years, some diagrammatic representations for $\TL(\Gamma)$ have been found. More precisely, Green defined a diagram calculus in finite Coxeter types $B$, $D$, $E$ and $H$, respectively in \cite{Green-general}, \cite{Green_TLE} and \cite{Green-cellular}. For affine types, in \cite{FanGreen_Affine} Fan and Green provided a realization of TL($\widetilde{A}$) as a diagram algebra on a cylinder and, more recently, in \cite{ErnstDiagramI, ErnstDiagramII}, Ernst represented $\TL(\widetilde{C}$) as an algebra of decorated diagrams.
\smallskip

The aim of this paper is to give diagrammatic representations for the two remaining affine families, namely types $\widetilde{D}$ and $\widetilde{B}$. Taking inspiration from Ernst's work on type $\widetilde{C}$,  we consider two subsets of the decorated diagrams introduced in~\cite{ErnstDiagramI}, we endow them  with an algebra structure, and then we define, modulo some relations, two new quotient algebras, denoted by $\DD$ and $\DB$. Our main results show that these algebras are actually isomorphic to $\TL(\Dn)$ and $\TL(\Bn)$, respectively. 
\smallskip

In order to do so, we first give an explicit description of a special family of irreducible decorated diagrams, called \textit{admissible}, which turns out to be a linear basis of $\DD$. We then introduce a notion of length for any admissible diagram that measures in a certain sense the distance of the decorations from the left and the right side of the diagram. Finally, we define procedures that allow us to factorize any decorated diagram in terms of the so-called simple diagrams. The most relevant, called the \textit{cut and paste operation}, transforms an admissible diagram of a certain type in another one of lower length by acting on some of its edges. Now it remains to prove that our algebra of decorated diagrams is a faithful representation of $\TL(\Dn)$. 
\smallskip

 Several strategies have been used to prove the faithfulness of the known diagrammatic representations of TL algebras of arbitrary type. For instance, in type $\widetilde{C}$ two different proofs of injectivity exist: the original one due to Ernst~\cite{ErnstDiagramII} uses non-cancellable elements in Coxeter groups, while more recently in \cite{BFC}, an algorithmic proof based on a decomposition of the heaps associated with fully commutative elements is proposed. In this paper, we first prove the injectivity of the representation in type $\widetilde{D}$ by using an inductive argument similar to that of Fan and Green in type $\widetilde{A}$ \cite{FanGreen_Affine}. Then, following a suggestion of Green \cite{RG}, we define ad hoc injections between $\FC(\Bn)$ and $\FC(\Dn)$ and between $\DB$ and $\DD$, from which we deduce that $\DB$ is a faithful diagrammatic representation of $\TL(\Bn)$.
\smallskip

This paper is organized as follows. In Section~\ref{sub:fullycomm} we describe the necessary background in Coxeter group theory, heap theory and the presentations of the TL algebras we use. In Section~\ref{sec:undecorated} we recall Ernst's definition of decorated diagrams and we introduce our quotient subalgebra $\DD$ by defining appropriate relations on the set of decorations. In Section~\ref{sec:admissible} we define the admissible diagrams and we also introduce a notion of length for them. In Section~\ref{sec:simple edges-cp} we define the cut and paste operation. Then in Section~\ref{sec:isom} we prove that the two algebras $\TL(\Dn)$ and $\DD$ are isomorphic. In Section~\ref{Green}, we explain how all the material developed in type $\widetilde{D}$ needs to be changed in order to define a diagrammatic representation of $\TL(\Bn)$ and we prove the faithfulness of this representation. Finally, in Section~\ref{sec:ultimo} we discuss some further developments related to our work.
\section{Fully commutative elements and generalized Temperley--Lieb algebras}\label{sub:fullycomm}

Let $M$ be a square symmetric matrix indexed by a finite set $S$, satisfying $m_{ss}=1$ and, for $s\neq t$, $m_{st}=m_{ts}\in\{2,3,\ldots\}\cup\{\infty\}$. The \textit{Coxeter group} $W$ associated with the  \textit{Coxeter matrix} $M$ is defined by generators $S$ and relations $(st)^{m_{st}}=1$ if $m_{st}<\infty$. These relations can be rewritten more explicitly as $s^2=1$ for all $s$, and \[\underbrace{sts\cdots}_{m_{st}}  = \underbrace{tst\cdots}_{m_{st}},\]  where $m_{st}<\infty$, the latter being called \textit{braid relations}. When $m_{st}=2$, they are simply \textit{commutation relations} $st=ts$; when $m_{st}=\infty$, there are no relations between $s$ and $t$.

The \textit{Coxeter graph} $\Gamma$ associated to the \textit{Coxeter system} $(W,S)$ is the graph with vertex set $S$ and, for each pair $\{s,t\}$ with $m_{st}\geq 3$, an edge between $s$ and $t$ labeled by $m_{st}$. When $m_{st}=3$ the edge is usually left unlabeled since this case occurs frequently. Therefore nonadjacent vertices correspond precisely to commuting generators.
 
\begin{figure}[h]
\centering
\includegraphics[width=0.9\linewidth]{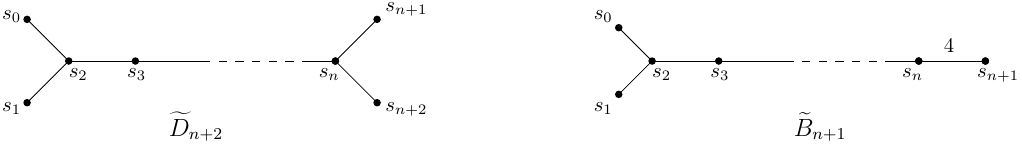} 
\caption{Coxeter graphs of type $\Dn$ and $\Bn$.}
\label{BDtilde}
\end{figure}

For $w\in W$, the \textit{length} of $w$, denoted by $\ell(w)$, is the minimum length $l$ of an expression $w=s_1\cdots s_l$ with $s_i\in S$. The expressions of length $\ell(w)$ are called \textit{reduced} and the set of all the reduced expressions of $w$ is denoted by $\mathcal{R}(w)$. Expressions are usually denoted with bold letters. 
Given a reduced expression $\mathbf{w}$ of $w$, we say that a \textit{factor} of ${\bf w}$ is a subsequence of letters of ${\bf w}$ in consecutive positions.

A fundamental result in Coxeter group theory, sometimes called the \textit{Matsumoto property}, states that any reduced expression of $w$ can be obtained from any other reduced expression of $w$ by using only braid relations (see for instance~\cite{Humphreys}). 

We define an equivalence relation in $\mathcal{R}(w)$ as the reflexive transitive closure of the relation: given $\mathbf{w}, \mathbf{w'}\in \mathcal{R}(w)$, $\mathbf{w}\sim\mathbf{w'}$ if and only if $\mathbf{w'}$ can be obtained from $\mathbf{w}$ by a single commutation relation. Equivalence classes of this relation are called \textit{commutation classes}.

\begin{Definition}
	An element $w\in W$ is \textit{fully commutative} ($\FC$) if $w$ has exactly one commutation class. The set of all fully commutative elements of $W$ is denoted by $\FC(\Gamma)$, where $\Gamma$ is the associated Coxeter graph.
\end{Definition}

\begin{Example}\label{ex:fc}
    The element $w=s_3s_5s_2s_4s_0s_1s_3s_2$ is in  $\FC(\widetilde{D}_6)$ and $u=s_4s_2s_1s_3s_2$ in $\FC(\widetilde{D}_4)$, while the element $v=s_1s_2s_1s_3$ in $\widetilde{D}_4$ is not fully commutative.
\end{Example}
 
The next proposition, due to Stembridge, characterizes the FC elements of type $\Gamma$ and it is useful to test whether a given element is FC or not.

\begin{Proposition} [Stembridge \cite{Stem-fullycomm}, Prop. 2.1]
\label{prop:caracterisation_fullycom}
An element $w\in \FC(\Gamma)$ if and only if for all $s,t$ such that $3\leq m_{st}<\infty$, there is no reduced expression of $w$ that contains the factor $\underbrace{sts\cdots}_{m_{st}}$.
\end{Proposition}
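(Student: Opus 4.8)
The plan is to prove both implications using the Matsumoto property recalled above, which states that any two reduced expressions of $w$ are connected by a sequence of braid relations (with the commutation relations being the special case $m_{st}=2$). The key observation driving the whole argument is that, along such a sequence, every intermediate word is again a reduced expression of $w$, and a genuine braid relation with $3\le m_{st}<\infty$ can only be applied to a word that literally contains one of the factors $\underbrace{sts\cdots}_{m_{st}}$ or $\underbrace{tst\cdots}_{m_{st}}$.

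For the implication ``$\Leftarrow$'', suppose no reduced expression of $w$ contains a factor $\underbrace{sts\cdots}_{m_{st}}$ with $3\le m_{st}<\infty$; letting the ordered pair $(s,t)$ vary, this forbids \emph{both} sides of every braid relation with $m_{st}\ge 3$. Given two reduced expressions of $w$, Matsumoto's property produces a chain of reduced expressions joining them in which consecutive terms differ by a single braid relation. Since each term of the chain is a reduced expression of $w$, none of them contains a forbidden factor, so no braid relation with $m_{st}\ge 3$ can occur along the chain. Hence every step is a commutation relation, and $w$ is FC.

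For the converse ``$\Rightarrow$'', I argue by contraposition. Suppose some reduced expression $\mathbf{u}$ of $w$ contains the factor $\underbrace{sts\cdots}_{m_{st}}$ with $3\le m_{st}<\infty$. Applying the corresponding braid relation in place yields a word $\mathbf{v}$ of the same length representing $w$, hence another reduced expression. To see that $\mathbf{u}$ and $\mathbf{v}$ cannot be related by commutation relations alone, I would track the subword obtained from a reduced expression by deleting every letter different from $s$ and $t$. A commutation relation swaps two adjacent letters $a,b$ with $m_{ab}=2$; since $m_{st}\ge 3$, such a swap can never interchange two letters both lying in $\{s,t\}$, and therefore it leaves this $\{s,t\}$-subword unchanged. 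On the other hand, passing from $\mathbf{u}$ to $\mathbf{v}$ replaces a consecutive alternating block $sts\cdots$ of length $m_{st}$ inside the $\{s,t\}$-subword by the block $tst\cdots$ of the same length, which changes the subword because $s\ne t$. Thus $\mathbf{u}$ and $\mathbf{v}$ have distinct $\{s,t\}$-subwords and cannot be connected by commutation relations, so $w$ is not FC.

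The only genuinely delicate point is the invariant used in the converse: one must verify that commutation relations preserve the $\{s,t\}$-subword, which rests precisely on the fact that two occurrences of generators from $\{s,t\}$ can never be adjacent-swapped when $m_{st}\ge 3$. Everything else is a direct bookkeeping application of the Matsumoto property.
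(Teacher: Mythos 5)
Your proof is correct: the $\{s,t\}$-subword invariance under commutation moves is exactly the right key invariant for the contrapositive direction, and the observation that every intermediate word in a Matsumoto chain is again a reduced expression of $w$ (so no long braid move can ever fire under the hypothesis) correctly closes the other direction. The paper itself states this result without proof, citing Stembridge, and your argument is essentially Stembridge's original one, so it matches the intended proof.
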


 The concept of heap helps to capture the notion of full commutativity. We briefly describe a way to define the above mentioned heap and its relations with full commutativity; for more details see for instance~\cite{BJN_FC} and the references cited there.

Let $(W,S)$ be a Coxeter system with Coxeter graph $\Gamma$, and fix a reduced expression $\mathbf{w}=s_{a_1}\cdots s_{a_l}$ with $s_{a_j} \in S$. Define a partial ordering $\prec$ on the index set $\{1,\ldots, l\}$ as follows: set $i\prec j$ if $i<j$ and $s_{a_i}$, $s_{a_j}$ do not commute, and extend it by transitivity. Observe that if $i<j$ and $s_{a_i} = s_{a_j}$, then $i\prec j$ by transitivity and the fact that $\mathbf{w}$ is reduced.
We denote by $H({\bf w})$ the triple $(\{1, \ldots, l\}, \prec, \varepsilon)$, where $\varepsilon:i\mapsto s_{a_i}$ is the labeling map and we call $H(\mathbf{w})$ a \textit{labeled heap of type $\Gamma$} or simply a \textit{heap}. Moreover, if $F$ is a subset of $\{1, \ldots, l\}$ and $\prec'$ and $\varepsilon'$ are the restrictions of $\prec$ and $\varepsilon$ to $F$, then $(F,\prec', \varepsilon')$ is called a \textit{subheap} of $H({\bf w})$.  In particular, given a heap $H=H({\bf w})$ and a subset $I\subset S$, we denote by $H_{I}=(F, \prec', \varepsilon')$ the subheap of $H$ where $F$ is the set of all elements of $\{1, \ldots, l\}$ with labels in $I$ (see~\cite[\S 2]{ViennotHeaps}).

Heaps are well-defined up to commutation classes~\cite{ViennotHeaps}; that is, if ${\bf w}, {\bf w'} \in \mathcal{R}(w)$ belong to the same commutation class, then the corresponding labeled heaps are isomorphic. This means that there exists a poset isomorphism between the heaps that preserves the labels. Therefore, when $w$ is FC  we can define $H(w):=H(\mathbf{w})$, where ${\bf w}$ is any reduced expression for $w$. Heaps of this form will be called \textit{FC heaps}. Moreover, if $H(w)$ is a FC heap then its linear extensions are in bjiection with $\mathcal{R}(w)$; see \cite[Proposition 2.2]{Stem-fullycomm}.
\smallskip

In the Hasse diagram of $H({\bf w})$, vertices with the same labels are drawn in the same column. Moreover, as in \cite{ErnstDiagramI}, we draw heaps from top to bottom, namely the vertices on the top of $H({\bf w})$, correspond to generators occurring on the left of ${\bf w}$. We often identify vertices with their labels.

\begin{Example}
The heaps of $w=s_3s_5s_2s_4s_0s_1s_3s_2$, $u=s_4s_2s_1s_3s_2$ and $v=s_1s_2s_1s_3$ (which is not fully commutative) of Example \ref{ex:fc} are represented in Figure~\ref{heapD}. The subheap of $H(w)$ corresponding to the subset $\{s_2,s_3\}$ is dashed.
\end{Example}

\begin{figure}[!ht]
\begin{center}
\includegraphics[height=25mm, keepaspectratio]{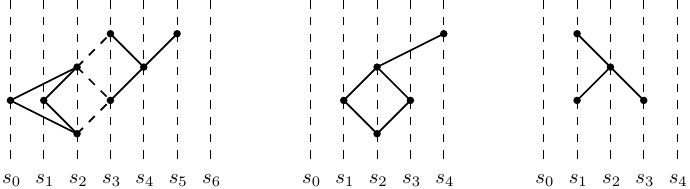}
\caption{The heap $H(w)$, the subheap $H(w)_{\{s_2,s_3\}}$ dashed, $H(u)$ and $H(v)$.}\label{heapD}  
\end{center}
\end{figure}

\begin{Definition}
\label{defi:alternating}
Let $(W,S)$ be a Coxeter system of type $\Gamma$, $w \in \FC(\Gamma)$, and $H:=H(w)$. We say that $H$ is \textit{alternating} if for each pair of non-commuting generators $s,t$ in $S$, the chain $H_{\{s,t\}}$ has alternating labels $s$ and $t$ from top to bottom.
\end{Definition}

Both examples in Figure~\ref{heapD} are alternating. Note that if $H(w)$ is alternating, then any reduced expression $\mathbf{w}$ of $w$ is \textit{alternating} in the sense that for each  non-commuting generators  $s,t \in S$, the occurrences of $s$ alternate with those of $t$ in $\mathbf{w}$. In this case we also say that $w \in \FC(\Gamma)$ is alternating. 
\smallskip

In this work, we focus on the Coxeter systems of types $\Dn$ and $\Bn$. Their Coxeter graphs are depicted in Figure \ref{BDtilde}, and they completely determine the corresponding Coxeter systems. For more details and combinatorial characterization of such groups we refer to \cite[\S 8]{BB}.

As recalled in the introduction, FC elements in a Coxeter group $W$ index a basis of the so-called generalized Temperley--Lieb algebra of $W$. The following are the presentations of $\TL(\Dn)$ and $\TL(\Bn)$ given by Green in \cite[Proposition 2.6]{Green-Star} that will be used in our work as definitions.

\begin{Definition}\label{def:tl-algebras}
The \textit{Temperley--Lieb algebra of type $\Dn$}, denoted by TL($\Dn$), is the $\Zd$-algebra generated by $\{b_0, b_1, \ldots, b_{n+2}\}$ with defining relations:

\begin{enumerate}
\item[(d1)] $b_i^2=\delta b_i$ for all $i\in \{0,\ldots, n+2\}$;
\item[(d2)] $b_i b_j=b_jb_i$ if $s_i$ and $s_j$ are not adjacent nodes in the Coxeter graph of type $\Dn$;
\item[(d3)] $b_i b_j b_i=b_i$ if $s_i$ and $s_j$ are adjacent nodes in the Coxeter graph of type $\Dn$.
\end{enumerate}
\end{Definition}

\begin{Definition}\label{def:tlB-algebra}
The \textit{Temperley--Lieb algebra of type $\Bn$}, denoted by TL($\Bn$), is the $\Zd$-algebra generated by $\{b_0, b_1, \ldots, b_{n+1}\}$ with defining relations:
\begin{enumerate}
\item[(b1)] $b_i^2=\delta b_i$ for all $i\in \{0,\ldots, n+1\}$;
\item[(b2)] $b_ib_j=b_jb_i$ if $s_i$ and $s_j$ are not adjacent nodes in the Coxeter graph of type $\Bn$; 
\item[(b3)] $b_i b_j b_i=b_i$ if $s_i$ and $s_j$ are adjacent nodes in the Coxeter graph of type $\Bn$, and $\{i,j\}\neq\{n,n+1\}$;
\item[(b4)] $b_i b_j b_i b_j=2b_ib_j$ if $\{i,j\}=\{n,n+1\}$.
\end{enumerate}
\end{Definition}

Let $\Gamma$ be $\Dn$ or $\Bn$. For any $s_{i_1}\cdots s_{i_k}$ reduced expression of $w$ in $\FC(\Gamma)$, we set
$$b_w=b_{i_1} \cdots \ b_{i_k}.$$
It is easy to see that $b_w$ does not depend on the chosen reduced expression of $w$. Graham proved that the set $\{b_w \mid w\in \FC(\Gamma)\}$ is a basis for $\TL(\Gamma)$, which is usually called the \textit{monomial basis} (see also \cite[Proposition 2.4]{Green-Star}).
\smallskip

From now on we will focus on the affine Coxeter group of type $\Dn$ whose Coxeter graph is depicted in Figure~\ref{BDtilde}. 

\begin{Notation}
By the description given above, a heap of $w \in \FC(\Dn)$ should be represented in a Hasse diagram with $n+3$ columns labeled by $s_0, s_1, s_2, \ldots, s_{n+2}$, as we did in Figure~\ref{heapD}. However, when dealing with Coxeter systems of type $\Dn$, we will represent heaps in $n+1$ columns with the following criterion. Both vertices with label $s_0$ and $s_1$ (respectively $s_{n+1}$ and $s_{n+2}$) will be drawn in the first (respectively last) column of $H(w)$. Moreover, when $s_0s_1$ (respectively $s_{n+1}s_{n+2}$) occurs in a reduced expression of $w$, we represent the vertices associated to these $s_0$ and $s_1$ (respectively $s_{n+1}$ and $s_{n+2}$) with a single mark point labeled $s_0s_1$ (respectively $s_{n+1}s_{n+2}$). Vertices labeled by $s_0$, $s_1$ and $s_0s_1$ will be called \textit{1-elements} while vertices labeled by $s_{n+1}$, $s_{n+2}$ and $s_{n+1}s_{n+2}$ will be called \textit{(n+1)-elements} and we will explicitly write their labels in the heap representation. For instance from now on, the two heaps in Figure~\ref{heapD} will be depicted as in Figure~\ref{heapDmod}.
\end{Notation}

\begin{figure}[hbtp]
\centering
\includegraphics[height=25mm, keepaspectratio]{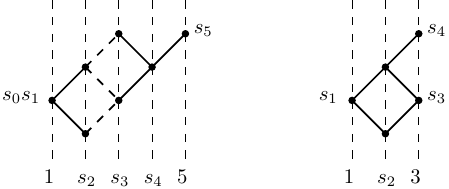}
\caption{The new representations of $H(w)$ and $H(u)$ of Figure~\ref{heapD}.}
\label{heapDmod}
\end{figure}

Fully commutative heaps of type $\Dn$ have been classified in  \cite[\S 3.2]{BJN_FC}. We present now a slightly modified version of the original classification \cite[Definition 3.9]{BJN_FC}.
\smallskip

If $i\in\{2, \ldots, n\}$, a \textit{peak} is a heap of the form:
\begin{align*}
    P_{\rightarrow}(s_i)&:=H(s_is_{i+1}\cdots s_ns_{n+1}s_{n+2}s_n\cdots s_{i+1}s_i) \mbox{ or }\\
    P_{\leftarrow}(s_i)&:=H(s_is_{i-1}\cdots s_2 s_0s_1 s_2\cdots s_{i-1}s_i).
\end{align*}

If $H$ is a heap of type $\Dn$ and $i\in\{2, \ldots, n\}$, then for brevity, we denote by $H_{\{\leftarrow s_i\}}$ (respectively, $H_{\{\rightarrow s_i\}}$) the subheap of $H$ with labels in the set $\{s_0s_1, s_0, s_1, s_2, \ldots, s_{i-1}, s_i\}$ (respectively, $\{s_i, s_{i+1}, \ldots, s_{n+1}, s_{n+2}, s_{n+1}s_{n+2}\}$). 

\begin{Definition}\label{family}
We define the following five families of FC heaps of type $\Dn$.

\begin{itemize}
\item[\textbf{(PZZ)}] \textit{Pseudo Zigzags}. $H\in$ (PZZ) if $H=H(w)$ where $w$ has a reduced expression with at least one occurrence of both $s_0s_1$ and $s_{n+1}s_{n+2}$ and that is a factor of an expression in the commutation class of the element $\left(s_0s_1 s_2\cdots s_{n}s_{n+1}s_{n+2}s_{n}\cdots s_2\right)^k$ for some integer $k\geq 1$.
\smallskip

\item[\textbf{(ALT)}] \textit{Alternating}. $H\in$ (ALT) if it is alternating in the sense of Definition \ref{defi:alternating}, where $\Gamma$ is of type $\Dn$, but it is different from $\HZ$ and $\HZZ$ in Figure~\ref{heapspart}. Moreover, if $H$ contains two or more 1-elements (respectively ($n+1$)-elements), those must be $s_0$ and $s_1$ (respectively $s_{n+1}$ and $s_{n+2}$) and have to alternate. 
\smallskip

\item[\textbf{(LP)}] \textit{Left-Peaks}. $H\in$ (LP) if there exists $j_{\ell}\in \{2, \ldots, n\}$ such that:
\begin{itemize}
\item[(1)] $H_{\{\leftarrow s_{j_{\ell}}\}}=P_{\leftarrow}(s_{j_{\ell}})$;
\item[(2)] There is no $s_{j_{\ell}+1}$-element between the two $s_{j_{\ell}}$-elements;
\item[(3)] $H_{\{\rightarrow s_{j_{\ell}}\}}$ is alternating when the two $s_{j_{\ell}}$-elements are identified.
\item[(4)] $H$ is not a pseudo zigzag.
\end{itemize}
\smallskip

\item[\textbf{(RP)}] \textit{Right-Peaks.} $H\in$ (RP) if there exists $j_r\in \{2, \ldots, n\}$ such that:
\begin{itemize}
\item[(1)]  $H_{\{\rightarrow s_{j_r}\}}=P_{\rightarrow}(s_{j_r})$;
\item[(2)] There is no $s_{j_r-1}$-element between the two $s_{j_r}$-elements;
\item[(3)] $H_{\{\leftarrow s_{j_r}\}}$ is alternating when the two $s_{j_r}$-elements are identified.
\item[(4)] $H$ is not a pseudo zigzag.

\end{itemize}
\smallskip

\item[\textbf{(LRP)}] \textit{Left-Right-Peaks.} $H\in$ (LRP) if there exist $2\leq j_{\ell}< j_r\leq n$ such that:
\begin{itemize}
\item[(1)] LP(1), LP(2), RP(1) and RP(2) hold;
\item[(2)] $H_{\{s_{j_{\ell}},\ldots,s_{j_r}\}}$ is alternating when both the two $s_{j_{\ell}}$- and the two $s_{j_r}$-elements are identified.
\item[(3)] $H$ is not a pseudo zigzag.
\end{itemize}
\end{itemize}
\end{Definition}

\begin{figure}[hbtp]
\centering
\includegraphics[scale=0.75]{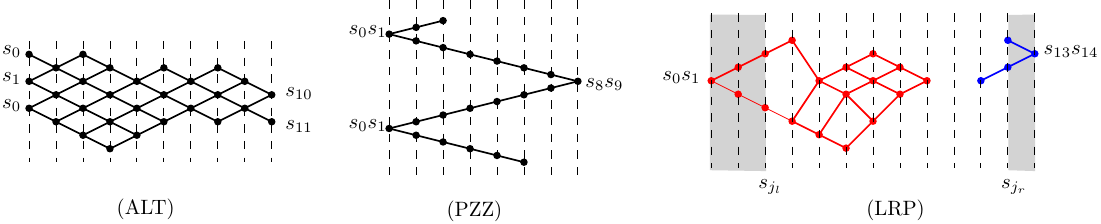}
\caption{Example of FC heaps in different families. The subheap of the (LRP) heap colored in red is in (LP), while the one in blue is in (RP).}
\label{heaps}
\end{figure}

The following result is proved in \cite[Theorem 3.10]{BJN_FC}.
\begin{Theorem}[Classification of FC heaps of type $\Dn$]
A heap of type $\Dn$ is fully commutative if and only if it belongs to one of the families of Definition \ref{family}.
\end{Theorem}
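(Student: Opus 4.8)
The plan is to prove both implications through Stembridge's criterion (Proposition~\ref{prop:caracterisation_fullycom}), read off the heap. Using the bijection between reduced expressions of $w$ and linear extensions of $H(w)$ recalled above, a reduced word of $w$ contains a braid factor $\underbrace{sts\cdots}_{m_{st}}$ exactly when $H(w)$ has a convex chain with alternating labels $s,t$ of length $m_{st}$; hence $H$ is FC if and only if it contains no such chain. For type $\Bn$ the relevant non-commuting pairs are the $m=3$ edges $\{s_0,s_2\}$, $\{s_1,s_2\}$ and $\{s_i,s_{i+1}\}$ for $2\le i\le n-1$, for which a convex chain $sts$ is forbidden, together with the unique $m=4$ edge $\{s_n,s_{n+1}\}$, for which only the longer convex chain $s_ns_{n+1}s_ns_{n+1}$ is forbidden while $s_ns_{n+1}s_n$ is allowed. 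All the analysis below reduces to controlling these chains at the branch $\{s_0,s_1,s_2\}$ and at the double bond $\{s_n,s_{n+1}\}$.

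For the easy implication I would check that every heap in the five families avoids these chains. For (ALT) this is immediate from Definition~\ref{defi:alternating}. The only places a peak heap or a pseudo-zigzag could fail are the two ends. At the right end a peak contributes the pattern $s_ns_{n+1}s_n$, which is admissible for $m=4$, and conditions such as RP(1) prevent it from ever extending to a convex $s_ns_{n+1}s_ns_{n+1}$. At the left end the critical observation is that a would-be forbidden chain $s_2s_0s_2$ (or $s_2s_1s_2$) is never convex once both $s_0$ and $s_1$ are present, because the commuting pair $s_0,s_1$ sits between the two $s_2$'s in the heap and breaks convexity; this is exactly why the families require the $1$-elements to appear as an alternating $s_0,s_1$ sequence (in (ALT)) or as the combined label $s_0s_1$ at the apex of a left peak. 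The alternation clauses LP(3), RP(3), LRP(2) guarantee that the interior of each heap is alternating, so no $sts$ occurs there either.

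The substance is the converse: an arbitrary FC heap $H$ must lie in one of the five families. I would argue by locating the excursions of $H$ towards the two ends, i.e. the maximal chevrons $P_\rightarrow$ reaching $s_{n+1}$ and $P_\leftarrow$ reaching the branch $s_0s_1$. Using the chain conditions above one shows that, away from such excursions, $H$ is forced to be alternating, and that two consecutive right excursions (or two left excursions) cannot occur unless the heap also bounces back to the opposite end in between, since otherwise the $\{s_n,s_{n+1}\}$-subheap or the branch-subheap would contain a forbidden convex chain. This yields a clean dichotomy. If $H$ bounces repeatedly between both ends, each bounce realizing $s_ns_{n+1}s_n$ and $s_0s_1$, then $H$ is a finite factor of $\left(s_0s_1s_2\cdots s_{n+1}s_n\cdots s_2\right)^\infty$, that is $H\in$ (PZZ). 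Otherwise $H$ has at most one full left excursion and at most one full right excursion, and according to whether it has none, only a left one, only a right one, or one of each, I would match $H$ to (ALT), (LP), (RP) or (LRP) respectively, checking in each case that the induced subheaps $H_{\{\leftarrow s_i\}}$ and $H_{\{\rightarrow s_i\}}$ satisfy the numbered conditions of Definition~\ref{family}.

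The main obstacle is the bookkeeping at the overlap between families rather than any single chain computation. Separating (LRP) from (PZZ) is delicate: a heap carrying both a left and a right peak is a pseudo-zigzag precisely when the two peaks interlock into the periodic zigzag word, which is the content of clause LRP(3) and must be decided by inspecting how the two chevrons share the spine $s_2,\ldots,s_n$. Equally delicate are the boundary right-peaks of clause RP(4): when the alternating tail on the left of the peak degenerates to a single $1$-element, the heap could be read either as a right peak with $a$ or $b$ equal to $s_0s_1$ or as a left--right peak, and one must fix the convention (forbidding $a,b=s_0s_1$ in (RP)) so that each FC heap is assigned to exactly one family. Verifying that these conventions make the five families simultaneously disjoint and exhaustive is where the real care is needed; by contrast, the underlying chain analysis at the two ends is routine once the convexity phenomenon at the branch is understood.
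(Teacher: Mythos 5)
First, a point of comparison you could not have known: the paper does not prove this theorem at all. It imports it from \cite[Theorem 3.10]{BJN_FC}, merely repackaging the families there (the new family (PZZ) absorbs the zigzags together with the degenerate right-peaks that condition RP(4) excludes, as the paper notes after the theorem). So your sketch must be measured against the cited classification proof, and in spirit it does follow it: the heap form of Stembridge's criterion (Proposition~\ref{prop:caracterisation_fullycom}) via convex chains with alternating labels, the observation that the commuting pair $s_0,s_1$ breaks convexity of a would-be chain $s_2s_0s_2$ at the fork, the fact that $s_ns_{n+1}s_n$ is harmless because $m_{s_ns_{n+1}}=4$, and a case analysis organized around excursions to the two special ends of the graph are all genuinely the mechanism of \cite{BJN_FC}. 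Your soundness direction is essentially complete at the level of detail such an argument requires.

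The genuine gap is in the converse, which is where all the work of the cited proof lives and which your sketch asserts rather than proves. Two concrete problems. First, your stated justification for the key structural claim is mislocated: if a heap makes two right excursions and in between descends only to some $s_j$ with $j>2$ before turning back, the forbidden configuration is \emph{not} in the $\{s_n,s_{n+1}\}$-subheap nor at the branch-subheap, as you claim; it is the convex chain $s_{j+1}\prec s_j\prec s_{j+1}$ at the turning pair $\{s_j,s_{j+1}\}$ in the \emph{middle} of the spine (between the two $s_{j+1}$-elements there is exactly one $s_j$-element and nothing below it, so convexity holds). The correct argument is a cascading one: each turning point forces two elements of the next label down, pushing the descent all the way to the fork, and this induction is precisely what must be written out, together with the complementary claim that off-excursion portions are alternating and the verification of the numbered clauses LP(1)--(3), RP(1)--(4), LRP(1)--(3) in each case. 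Second, since the statement concerns the paper's \emph{modified} families, one must check that the union of (PZZ) and of (RP) as restricted by RP(4) equals the union of the original zigzag and right-peak families of \cite{BJN_FC}; you treat RP(4) as a disjointness convention, but the theorem does not require disjointness at all --- what it requires is that the degenerate right-peaks excluded by RP(4) are recaptured by (PZZ) (exhaustiveness of the union), which is the small bridging verification the paper performs by construction and your sketch leaves implicit.
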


In the rest of the paper, we say that $w \in \FC(\Dn)$ is in $(\PZZ), (\ALT), (\LP), (\RP)$, or $(\LRP)$  if and only if the corresponding heap $H(w)$ is.

\begin{Remark}\label{rem:pzz-heaps}
With respect to \cite[\S 3.2]{BJN_FC}, we introduced a new family (PZZ) made of zigzags of the original classification, along with some additional heaps that previously belonged to different families. In particular, the heaps $H_{Z_1}:=H(s_0s_1s_2\cdots s_{n+1}s_{n+2})$ and $H_{Z_2}:=H(s_{n+2}s_{n+1}s_n\cdots s_1s_0)$ depicted in Figure \ref{heapspart} are now in (PZZ) while they
were alternating heaps in the previous classification.
Analogously, the left and right peaks having at least one initial or final endpoint situated in column 1 or $n+1$, and labeled by $s_0s_1$ or $s_{n+1}s_{n+2}$, respectively, are now in (PZZ); see Figure \ref{heapspart}, right.
The reasons for these choices will be clarified in Lemmas \ref{PZZheaps} and \ref{Pheaps}.
\end{Remark}

\begin{figure}[hbtp]
\centering
\includegraphics[scale=0.65]{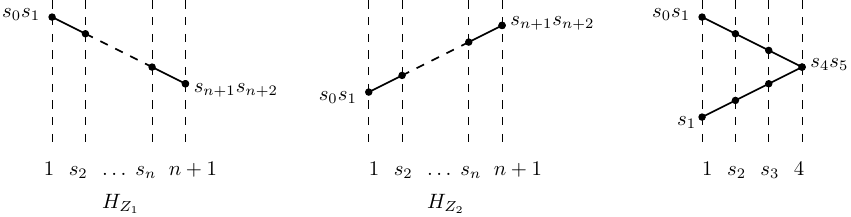}
\caption{}
\label{heapspart}
\end{figure}

\section{Decorated diagrams}\label{sec:undecorated}
In this section we consider a subset of the decorated diagrams defined in \cite{ErnstDiagramI} by Ernst to give his representation of $\TL(\widetilde{C}_{n+1})$. We then introduce a new set of relations to define a quotient subalgebra which will be the main object of our investigations.
\smallskip

A \textit{standard k-box} is a rectangle with $2k$ marked points called \textit{nodes} or \textit{vertices}, labeled as in Figure \ref{kbox}. We will refer to the top of the rectangle as the \textit{north face} and to the bottom as the \textit{south face}. 

Sometimes, it will be useful for us to think of the standard $k$-box as being embedded in the $xy$-plane. In this case, we put the lower left corner of the rectangle at the origin such that each node $i$ (respectively, $i'$) is located at the point $(i, 1)$ (respectively, $(i', 0)$).

\begin{figure}[hbtp]
\centering
\includegraphics[scale=0.7]{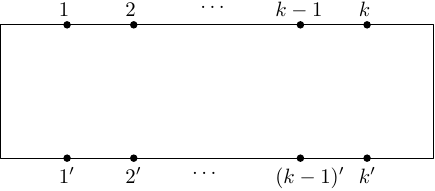}
\caption{The standard $k$-box.}
\label{kbox}
\end{figure}

A \textit{concrete pseudo k-diagram} consists of a finite number of disjoint plane curves, called \textit{edges}, embedded in the standard $k$-box with the following restrictions. The nodes are endpoints of edges. All other embedded edges must be closed (isotopic to circles) and disjoint from the box. We refer to a closed edge as a \textit{loop}. It follows that there cannot exist isolated nodes and that a single edge starts from each node (an example is given in Figure \ref{7box}). A \textit{non-propagating edge} is an edge that joins two nodes of the same face, while a \textit{propagating edge} is an edge that joins a node of the north face to a node of the south face. If an edge joins the nodes $i$ and $i'$ we call it a \textit{vertical edge}. We denote by $\ab(D)$ the number of non-propagating edges on the north face of a concrete pseudo $k$-diagram $D$.

\begin{figure}[hbtp]
\centering
\includegraphics[scale=0.45]{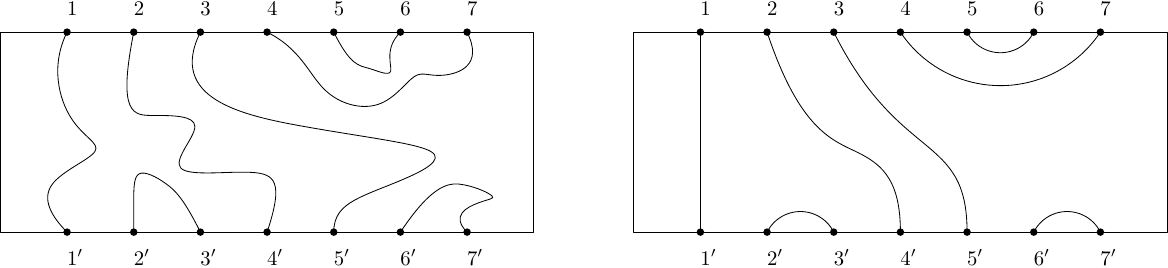}
\caption{A concrete pseudo 7-diagram and a standard representation.}
\label{7box}
\end{figure}

We define an equivalence relation on the set of the concrete pseudo $k$-diagrams. Two concrete pseudo $k$-diagrams are \textit{(isotopically) equivalent} if one can be obtained from the other by isotopically deforming the edges while preserving the faces of the rectangle setwise, in such a way that any intermediate diagram is also a concrete pseudo $k$-diagram.  
We define a \textit{pseudo k-diagram} as an equivalence class of concrete pseudo $k$-diagrams with respect to isotopy equivalence, and denote the set of the pseudo $k$-diagrams by $T_k(\emptyset)$.
Moreover, in our analysis it is important to fix a concrete representation in which no edge doubles back on itself, which we call \textit{standard}. What this means is that in such representation any vertical line intersects any edge, except vertical ones and loops, at most once. In Figure~\ref{7box} a concrete pseudo 7-diagram and one of its standard representations are depicted. 
\smallskip

Let $D$ be a concrete pseudo $k$-diagram. Consider the set $\Omega=\{\bullet, \circ\}$ and the monoid $\Omega^*$. Our goal is to adorn the edges of $D$ with elements of $\Omega$ which we call \textit{decorations}. In particular, $\bullet$ is called a \textit{L-decoration} and $\circ$ is called a \textit{R-decoration}. 
A finite sequence of decorations  $\mathbf{b}=x_1x_2\cdots x_p$ of $\Omega^*$ is called a \textit{block of decorations} of width $p$. We may adorn an edge with a finite sequence of blocks of decorations $\mathbf{b}_0,\ldots, \mathbf{b}_r$. If $\mathbf{b}$ is a block of decorations on an edge \textit{e}, we read off the sequence of decorations left to right if $e$ is a non-propagating edge, up to down if $e$ is a propagating edge. If $e$ is a loop edge, then the reading of the sequence of decorations depends on an arbitrary choice of the starting point and direction round the loop. We consider two sequences of blocks equivalent if one can be changed to the other or its opposite by any cyclic permutation.

We assign to each decoration $x_i$ a \textit{vertical position} equal to the $y$-coordinate in the $xy$-plane. 

\begin{Definition}
 Let $D$ be a concrete pseudo $k$-diagram decorated with at least one decoration in $\Omega$ and such that all decorations have different vertical positions. A $L$-\textit{strip} (respectively $R$-\textit{strip}) is a part of the diagram delimited by two horizontal lines that contains at least one $L$-decoration (respectively $R$-decoration) and no $R$-decorations (respectively $L$-decorations). A \textit{strip partition} of $D$ is a partition into \textit{strips} in which $L$-strips and $R$-strips alternate. 
\end{Definition}

Now we list the rules by which the edges of $D$ can be decorated. 
\begin{enumerate}
\item[(D0)] If $\textbf{a}(D)=0$ then the edges of $D$ are undecorated.
\end{enumerate}
Now suppose $\mathbf{a}(D)\neq 0$.
\begin{enumerate}
\item[(D1)] All decorated edges can be deformed so as to take $L$-decorations to the left wall of the standard box and $R$-decorations to the right wall simultaneously without crossing any other edge.

\item[(D2)] If \textit{e} is a non-propagating or a loop edge, then we allow adjacent blocks on \textit{e} to be conjoined to form a single block.
\item[(D3)] If $\mathbf{a}(D) > 1$ and \textit{e} is propagating, then as in (D2), we allow adjacent blocks on \textit{e} to be
conjoined to form a unique block.
\item[(D4)] If $\mathbf{a}(D)=1$, then we require the following.
 		\begin{enumerate}
            \item All decorations have different vertical positions. 
            
			\item All decorations occurring on propagating and loop edges must have vertical position lower (respectively, higher) than the vertical positions of decorations occurring on the (unique) non-propagating edge in the north face (respectively, south face) of \textit{D}.

            \item   All blocks in the same edge inside the same strip may be conjoined to form a single block.
   
			\end{enumerate}
\end{enumerate}

\begin{Remark}
   To better understand the conditions on the blocks of decorations listed in (D2) and (D4)(c), see Figure \ref{fig:strip}. In diagrams (A) and (B), since the \textbf{a}-value is greater than 1, then the two $\bullet$ on the edge $\{3,1'\}$ form a single block. In (C) and (D), the two $\bullet$ are in the same strip, so they belong to the same block, while in (E) they form two distinct blocks. 
\end{Remark}

\begin{figure}
    \centering
    \includegraphics[scale = 0.6]{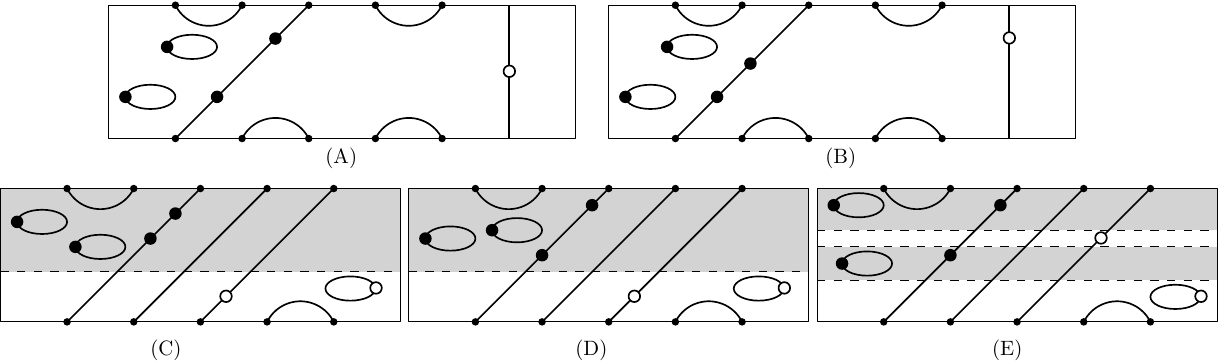}
    \caption{Diagrams (A) and (B) are $\Omega$-equivalent. Diagrams (C), (D) and (E) are isotopically equivalent but only (C) and (D) are $\Omega$-equivalent.}
    \label{fig:strip}
\end{figure}

We say that two concrete decorated pseudo \textit{k}-diagrams are $\Omega$-\textit{equivalent} if we can isotopically deform one diagram into the other in a way that any intermediate diagram is also a concrete pseudo \textit{k}-diagram decorated as above, and the relative vertical positions of the decorations in each block are preserved (i.e., the adjacency of the decorations is preserved). In addition, two diagrams with $\ab$-value equal to 1 are $\Omega$-\textit{equivalent} if furthermore, in any of their strip partitions, as well as in those of any intermediate diagram, the number of $L$ and $R$-strips and their relative positions are preserved; see Figure~\ref{fig:strip}.

\begin{Definition}
A $LR$-\textit{decorated pseudo k-diagram} is defined to be an equivalence class of $\Omega$-equivalent concrete decorated pseudo $k$-diagrams. We denote the set of $LR$-decorated pseudo $k$-diagrams by $\TLR$.
\end{Definition}

\begin{Definition}
We define $\PLR$ to be the free $\Zd$-module having the elements of $\TLR$ as a basis.
\end{Definition}

Now we define the product of two elements of $\TLR$ as the concatenation, and then extend this bilinearly to define a multiplication in $\PLR$. To concatenate two diagrams $D,D'\in \TLR$, we place $D'$ on top of $D$ so that node $i$ of $D$ coincides with node $i'$ of $D'$, conjoin adjacent blocks maintaining $\Omega$-equivalence and then rescale vertically by a factor of $1/2$. 

\begin{figure}[hbtp]
\centering
\includegraphics[scale=0.7]{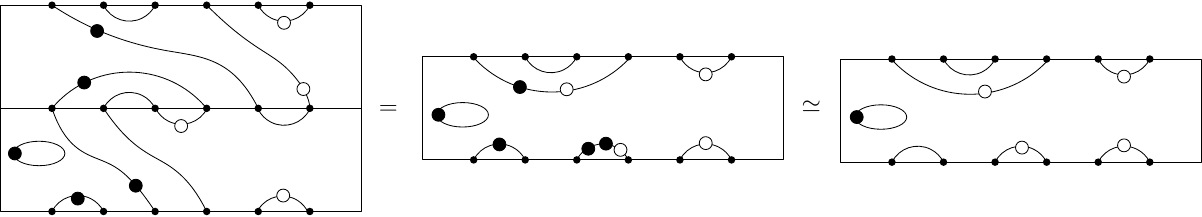}
\caption{Product in $\mathcal{P}^{LR}_6$ and reduction in $\widehat{\mathcal{P}}^{LR}_6$.}
\label{prodec}
\end{figure}

\begin{Proposition}\label{decoralgebra}
The module $\PLR$ with the aforementioned product is a well-defined associative $\Zd$-algebra, with identity element the pseudo $k$-diagram $I_k$ made of $k$ undecorated vertical edges. 
\end{Proposition}

\begin{proof}
By \cite[Proposition 3.3.5]{G1}, the multiplication by concatenation defined above preserves the condition (D1).
 Let $D$ and $D'$ be two diagrams with $\mathbf{a}(D)=\mathbf{a}(D')=1$. If $\mathbf{a}(D'D)>1$, then there are no concerns, since all the blocks on each edge can be conjoined. Else, if $\mathbf{a}(D'D)=1$, then (D4) holds. Indeed, if the unique non-propagating edge $e'$ in the south face of $D'$ is $\{i', (i+1)'\}$, then the unique non-propagating edge $e$ in the north face of $D$ is either (a) $\{i-1, i\}$, or (b) $\{i, i+1\}$, or (c) $\{i+1, i+2\}$ (for more details, see \cite[Proposition 3.2.7]{ErnstDiagramI}). If the lowest strip of $D'$ and the highest strip of $D$ are associated to the same decoration, then we conjoin the two strips and the blocks on the edges inside those strips (if any). Otherwise, $D'D$ contains, from the top, the strips of $D'$ followed by those of $D$ and there is no conjoining of blocks. In both cases, (D4) holds; see Figure \ref{diagrel}. By construction, the multiplication is associative (see also \cite[Proposition 3.1.2]{G1}) and this concludes the proof.
\end{proof}

Note that $\PLR$ is an infinite dimensional algebra since, for instance, when $D$ has a single propagating edge, there is no limit to the number of decorations on such edge, or when $D$ has no propagating edges, there is no limit to the number of loops with both decorations. 

We introduce the notation for three types of loop edges in Figure \ref{loop-D}.

 \begin{figure}[h]
	\centering
	\includegraphics[width=0.45\linewidth]{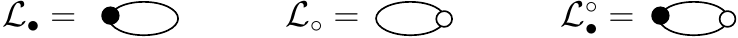}
	\caption{Special types of loops.}
	\label{loop-D}
 \end{figure}

\begin{figure}[hbtp]
\centering
\includegraphics[scale=0.6]{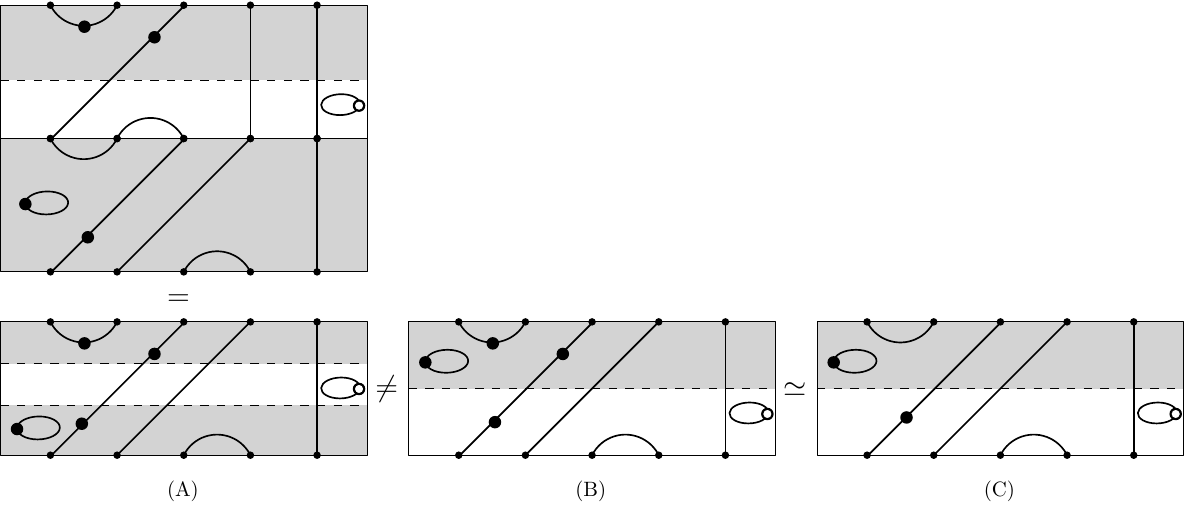}
\caption{Product in $\mathcal{P}^{LR}_5$ with $\ab$-value 1. Diagrams (A) and (C) are irreducible while (B) is not; (B) and (C) are equal in $\widehat{\mathcal{P}}^{LR}_5(\Omega)$.}
\label{diagrel}
\end{figure}

From now on we follow the setup of Bergman's Diamond Lemma \cite [\S 1]{Ber}. We define a \textit{reduction system} as depicted in Figure \ref{rel}, where the lines represent a portion of an edge (loops included). 

\begin{figure}[hbtp]
\centering
\includegraphics[scale=0.7]{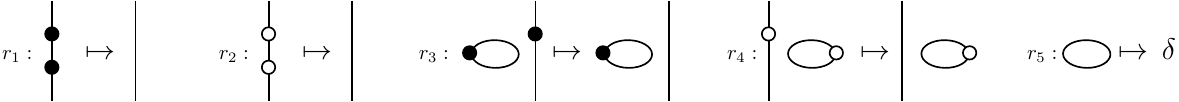}
\caption{The reduction system in $\PLR$.}
\label{rel}
\end{figure}

More precisely the reductions $r_i$ described in Figure \ref{rel} are defined as follows:
\begin{enumerate}   
    \item $r_1$ and $r_2$ reduce the number of decorations and they are applied to adjacent decorations in the same block;
     \item if $\lob$ (respectively, $\loc$) occurs in $D$, then $r_3$ (respectively, $r_4$) removes a $\bullet$ (respectively, $\circ$) on another edge (loop edge included) with the only restriction that if $\ab(D)=1$ then $\lob$ (respectively, $\loc$) and $\bullet$ (respectively, $\circ$)  must belong to the same strip. 
     \item $r_5$ removes an undecorated loop edge multiplying $D$ by a factor $\delta$.
\end{enumerate} 
Clearly, if $\ab(D)>1$, a finite sequence of applications of $r_3$ (respectively, $r_4$) removes all $\bullet$ (respectively, $\circ$) in $D$ except the one on the last remaining loop, while in the case $\ab(D)=1$, it only removes all $\bullet$ (respectively, $\circ$) that belong to the same $L$-strip (respectively, $R$-strip).
\smallskip

We say that a $LR$-decorated diagram is \textit{irreducible} if no relation can be applied. Note that each block of decorations of an irreducible diagram cannot contain two adjacent decorations of the same type.

Define the function $sh: \TLR \rightarrow T_k(\emptyset)$ such that $sh(D)$ is the \textit{shape} of  $D$, i.e. $sh(D)$ is an element of $T_k(\emptyset)$ which is equal to $D$ with all decorations and loops removed. Also define a function $h: \TLR \rightarrow \mathbb{N}$ where $h(D)$ is the sum of the number of decorations and the number of loops of $D$.
\medskip

Define $\leq_{\widehat{\mathcal{P}}}$ on $\TLR$ via 
$$D<_{\widehat{\mathcal{P}}} D'\mbox{ if and only if }sh(D)=sh(D')\mbox{ and }h(D)<h(D').$$ 
Let $D,D',A,B$ be elements of $\TLR$ with $D<_{\widehat{\mathcal{P}}}D'$. We can easily verify that $sh(ADB)=sh(AD'B)$ and $h(ADB)<h(AD'B)$, so that $ADB<_{\widehat{\mathcal{P}}}AD'B$. Therefore, $\leq_{\widehat{\mathcal{P}}}$ is a semigroup partial order on $\TLR$. Furthermore, we note that the reductions preserve the shape of the edges and decrease the number of decorations. Hence $\leq_{\widehat{\mathcal{P}}}$ is compatible with the set of reductions. Moreover, $\leq_{\widehat{\mathcal{P}}}$ clearly satisfies the descending chain condition, that is every sequence of reductions ends in a finite number of steps.\\

Now we want to show that all of the potential ambiguities are resolvable. Note that $r_5$ commutes with any other reduction. Moreover, if there are two reductions, one involving only $ \bullet$-decorations and one involving only $\circ$-decorations which can be applied at the same time, then they commute, so the corresponding ambiguity is resolvable.

 So we only need to check ambiguities involving the same type of decorations, but we know that all such ambiguities are resolvable since they coincide with those appearing in the Coxeter type finite $D$ \cite[Theorem 4.2]{Green-general}.

Let $\PLRk$ be the quotient of $\PLR$ modulo the two-sided ideal generated by the relations determined by the reductions in Figure \ref{rel}. Since all the ambiguities are resolvable, by \cite[Theorem 1.2]{Ber} we have the following result.

\begin{Proposition}
 The set of $LR$-decorated irreducible diagrams forms a basis for $\PLRk$.
\end{Proposition}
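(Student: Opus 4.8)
The plan is to establish the two defining properties of a basis separately: first that the irreducible diagrams span $\PLRk$ over $\Zd$, and then that their images in the quotient are $\Zd$-linearly independent. Throughout I regard the relations of Figure~\ref{rel} as a rewriting system on $\TLR$: each relation rewrites a short pattern of adjacent decorations lying on a single edge as a simpler one, possibly introducing a scalar factor of $\delta$. The goal is to show that this system is terminating and confluent, so that its normal forms are precisely the irreducible diagrams.

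For spanning I would attach to each $D\in\TLR$ a complexity statistic, for instance the total number of decorations carried by its edges (refined, if necessary, by a secondary lexicographic quantity counting adjacent equal pairs within blocks). Each relation in Figure~\ref{rel} strictly decreases this statistic: the idempotent-type relations merge two adjacent decorations into one, while the absorption-type relations delete decorations outright. Since the statistic takes values in $\mathbb{N}$, no infinite descending chain of reductions exists, so iterating the relations on any $D$ terminates at an irreducible diagram (possibly multiplied by a power of $\delta$). Hence every generator of $\PLR$ is congruent modulo the relations to a $\Zd$-multiple of an irreducible diagram, and the images of the irreducible diagrams span $\PLRk$.

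For linear independence I would invoke Newman's Lemma: a terminating rewriting system is confluent as soon as it is locally confluent, and a confluent terminating system assigns to each diagram a unique normal form. Confluence then yields a well-defined $\Zd$-linear normal-form map $N\colon\PLR\to\PLR$ whose image is the free $\Zd$-module on the irreducible diagrams and which annihilates every relation of Figure~\ref{rel}; consequently $N$ factors through an isomorphism between $\PLRk$ and that free module, restricting to the identity on irreducible diagrams. This is exactly the assertion that the irreducible diagrams form a $\Zd$-basis. The substantive work is thus the verification of local confluence, i.e.\ the analysis of critical pairs. Here I would exploit the locality encoded in the decoration rules (D1)--(D4): since L-decorations are pushed to the left wall and R-decorations to the right wall along disjoint edges, decorations on distinct edges never interact, so every overlap lives among the decorations of a single edge and reduces to checking the finitely many overlaps in the free-monoid presentation on $\Omega=\{\bullet,\circ,\tb\}$.

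I expect the genuine obstacle to be relation (r3), which, unlike the others, is not purely local: it may be applied to two L-decorations only when no R-decoration occupies the intervening height range. One must check that whenever (r3) overlaps another reduction, or two competing applications of (r3) contend for the same decorations, the height-range side-condition remains satisfied after either choice, so that the two resulting diagrams can still be joined to a common normal form. This requires careful bookkeeping of the vertical positions governed by (D4) in the critical-pair diagrams, and it is the sole point where the planar, non-local nature of the diagrams — rather than the bare monoid $\Omega^*$ — actually intervenes. Once these overlaps are resolved, the conclusion follows exactly as in~\cite[Proposition 3.4.1]{ErnstDiagramI}.
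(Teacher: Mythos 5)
Your normal-form plan is essentially the argument the paper intends: the paper offers no proof beyond citing \cite[Proposition 3.4.1]{ErnstDiagramI}, whose proof is exactly this kind of termination-plus-unique-normal-form (diamond lemma) argument, with your decoration-count statistic handling termination since every relation in Figure~\ref{rel} removes or merges decorations (or deletes a loop at the cost of a scalar). You also correctly isolate the one point where this paper's setting genuinely differs from Ernst's type $\tC$ --- the critical pairs involving the non-local relation (r3), whose height-range side condition must be checked to survive competing reductions --- though you leave that verification as a stated plan rather than carrying it out, which is the same level of detail the paper itself provides.
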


Now define the \textit{simple diagrams} $D_0, \ldots, D_{n+2}$ as in Figure \ref{simple}. Since they are irreducible, they are basis elements of $\PLRn$.  

\begin{figure}[hbtp]
\centering
\includegraphics[scale=0.6]{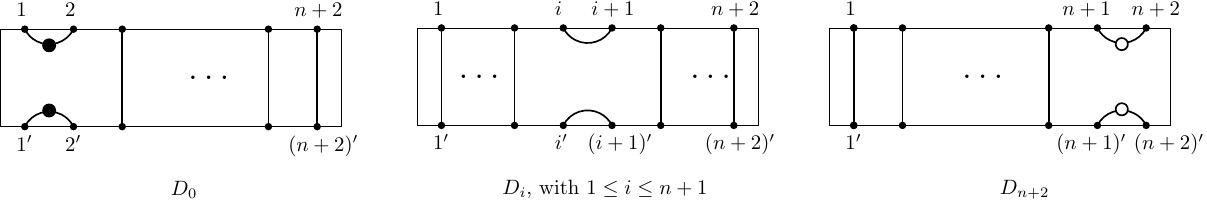}
\caption{The simple diagrams.}
\label{simple}
\end{figure}

It is easy to prove that the simple diagrams satisfy the relations (d1)-(d3)  listed in Definition~\ref{def:tl-algebras}, where $b_i$'s are replaced by $D_i$'s.

\begin{Definition}
Let $\DD$ be the $\Zd$-subalgebra of $\PLRn$ generated as a unital algebra by the simple diagrams with multiplication inherited by $\PLRn$.
\end{Definition}

In the next section we give an explicit description of the diagrams belonging to this subalgebra. 

\section{Admissible diagrams of type $\widetilde{D}$}\label{sec:admissible}

In this section we introduce a family of diagrams, called \textit{admissible} and we consider the $\Zd$-module they generate inside $\PLRn$. The goal of the next sections will be to show that such a module and $\DD$ coincide as subalgebras of $\PLRn$.

\begin{Definition}\label{def:admissible}

An irreducible diagram $D\in T_{n+2}^{LR}(\Omega)$ is called a $\widetilde{D}$-\textit{admissible} diagram if it satisfies the following conditions:

\begin{itemize}

\item[(A1)] The only possible loops in $D$ are those depicted in Figure~\ref{loop-D}. If a $\locb$ occurs, the other two types of loops cannot appear. Moreover, if $\ab(D)>1$ then there is at most one occurrence of $\lob$ and at most one occurrence of $\loc$.
\end{itemize}

Assume $\ab(D)>1$.
\begin{itemize}
\item[(A2)] Both the sum of the number of $\locb$ and $\bullet$ on non-loop edges, and the sum of the number of $\locb$ and $\circ$ on non-loop edges, must be even.
\smallskip
\item[(A3)] If \textit{e} is the vertical edge $\{1,1'\}$ (respectively $\{n+2, (n+2)'\}$), then exactly one of the following three conditions is met:
\begin{itemize}
\item[(a)] \textit{e} is undecorated;
\item[(b)] \textit{e} is decorated by a single $\circ$ (respectively $\bullet$) and no $\loc$ (respectively $\lob$) occurs;
\item[(c)] \textit{e} is decorated by an alternating sequence of $\bullet$ and $\circ$ and there are no loops.
\end{itemize}
\smallskip
\end{itemize}

Assume $\ab(D)=1$.

\begin{itemize}
\item[(A4)] The western side of $D$ is equal to one of those depicted in Figure  \ref{west}, where the number of $\lob$ might be 0 and $\du, \dd \in \{\emptyset, \bullet\}$. 
In each $\bullet$-strip, there is a unique $\lob$ and no other decorations, except when $\du$ or $\dd$ of Figures \ref{west} (B)-(C)-(D) are not empty. In these cases, the highest (respectively, lowest) strip of the diagram contains two $\bullet$ and no $\lob$. 
We have similar restrictions for the eastern side of $D$, where the $\bullet$ are replaced by the $\circ$ and the $\lob$ with the $\loc$ (see Remark \ref{rem:flip} for an analogous description).

\begin{figure}[hbtp]
	\centering		
		\includegraphics[height=27mm, keepaspectratio]{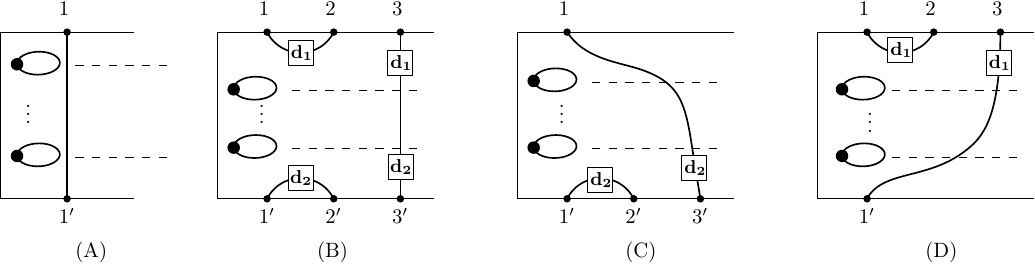}
		\caption{Western side of an admissible diagram with $\ab(D)=1$. The number of $\lob$ in each diagram can be zero.}
	\label{west}
\end{figure}

\end{itemize}

\end{Definition}

\begin{Remark}\label{alternatingdec} \
\begin{itemize}
    \item[(a)] An edge $e$ of an admissible diagram $D$ can be decorated by both $\bullet$ and $\circ$ only if $D$ has no propagating edges or if $e$ is the unique propagating edge of $D$. 
    \item[(b)] If $\ab(D)=1$, then the $\bullet$-strips and the $\circ$-strips must alternate. 
    \item[(c)] In a standard representation, all the loops $\lob$ (respectively $\loc$) will be depicted vertically aligned on the left (respectively right) of the leftmost (respectively rightmost) propagating edge of the diagram (Figure~\ref{west}). When $D$ has no propagating edges, the $\locb$ will be drawn horizontally aligned in the middle of the picture (Figure~\ref{ALTexample1}).
    \item[(d)] The diagram in Figure \ref{diagrel}(C) is an example of an irreducible diagram that is not admissible, since it does not verify (A2).
\end{itemize}

\end{Remark} 

\begin{Definition}
We denote by $\dbD$  the set of all $\widetilde{D}$-admissible $(n+2)$-diagrams and by $\mathcal{M}[\dbD]$ the $\Zd$-submodule of $\PLRn$ they span.
\end{Definition}

Since the admissible diagrams $\dbD$ are irreducible, they are independent.

\begin{Proposition}\label{prop:basis}
    The set of admissible diagrams $\dbD$ is a basis for the module $\mathcal{M}[\dbD]$.
\end{Proposition}

\smallskip

We now classify the admissible diagrams in three main families.

\begin{Notation}\label{not:l-r}
We denote by $\mathtt{l}$ \textit{the total number of $\lob$} and by $\mathtt{r}$ \textit{the total number of $\loc$} occurring in an admissible diagram. 
\end{Notation}

\begin{Definition}[PZZ-diagrams]\label{defPZZ}
Let $D\in \dbD$. We say that $D$ is a \textit{PZZ-diagram} if $\ab(D)=1$ and $\mathtt{l},\mathtt{r}\geq 1$. 
In particular, we say that $D$ is of \textit{type} $\langle a, b \rangle$ where $a,b \in \{\bullet, \circ\}$ if the highest loop in $D$ is $\mathcal{L}_a$ and the lowest loop in $D$ is $\mathcal{L}_b$.
\end{Definition}

By (A4) and Remark \ref{alternatingdec}(b) in a PZZ-diagram we have that $|\mathtt{l}-\mathtt{r}|\leq 1$.

\begin{figure}[hbtp]
\centering
\includegraphics[height=19mm, keepaspectratio]{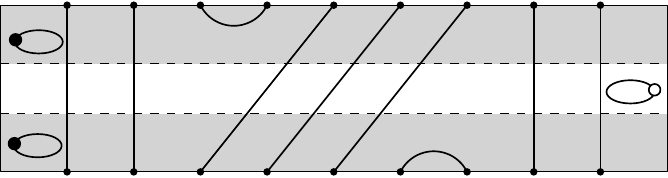}
\caption{A PZZ-diagram of type $\langle\bullet \, \bullet\rangle$ with $\mathtt{l}=2$ and $\mathtt{r}=1$.}
\label{PZZdiag}
\end{figure}

The two specific PZZ-diagrams depicted in Figure \ref{degeneratePZZ} will be used later. Note that $Z_1$ is of type $\langle \bullet \, \circ \rangle$, $Z_2$ is of type $\langle \circ \, \bullet \rangle$ and they both have $\mathtt{l}=\mathtt{r}=1$.

\begin{figure}[hbtp]
\centering
\includegraphics[height=22mm, keepaspectratio]{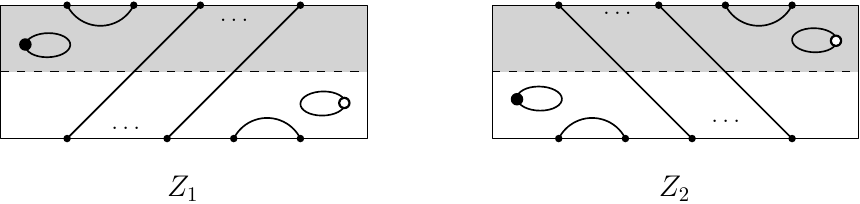}
\caption{Special PZZ-diagrams.}
\label{degeneratePZZ}
\end{figure}

\begin{Definition}[P-diagrams]\label{defP}
Let $D\in \dbD$ that is not a PZZ-diagram. We say that:
\begin{itemize}
\item[(LP)] $D$ is of \textit{type} LP if it has a single $\lob$ and there exists $j_\ell\in \{2,\ldots, n+1\}$ such that the leftmost $j_{\ell}-1>0$ edges  are vertical and undecorated, and the one leaving node $j_{\ell}$ is not an undecorated vertical edge.

\item[(RP)] $D$ is of \textit{type} RP if it has a single $\loc$ and there exists $j_r\in \{2,\ldots, n+1\}$ the rightmost $n+2-j_r>0$ edges are vertical and undecorated, and the one leaving node $j_r$ is not an undecorated vertical edge.
\item[(LRP)] $D$ is of \textit{type} LRP if it is of both types LP, RP and if $\ab(D)>1$. 
The condition $\ab(D)>1$ implies that $j_r - j_{\ell}>2$.
\end{itemize}
If $D$ is of type LP, RP or LRP then we say that $D$ is a \textit{P-diagram}.
\end{Definition}

\begin{figure}[hbtp]
\centering
\includegraphics[height=22mm, keepaspectratio]{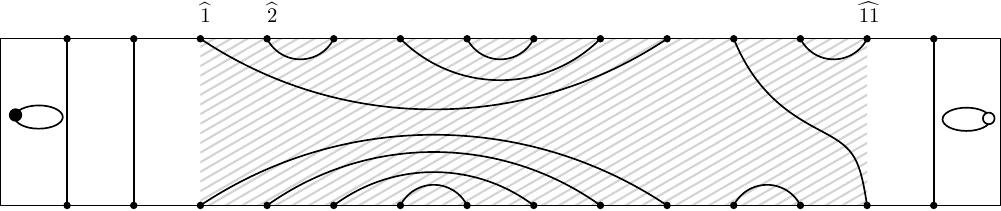}
\caption{A P-diagram of type LRP, with $j_{\ell}=3$,  $j_r=13$ and $n=12$. The shaded part denotes its inner alternating diagram, as defined in Definition \ref{def:altportion}.}
\label{LRPdiag}
\end{figure}

\begin{Definition}[ALT-diagrams]\label{defALT}
Let $D\in \dbD$. If $D$ is not a P-diagram or a PZZ-diagram, then we say that $D$ is an \textit{ALT-diagram}. In particular, the identity diagram is an ALT-diagram.
\end{Definition}

\begin{figure}[hbtp]
\centering
\includegraphics[height=17mm, keepaspectratio]{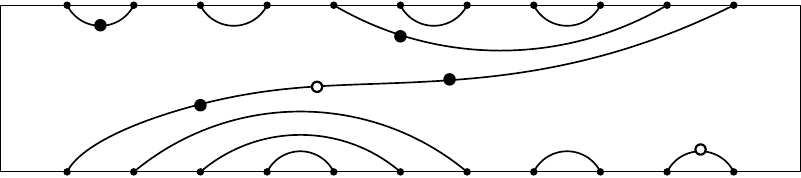}
\caption{An ALT-diagram with $n=9$.}
\label{ALTdiag}
\end{figure}

The conditions required for the ALT, P and PZZ diagrams are consistent with (A1)--(A4); moreover, the three families form a partition of the set of admissible diagrams.

\begin{Definition}\label{def:altportion}
Given a P-diagram $D$, we define the \textit{inner alternating diagram}, denoted by $\widehat{D}$, as the diagram embedded in the standard $(j_r-j_{\ell}+1)$-box and delimited by the vertical edges $\{j_{\ell}-1, (j_{\ell}-1)'\}$ and $\{j_r+1, (j_r+1)'\}$ of $D$. We relabel the nodes of $\widehat{D}$ by $\hat{1}, \hat{2}, \hat{3}, \ldots$, where $\hat{k}=k+j_{\ell}-1$. If $D$ is of type LP (respectively, RP) we set $j_r=n+2$ (respectively, $j_{\ell}=1$). 
\end{Definition}
Note that $\widehat{D}$ is an ALT-diagram. For example, in Figure~\ref{LRPdiag}, $\widehat{D}$ is represented by the shaded part.
\smallskip

For our aims, it is useful to introduce the following definition. The notation $a^*$ means that $a^*$ might be either a node $a$ in the north face or a node $a'$ in the south face of $D$.

\begin{Definition}\label{def:flipped}
Let $D\in \dbD$. The \textit{conjugate diagram} $F(D)$ is the diagram having the set of edges equal to $\{F(e) \mid e \mbox{ is an edge of } D\}$, where $F(e)$ is defined as follows.\\
If $e$ is a loop, then 
\begin{itemize}
    \item[(f1)] $F(\lob)=\loc$, $F(\loc)=\lob$ and $F(\locb)=\locb$, with the same vertical positions.
\end{itemize}
Otherwise,
\begin{itemize}
    \item[(f2)] if $e=\{i^*,j^*\}$ is a non-propagating edge, then $F(e)=\{(n+3-j)^*, (n+3-i)^*\}$;
    \item[(f3)] if $e=\{i, j'\}$ is a propagating edge, then $F(e)=\{n+3-i, (n+3-j)'\}$;
    \item[(f4)] if $e$ is a non-propagating edge decorated with $\bullet \, \circ$, then $F(e)$ is decorated with $\bullet \, \circ$. Otherwise, if $e$ has a $\bullet$ (respectively $\circ$) then $F(e)$ has a $\circ$ (respectively $\bullet$) in the same vertical position.        
\end{itemize}
\end{Definition}

\begin{Remark}\label{rem:flip} 
One can note that this operation is left-right reversal composed with exchanging the two types of decoration. In particular, $F(D)\in \dbD$, $F(F(D))=D$ and $F(DD')=F(D)F(D')$ for $D, D'\in \dbD$. 
\end{Remark}
For example $F(Z_1)=Z_2$ and $F(Z_2)=Z_1$, see Figure~\ref{degeneratePZZ}. Moreover, in Definition~\ref{def:admissible}(A4) the eastern side of $D$ can be obtained by applying $F$ to the diagrams depicted in Figure~\ref{west}. Note that $F(D)$ is an ALT (respectively, P, PZZ) diagram if and only if $D$ is an ALT (respectively, P, PZZ) diagram. In particular, a RP-diagram is the conjugate of a LP-diagram, and vice versa, while the conjugate of a LRP-diagram is a LRP-diagram.

\subsection{Length of an admissible diagram.}\label{subsec:length}
In this section we introduce a length function that will be a fundamental tool in the proof of the injectivity of our representation. In the diagram algebras of types $A$ and $\widetilde{A}$ the length of a diagram is defined by counting the number of intersections between all the vertical lines $i+1/2$ in the standard box and the edges of the diagram (see \cite[Lemma 3.3]{Green-general}, \cite[Definition 4.4.2]{FanGreen_Affine}). To define an analogous length function in type $\widetilde{D}$, for instance in the case of an ALT-diagram, one may count the intersections between such vertical lines and the edges of an isotopically equivalent diagram obtained by stretching the decorated edges to touch the left or the right wall of the box without crossing each other, as sketched in Figure~\ref{stretch}. For the P and PZZ-diagrams the geometric interpretation is more subtle. 
\begin{figure}[hbtp]
\centering
\includegraphics[height=21mm, keepaspectratio]{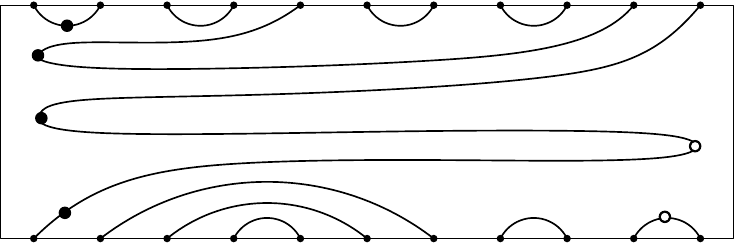}
\caption{Stretched representation of the diagram in Figure~\ref{ALTdiag}. The length of this diagram is 29.}
\label{stretch}
\end{figure}
The algebraic formulation of the length function that arises from this approach is the following. 

From now on, we consider the standard representation of a diagram (see Section~\ref{sec:undecorated}) and we assign to any edge $e$ of $D$ a \textit{weight} $\weight(e)$ whose value depends only on the decorations $\bullet$ and $\circ$. We set $\weight(\lob)=\weight(\loc)=1$ and $\weight(\locb)=n+1$. If $e$ is undecorated, then $\weight(e)=0$. For $k\in\mathbb{N}_0$,  set $(\circ\,\bullet)^k:=\underbrace{\circ\bullet \cdots \circ\bullet}_{2k}$ and $(\bullet\,\circ)^k:=\underbrace{\bullet\circ \cdots \bullet\circ}_{2k}$.

If $i< j$ and $e=\{i,j\}$, $e=\{i', j'\}$ or $e=\{i,j'\}$, then we set

\begin{align}\label{weight1}
\weight(e):=\begin{cases}
i-1 +k(n+1), & \mbox{ if }e\mbox{ is decorated with} \ \bullet\mbox{ followed by } (\circ\,\bullet)^k;  \\
n+2-j + k(n+1), & \mbox{ if }e\mbox{ is decorated with} \ \circ\mbox{ followed by }(\bullet\,\circ)^k;  \\
(k+1)(n+1)-j+i, & \mbox{ if }e\mbox{ is decorated with} \ (\bullet\,\circ)^{k+1}; \\
(k+1)(n+1),& \mbox{ if }e\mbox { is decorated with} \ (\circ\,\bullet)^{k+1}.
\end{cases}
\end{align}

If $i\geq j$ and $e=\{i, j'\}$, 
\begin{align}\label{weight2}
\weight(e):=\begin{cases}
j-1 +k(n+1), & \mbox{ if }e\mbox{ is decorated with} \ \bullet\mbox{ followed by } (\circ\,\bullet)^k; \\
n+2-i + k(n+1), & \mbox{ if }e\mbox{ is decorated with} \ \circ\mbox{ followed by }(\bullet\,\circ)^k; \\
(k+1)(n+1), & \mbox{ if }e\mbox{ is decorated with} \ (\bullet\,\circ)^{k+1}; \\
(k+1)(n+1)-i+j, & \mbox{ if }e\mbox{ is decorated with} \ (\circ\,\bullet)^{k+1}.
\end{cases}
\end{align}

\begin{Remark}\label{rem:endpoints}
    Note that, by definition, the weight of an edge decorated with only a $\bullet$ (respectively, $\circ$) depends only on its leftmost (respectively, rightmost) endpoint. In particular, if the edge $e=\{1^*,j^*\}$ (respectively, $e=\{i^*, (n+2)^*\}$) is decorated only with a $\bullet$ (respectively, $\circ$), then $\weight(e)=0$.
\end{Remark}

\begin{figure}[hbtp]
\centering
\includegraphics[scale=0.7]{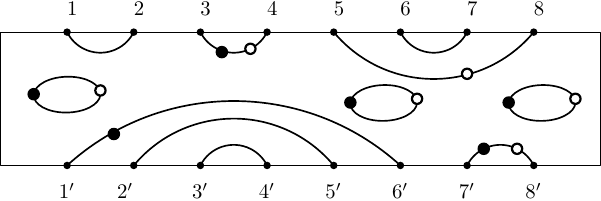}
\caption{The weights of the non-loop edges are all 0 except $\weight(3,4)=6$ and $\weight(7',8')=6$. Moreover, $\weight(\locb)=7$ and $\ell(D)=41$.}
\label{ALTexample1}
\end{figure}

\begin{Notation}\label{not:pesi}
We set $\nu_i(D)$ \textit{the number of non-loop edges of $D$ that intersect the vertical lines} $i+1/2$, with $i\in\lbrace1,\ldots,n+1\rbrace$. Note that $\nu_i(D)$ is well-defined since we work with a standard representation of $D$ and it is always an even number. Also define 
$$\nu(D):=\sum_{i=1}^{n+1}\nu_i(D) \qquad \mbox{ and } \qquad \weight(D):=\sum \weight(e),$$ 
where the second sum runs over all the edges of $D$, loops included.
\end{Notation}

\begin{Definition}[Length of a $\widetilde{D}$-admissible diagram] \label{deflength}
We define the \textit{length} $\ell(D)$ of an admissible diagram $D \in \dbD$ as follows.
\begin{itemize}
\item[(a)] If $D$ is an ALT-diagram, we define 
\begin{equation}\label{eq:length}
    \ell(D):= \frac{1}{2}\nu(D) + \weight(D).
\end{equation}
In particular, if $D$ is the identity diagram then $\ell(D)=0$.
\item[(b)] If $D$ is a PZZ-diagram, we set 
\begin{align*}
\ell(D):=\begin{cases} 3-i+n-j + (\mathtt{l}+\mathtt{r})(n+1), &\mbox{ if $D$ is of type $\langle \circ \, \circ \rangle$};\\
1-i+j+(\mathtt{l}+\mathtt{r})(n+1), &\mbox{ if $D$ is of type $\langle \circ \, \bullet \rangle$};\\
1+i-j+(\mathtt{l}+\mathtt{r})(n+1), &\mbox{ if $D$ is of type $\langle \bullet \, \circ \rangle$};\\
i-n + j-1+(\mathtt{l}+\mathtt{r})(n+1), &\mbox{ if $D$ is of type $\langle \bullet \, \bullet \rangle$},
\end{cases}
\end{align*}
where $\{i, i+1\}$ is the unique non-propagating edge on the north face and $\{j', (j+1)'\}$ is the unique non-propagating edge on the south face of $D$, while $\mathtt{l}$, $\mathtt{r}$, and $\langle \_  \, \_ \rangle$ are as in Notation~\ref{not:l-r} and Definition~\ref{defPZZ}.

\item[(c)] If $D$ is a P-diagram, we set
\begin{align*}
\ell(D):=\begin{cases} 2j_\ell-1 + \ell(\widehat{D}), &\mbox{ if }D\mbox{ is of type LP};\\
2n-2j_r +5 + \ell(\widehat{D}), &\mbox{ if }D\mbox{ is of type RP};\\
2n + 2j_\ell-2j_r + 4 +\ell(\widehat{D}), &\mbox{ if }D\mbox{ is of type LRP},\\
\end{cases}
\end{align*}
where $j_\ell$ and $j_r$ are as in Definition~\ref{defP}, $\widehat{D}$ is the inner alternating diagram, and $\ell(\widehat{D})$ is defined in (\ref{eq:length}).
\end{itemize}
\end{Definition}

\noindent Note that $\ell(D)=0$ if and only if $D$ is the identity diagram. In Figures~\ref{ALTexample1} and \ref{PZZexample1} the length is computed in some examples.

\begin{figure}[hbtp]
\centering
\includegraphics[height=16mm, keepaspectratio]{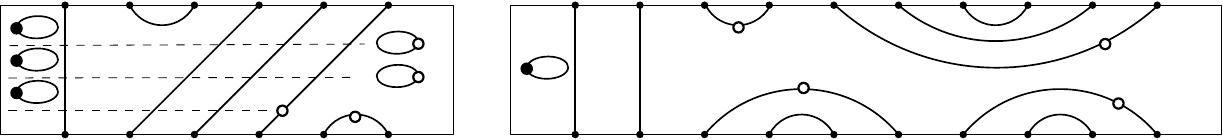}
\caption{A PZZ-diagram of type $\langle\bullet \,  \,\bullet\rangle$  on the left, with $\mathtt{l}=3$, $\mathtt{r}=2$ and $\ell(D)=27$ and a LP-diagram, right, with $j_\ell=3$ and $\ell(D)=24$.}
\label{PZZexample1}
\end{figure}

\begin{Lemma}\label{rem:length-conj}
    Let $D\in \dbD$, then $\ell(F(D))=\ell(D)$.
\end{Lemma}
    
  \begin{proof} 
    
    By Definition \ref{def:flipped}, $$F(e)=\begin{cases}
    \{n+3-j, n+3-i\}, \text{ if $e=\{i^*,j^*\}$ is non-propagating;}\\
\{n+3-i, (n+3-j)'\}, \text{ if $e=\{i,j'\}$ is propagating.}
    \end{cases}$$ 
    Moreover, $F(e)$ is decorated following (f4),
    so, by Equations \eqref{weight1} and \eqref{weight2}, $\weight(F(e))=\weight(e)$. Without considering the decorations, the shape of $F(D)$ is obtained by the shape of $D$ by reflecting through the central vertical axis of $D$. Hence, the number of intersections with the vertical lines $i+1/2$ is preserved. These two facts imply that if $D$ is an ALT-diagram, $\ell(F(D))=\ell(D)$.

    If $D$ is a PZZ-diagram, by Definition \ref{def:flipped}, the conjugate of a PZZ-diagram of type $\langle \bullet,  \bullet\rangle$ is a PZZ-diagram of type $\langle \circ,  \circ\rangle$, while the conjugate of a PZZ-diagram of type $\langle \bullet,  \circ\rangle$ is of type $\langle \circ,  \bullet\rangle$. Hence, when applying the Definition \ref{deflength}(b), observing that $F$ preserves the quantity $\mathtt{l}+\mathtt{r}$, it follows that $\ell(F(D))=\ell(D)$. 
    
    Finally, if $D$ is a P-diagram, since $F$ exchanges the values $j_\ell$ and $j_r$, and $\widehat{D}$ is an ALT-diagram, then by a quick computation, $\ell(F(D))=\ell(D)$.
\end{proof}

\section{The simple edges and the cut and paste operation}\label{sec:simple edges-cp}

\begin{Definition}\label{def:simpleedge}
A non-propagating edge on the north face $e=\{i, i+1\}$ of $D\in \dbD$ is called a \textit{simple edge of the form}:
\begin{enumerate}
\item[$(\smile)$] if $e$ is undecorated;
\item[$(\stackrel{\bullet}{\smile})$] if $e=\{1,2\}$ and it is decorated by a single $\bullet$;
\item[$(\stackrel{\circ}{\smile})$] if $e=\{n+1, n+2\}$ and it is decorated by a single $\circ$.
\end{enumerate}
\end{Definition}

\begin{Definition}\label{deftypes}
Let $e=\{i,i+1\}$ be a simple edge  in $D\in \dbD$ of any of the three forms above. We say that $e$ is of \textit{type}:

\begin{itemize}
\item[($U$)] $U_R$ (respectively, $U_L$) if $D$ contains the edge $f=\{i+2, k\}$ (respectively $\{j,i-1\}$) decorated with at least one $\bullet$ (respectively $\circ$);
\smallskip
\item [($N$)] if $D$ contains an edge $f=\{j,k\}$ with $j<i<i+1<k$;
\smallskip
\item [($P$)] $P_R$ (respectively $P_L$) if $D$ contains the edge $f=\{i+2, j'\}$ (respectively $\{i-1, k'\}$) with $j\leq i$ (respectively $k\geq i+1$);
\smallskip
\item[($S$)] $S_R$ (respectively $S_L$) if $D$ contains the edge $f=\{i+2, (i+2)'\}$ (respectively $\{i-1, (i-1)'\}$) decorated with at least one $\bullet$ (respectively $\circ$).
\end{itemize}
Moreover we say that the simple edge $e=\{1,2\}$ (respectively $e=\{n+1, n+2\}$) is of \textit{type}:
\begin{itemize}
\item[($P^*$)] $P_R^\bullet$ (respectively $P_L^\circ$) if $e$ is of type $P_R$ (respectively, $P_L$), $D$ contains a $\lob$ (respectively a $\loc$) and the edge $\{3, 1'\}$ (respectively $\{n, (n+2)'\}$) is undecorated.
\end{itemize}
The various situations are illustrated in Figure~\ref{types}.
\end{Definition}

\begin{figure}[hbtp]
\centering
\includegraphics[scale=0.7]{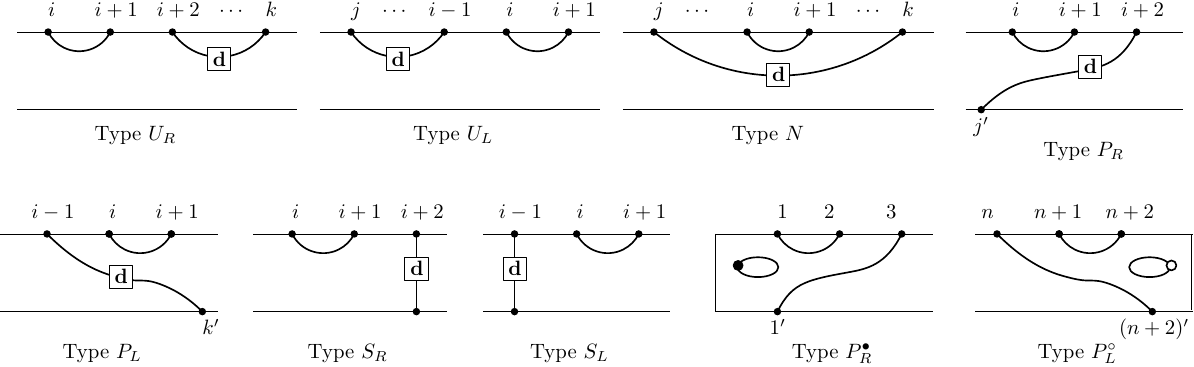}
\caption{Types of simple edges and their neighbors. In cases $N$ and $P$, the block $\bf d$ can be empty.}
\label{types}
\end{figure}

\begin{Definition}\label{def:basic-diag}
An ALT-diagram $D$ is a \textit{top basic diagram} if it consists of simple edges and a finite number (possibly zero) of undecorated vertical edges and loop edges (see Figure~\ref{basic}). 
\end{Definition}

\begin{figure}[hbtp]
\centering
\includegraphics[scale=0.6]{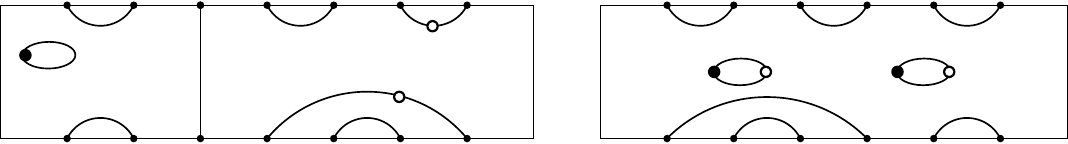}
\caption{Top basic diagrams. The non-propagating edges on the south face can be decorated.}
\label{basic}
\end{figure}

\begin{Proposition}\label{dimsimple}
If $D\in \dbD$ is an ALT-diagram different from the identity diagram $I$, then one of the following occurs:
\begin{enumerate}
\item $D$ is a top basic diagram;
\item there exists a simple edge in $D$ of type $U, N, P, S$ or $P^*$.
\end{enumerate}
\end{Proposition}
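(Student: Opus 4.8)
The plan is to argue by a structural analysis of the edges of $D$, combining planarity of the diagram with the admissibility conditions (A1)--(A5). First I would dispose of a trivial reduction: since $D\neq I$ and $D$ is an ALT-diagram, one has $\ab(D)\geq 1$. Indeed, if $\ab(D)=0$ then $D$ has no non-propagating edge on either face, so all of its edges are propagating; by (D0) they carry no decoration, and a planar, crossing-free matching of the $n+2$ north nodes to the $n+2$ south nodes with no decorations is forced to be the identity, contradicting $D\neq I$. Hence there is at least one non-propagating edge on the north face.

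Next I would record the geometric fact that an \emph{innermost} non-propagating edge on the north face is always of the form $\{i,i+1\}$: if $\{i,j\}$ with $j>i+1$ were such an edge, planarity would force the enclosed nodes $i+1,\dots,j-1$ to be matched among themselves by further non-propagating edges, so $\{i,j\}$ would not be innermost. I would then show that every innermost edge $e=\{i,i+1\}$ is in fact a \emph{simple edge} in the sense of Definition~\ref{def:simpleedge}. This is where the decoration rules enter: by Remark~\ref{alternatingdec}(a) a non-propagating edge of a dammed diagram carries only $\bullet$'s or only $R$-decorations, irreducibility forbids two adjacent decorations of the same type, and condition (D1) forces a $\bullet$ (resp.\ a $\circ$) living on $e$ to reach the left (resp.\ right) wall without crossing an edge; since $e$ is innermost this is possible only when $e=\{1,2\}$ (resp.\ $e=\{n+1,n+2\}$). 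A $\tb$ on $e$ is excluded because an $R$-decoration reaching the right wall through node $n+2$ must be a $\circ$ by (A3)--(A4$'$). Thus $e$ is undecorated, or $\{1,2\}$ with a single $\bullet$, or $\{n+1,n+2\}$ with a single $\circ$, i.e.\ a simple edge.

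With these preliminaries the proposition follows from the contrapositive: assuming that no simple edge of $D$ is of type $U,N,P,S$ or $P_R^{\bullet}$, I would show $D$ is basic. The absence of type $N$ first rules out any nesting of non-propagating north edges (an innermost simple edge enclosed by an arc would be of type $N$), so every such edge is an innermost simple edge. It then remains to see that all propagating edges are undecorated vertical edges. Here I would use planarity in the form: a slanted propagating edge $\{a,b'\}$ with $a\neq b$, or a vertical edge decorated with $\bullet$ or $\tb$, necessarily has a simple edge immediately adjacent on the side toward which it slants or toward which its decoration is pushed; choosing an extremal such edge produces a simple edge whose neighbour $f$ is exactly of type $P_R$ or $P_L$ (slanted case) or $S_R$ or $S_L$ (decorated vertical case), against our assumption. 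The analogous analysis of a north arc carrying a $\bullet$ yields type $U_R$ (and its mirror gives $U_L$), and the degenerate configuration with $\ab(D)=1$ arising from the western side Figure~\ref{west}(B)---a single $\lob$ together with the undecorated edge $\{3,1'\}$---is precisely accounted for by type $P_R^{\bullet}$. Having excluded all non-vertical and all decorated propagating edges, $D$ consists only of simple edges, undecorated vertical edges, south non-propagating edges and loops, i.e.\ $D$ is basic.

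I expect the main obstacle to be the middle step, namely proving in full rigour that every slanted propagating edge and every decorated vertical edge is forced, by planarity and admissibility, to sit immediately next to a simple edge of the prescribed type. This requires a careful extremal choice together with a case split according to the direction of the slant, the $L$/$R$ nature of the decoration, and the boundary positions $i\in\{1,n+1\}$ where the special forms $\stackrel{\bullet}{\smile}$, $\stackrel{\circ}{\smile}$ and the type $P_R^{\bullet}$ occur. By contrast, the reduction to innermost simple edges and the treatment of nesting (type $N$) should be routine.
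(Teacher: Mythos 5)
Your contrapositive strategy has the same broad shape as the paper's argument (find a simple edge, and show that any non-basic feature of $D$ forces a simple edge of one of the listed types), and your opening reduction ($\ab(D)\geq 1$, innermost arcs join consecutive nodes, nested arcs yield type $N$) is sound. But the proof hinges on a lemma that is false: the claim that every innermost non-propagating edge $\{i,i+1\}$ on the north face is simple. Condition (D1) does not require a decoration to have a clear path along the boundary of the box; decorations are carried to the walls through the \emph{interior}, passing under other non-propagating arcs, so the genuine obstructions to wall access are propagating edges and enclosing arcs, not innermostness. Concretely, the neighbor $f=\{i+2,k\}$ in type $U_R$ of Definition~\ref{deftypes} is precisely an un-nested arc decorated with a $\bullet$ that is not of the form $(\stackrel{\bullet}{\smile})$, and in the paper's own Figure~\ref{ALTexample1} the innermost arc $\{3,4\}$ carries $\bullet\tb$ (this is what gives it weight $6$). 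Your write-up is even internally inconsistent on this point: if your lemma held, types $U_R$ and $U_L$ would be vacuous, yet you later invoke them for arcs carrying $\bullet$. Consequently the step ``absence of type $N$ rules out nesting, so every north arc is an innermost simple edge'' does not go through; decorated un-nested arcs must be handled by a separate extremal argument (for instance: to the left of the leftmost $\bullet$-decorated arc every arc is an undecorated adjacent pair, since an $R$-decorated edge to the left of a $\bullet$-decorated edge would force the two wall excursions to cross, violating (D1)), and no such argument appears in your proposal.

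The second genuine gap is the step you yourself flag as the main obstacle. The claim that a slanted propagating edge ``necessarily has a simple edge immediately adjacent on the side toward which it slants'' is false as stated: two parallel slanted edges such as $\{2,4'\}$ and $\{3,5'\}$ leave the inner one with no adjacent simple edge at all, and even for an extremal slanted edge the adjacent north nodes may form an arc that is itself decorated, so the simple edge one eventually finds may be of type $U_L$ or $S_L$ rather than $P$. This is exactly why the paper's proof does not try to produce an adjacent simple edge of a prescribed type: starting from an arbitrary simple edge $e=\{i,i+1\}$, it shows that a slanted edge $\{i+2,m'\}$ must have $m>i+2$ (the case $m<i+1$ being types $P_R$ or $P_R^\bullet$), which forces the existence of \emph{another} simple edge, possibly far away, of type $P_L$, $N$ \emph{or} $U_L$; and in the case of an edge decorated only with $\tb$ it terminates the search using that an ALT-diagram is not an RP-diagram, so $i+2$ cannot label the last node of the standard box. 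Without your false innermost lemma and without this propagation-and-termination mechanism, your case analysis neither excludes decorated non-simple north arcs nor closes the slanted-edge case, so the proposal as written does not prove the proposition.
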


\begin{proof}
If $D\neq I$ is not top basic, then there exists at least one non-propagating edge on the north face joining two consecutive nodes. Necessarily one of these is a simple edge $e=\{i, i+1\}$, in fact, by (D1) we cannot have a $\circ$-decoration on a non-propagating edge to the left of an edge with a $\bullet$-decoration. If $e$ is of type $U$, $N$, $P$, $S$ or $P^*$ then (2) is satisfied. Now suppose that $i$ is the maximum index such that $e$ as well as all the simple edges on its left are not of any type. Consider the edge $e'$ leaving node $i+2$.
\begin{itemize}
    \item[(a)] Suppose $e'=\{i+2, i+k\}$. If $k>3$ then there exists a simple edge of type $N$ to the right of $e$, so assume $k=3$. The edge $e'$ cannot be decorated with a $\bullet$, otherwise $e$ would be of type $U_R$, and $i\ne n$, otherwise $D$ would be top basic. Then either $e'$ is simple, so by maximality it is of type $U$, $N$, $P$, $S$ or $P^{\circ}_L$, or it is decorated with a $\circ$. In the latter case, by (D1), on the right of $e'$ there can be only non-propagating edges. Hence, there exists another simple edge and it has to be of type $N$ or $U_L$ (see Figure \ref{dimtype} (C)). 

    \item[(b)]  Suppose $e'=\{i+2, (i+2)'\}$. If $e'$ is undecorated, since $D$ is not top basic, there must be another simple edge on its right which, by maximality, has to be of some type. The edge $e'$ cannot be decorated with a $\bullet$
   otherwise $e$ would be of type $S_R$. If $e'$ is decorated with a $\circ$, then $i\ne n$ otherwise $D$ would not be admissible, and by (D1) to the right of $e'$ there can only be non-propagating edges. Then there exists a simple edge of type $N$, $S_L$ or $U_L$, see Figure \ref{dimtype} (B).
   
   \item[(c)] Suppose $e'=\{i+2, m'\}$ not vertical. Then $m> i+2$ and, since the non-propagating edges right to $i+2$ can be decorated at most with a $\circ$, then there exists a simple edge of type $P_L$, $N$, or $U_L$ (see Figure \ref{dimtype} (A)).
\end{itemize}

\end{proof}

\begin{figure}[hbtp]
\centering
\includegraphics[scale=0.6]{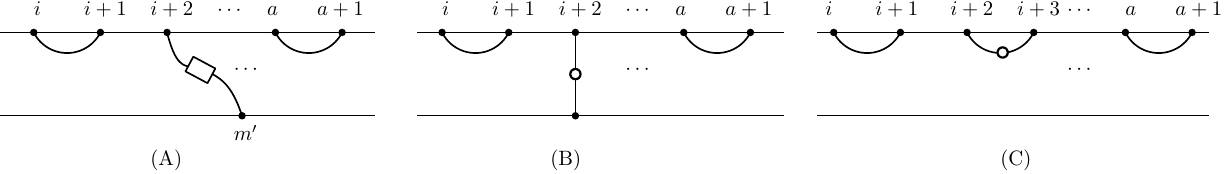}
\caption{Configurations in the proof of Proposition \ref{dimsimple}.}
\label{dimtype}
\end{figure}

\begin{Definition}[Suitable edges and neighbors]\label{def:suitable}
Let $D$ be an ALT-diagram.
\begin{itemize}
\item[(1)] Assume $D$ is top basic:
\begin{itemize}
    \item[(a)] if $D$ has at least one $ \locb$, then we say that any simple edge $e$ is \textit{suitable} and the leftmost $\locb$ is the \textit{neighbor of $e$};
    \item[(b)] if $D$ has a $\lob$, then $\{1,2\}$ is \textit{suitable} and the $\lob$ is its \textit{neighbor};
    \item[(c)] if $D$ has a $\loc$, then $\{n+1,n+2\}$ is \textit{suitable} and the $\loc$ is its \textit{neighbor};
    \item[(d)] if $D$ has no loops and no decorated edges on the south face, then any simple edge $e=\{i,i+1\}$ is \textit{suitable} and its \textit{neighbor} is the highest edge below $e$ intersected by the vertical line $i + 1/2$;
    \item[(e)] if $D$ has no loops and at least one decorated edge on the south face, then a simple edge $e=\{i, i+1\}$ is \textit{suitable} if the vertical line $i+1/2$ intersects in the south face either an edge decorated with both a $\bullet$ and a $\circ$, or the rightmost edge decorated with a $\bullet$, or the leftmost edge decorated with a $\circ$. The edge intersected by this vertical line is called \textit{neighbor of $e$}.
\end{itemize}
\item[(2)] If $D$ is not top basic, then any simple edge of type $U$, $N$, $P$, $S$ or $P^*$ is \textit{suitable}. Its \textit{neighbor} is the unique edge $f$ defined in Definition~\ref{deftypes}, except in type $N$, where the \textit{neighbor} is the highest non-propagating edge $f=\{j,k\}$ below $e$, and in type $P_R^\bullet$ (respectively $P_L^\circ$), where the \textit{neighbor} is the $\lob$ (respectively $\loc$).

\end{itemize}

\end{Definition}

Note that in an ALT-diagram $D$ a suitable edge always exists by definition and Proposition~\ref{dimsimple}. 

\begin{Example}
Both edges $\{3,4\}$ and $\{5,6\}$ in the top basic diagram of Figure \ref{suitable}(A) are suitable and their common neighbor is $\{3',6'\}$, while $\{1,2\}$ is not suitable. In the diagram in Figure \ref{suitable}(B), $\{1,2\}$ and $\{5,6\}$ are suitable since they are of type $S_R$ and $P_L$ respectively. In Figure \ref{suitable}(C) all the simple edges are suitable with common neighbor the leftmost $\locb$.
\begin{figure}[hbtp]
\centering
\includegraphics[height=20mm, keepaspectratio]{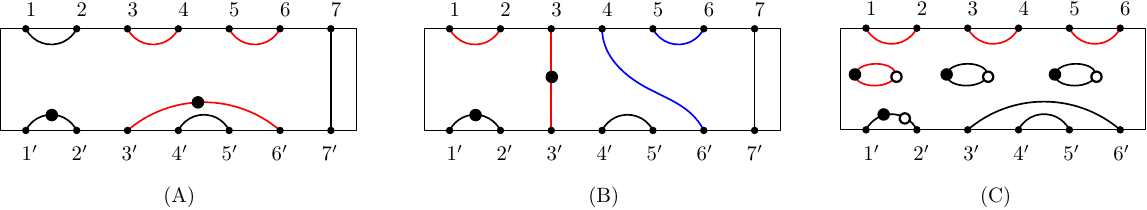}
\caption{Suitable edges and neighbors.}
\label{suitable}
\end{figure}
\end{Example}

 Now, we describe a procedure called \textit{cut and paste operation}, $\cp$-operation for short, that starting from any suitable edge $e=\{i,i+1\}$ produces an admissible diagram $D'$ such that $D=D_i D'$, or $D=D_0 D'$ if $e$ is of the form $(\stackrel{\bullet}{\smile})$  or $D=D_{n+2}D'$ if $e$ is of the form $(\stackrel{\circ}{\smile})$. Recall that $D_i$, with $i\in\{0, 1 \ldots, n+2\}$ are the simple diagrams described in Figure \ref{simple}. The idea is quite easy: we replace (except in one case) the suitable edge $e$ with two new non-intersecting edges, as sketched in Figure~\ref{cutpaste}. If $e$ is decorated, then we displace the decorations of the neighbor edge of $e$ in an ad hoc way obtaining a new admissible diagram. 

\begin{figure}[hbtp]
\centering
\includegraphics[scale=0.6]{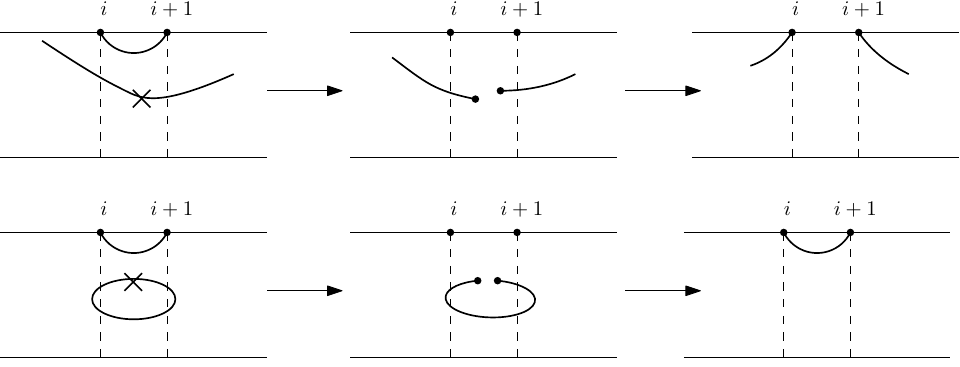}
\caption{Step (cp2) of the cut and paste operation.}
\label{cutpaste}
\end{figure}

\begin{Definition}[Cut and paste operation]\label{definition:cutandpaste}
Let $D$ be an ALT-diagram with a suitable edge $e=\{i, i+1\}$.
\begin{enumerate}
\item[(cp1)] Delete the simple edge $e$.
\item[(cp2)] Cut the neighbor of $e$ and join the two free endpoints of the cut edge to the nodes $i$ and $i+1$ as to obtain two new non-intersecting edges or the edge $\{i,i+1\}$, see Figure~\ref{cutpaste}. 
\item[(cp3)] If the neighbor edge of $e$ is decorated, then: 
\begin{itemize}
    \item[(1)] distribute its decorations on the new edges as in Figure~\ref{cp-general}, if $e$ is one of the depicted cases and denote by $\cp_e(D)$ the diagram obtained from $D$ applying the previous steps;
    \item[(2)] $\cp_e(D)=F(\cp_{F(e)}(F(D)))$  otherwise.
\end{itemize}
\end{enumerate}
\end{Definition} 

\begin{figure}[hbtp]
\centering
\includegraphics[scale=0.7]{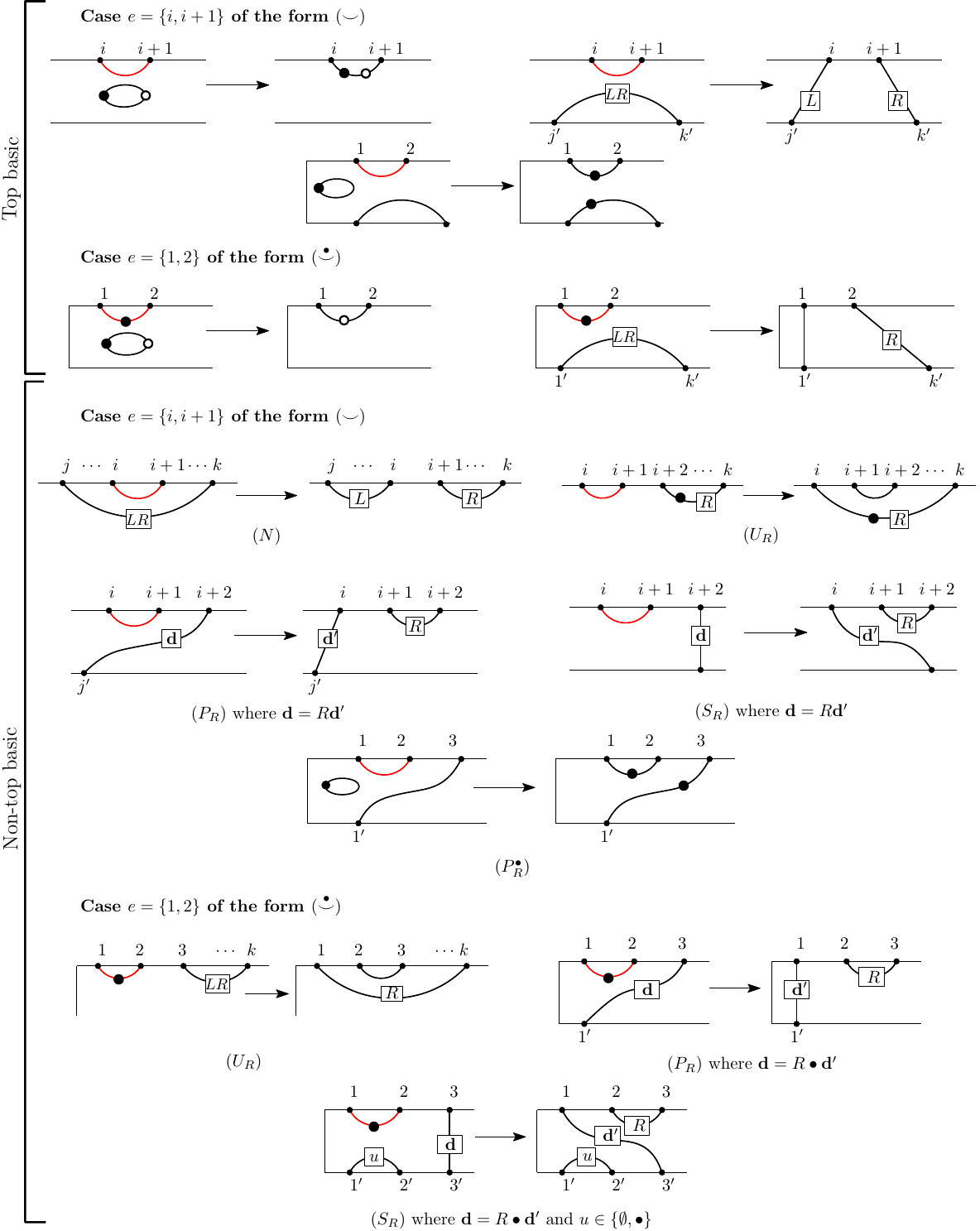}
\caption{Local cp-operations, where $L\in\{\emptyset, \bullet\}$ and $R\in \{\emptyset, \circ\}$.}
\label{cp-general}
\end{figure}

It is not hard to see that given an ALT-diagram $D$ with a suitable edge $e$, either $e$ or $F(e)$ appears  in Figure~\ref{cp-general}. In Appendix~\ref{A} some detailed examples of applications of the cp-operations are given. In Figure~\ref{cpF}, we develop an application of the cp-operation for a suitable edge not appearing in Figure~\ref{cp-general}.

\begin{figure}[hbtp]
\centering
\includegraphics[scale=0.7]{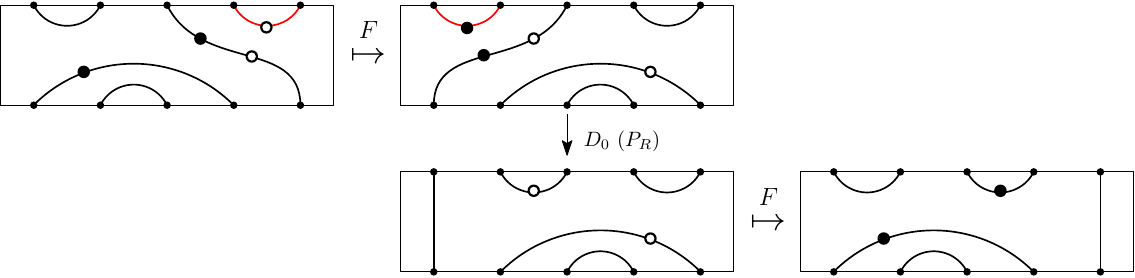}
\caption{Example of cp-operation for a simple edge $e$ of type $P_L$. First conjugate the diagram, then apply the cp-operation to $F(e)$ (of type $P_R$) and finally apply again the conjugate. }
\label{cpF}
\end{figure}

\smallskip

Now we show that the newly obtained diagram $\cp_e(D)$ is admissible and enjoys a uniqueness property.

\begin{Lemma}\label{lemma:cplength}
Let $e=\{i,i+1\}$ be a suitable edge of an ALT-diagram $D$ in $\dbD$. Then 
\begin{enumerate}
    \item $\cp_e(D)$ is admissible;
    \item $D=D_0 \,\cp_e(D)$ if $e$ is of the form $(\stackrel{\bullet}{\smile})$, $D=D_{n+2} \,\cp_e(D)$ if $e$ is of the form $(\stackrel{\circ}{\smile})$, $D=D_i \, \cp_e(D)$ otherwise.
    \item $\ell(\cp_e(D))=\ell(D)-1$.
\end{enumerate}
\end{Lemma}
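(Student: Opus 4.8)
All three assertions will be established by the same device. Since the $\cp$-operation alters $D$ only in a neighbourhood of the suitable edge $e=\{i,i+1\}$ and of its neighbour $f$, I would run through the finite list of possibilities for the pair $(e,f)$ --- the forms $(\smile)$, $(\stackrel{\bullet}{\smile})$, $(\stackrel{\circ}{\smile})$ of Definition~\ref{def:simpleedge}, the types $U$, $N$, $P$, $S$, $P_R^\bullet$ of Definition~\ref{deftypes}, and the four basic-diagram subcases of Definition~\ref{def:suitable}(1) --- and verify each claim locally, the remainder of the diagram being untouched. The relevant local pictures are exactly those collected in Figure~\ref{cp-general}. The form of $e$ dictates which generator is factored out: $(\stackrel{\bullet}{\smile})$ pairs with $D_0$, $(\stackrel{\circ}{\smile})$ with $D_{n+1}$, and $(\smile)$ at columns $i,i+1$ with $D_i$, so that the decoration on the cup of the generator matches the form of $e$.

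\textbf{Part (2): the factorization.} I would compute the concatenation $D_i\cdot\cp_e(D)$ (resp. $D_0\cdot\cp_e(D)$) directly by stacking the generator on top of $\cp_e(D)$. The cup of $D_i$ in columns $i,i+1$ reproduces the simple edge $e$, while the cap of $D_i$ in the south face reconnects the two free ends that step (cp2) attached to the north nodes $i$ and $i+1$ of $\cp_e(D)$, thereby restoring the neighbour edge $f$ (and, in the loop cases, the neighbouring $\lob$ or $\lot$); every strand away from columns $i,i+1$ passes straight through. Hence the underlying pseudo-diagram of the product coincides with that of $D$. It then remains to check that, after applying the relations (r1)--(r3), the decoration carried by $D_i$ together with the decorations redistributed in (cp3) reassemble into the decorations of $D$; this is precisely what the local rules of Figure~\ref{cp-general} are engineered to guarantee, and it is verified case by case. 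Since the simple diagrams satisfy (b1)--(b4) and $b_w$ is independent of the chosen reduced word, no ambiguity arises.

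\textbf{Part (1): admissibility.} Admissibility is largely inherited: away from $e$ and $f$ nothing changes, so conditions (A1)--(A5) hold there by the hypothesis that $D\in\db$. The work is to check the two new edges produced in (cp2)--(cp3). One verifies that they respect rule (D1) (the $\bullet$ can still be pulled to the left wall and the $\circ,\tb$ to the right without crossings), that the redistribution creates no pair of adjacent decorations of the same type, so that $\cp_e(D)$ stays irreducible, and that the global counts are preserved: the parity condition (A2) on the number of $\lot$ and $\bullet$ and the count of $\circ$, together with the loop condition (A1), are either left invariant or changed in an already-permitted way (e.g. converting a loop neighbour into a boundary edge in the basic subcases (a),(b)). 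A short check in each case yields $\cp_e(D)\in\db$, and one verifies en passant that $\cp_e(D)$ is again an ALT-diagram (or the identity), a fact needed for part (3), since the $\cp$-operation never manufactures the peak structure defining P- or PZZ-diagrams.

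\textbf{Part (3): length.} Here I would use the algebraic length formula of Definition~\ref{deflength}. Knowing from part (1) that both $D$ and $\cp_e(D)$ are ALT-diagrams, I apply $\ell=\tfrac12\nu+\weight$ to each and show $\tfrac12\,\Delta\nu+\Delta\weight=-1$, where $\Delta$ denotes the value on $\cp_e(D)$ minus that on $D$. In the undecorated/geometric cases the weight is unchanged, and deleting the simple edge together with the resplitting of the neighbour removes exactly one unit from $\tfrac12\sum_i\nu_i$; in the decorated cases the redistribution of $\bullet$ and $\tb$ transfers weight between the two new edges, and one checks from \eqref{weight1}--\eqref{weight2} that $\Delta\weight$ exactly compensates $\tfrac12\,\Delta\nu$ to leave $-1$. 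The main obstacle is precisely this simultaneous bookkeeping of $\nu$ and $\weight$ under the redistribution, carried out uniformly across all types --- most delicately for types $U$, $S$ and $P_R^\bullet$ and for the basic subcases (1c)--(1d), where the neighbour is selected geometrically and may carry nontrivial weight.
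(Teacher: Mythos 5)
Your proposal is correct and follows essentially the same route as the paper: parts (1) and (2) by direct local verification against the pictures in Figure~\ref{cp-general} (the paper simply says these ``follow by construction''), and part (3) by the same case-by-case bookkeeping showing $\tfrac12\,\Delta\nu+\Delta\weight=-1$, which matches the paper's observation that the only possibilities are $(\Delta\weight,\Delta\nu)\in\{(0,-2),(-2,+2),(-1,0)\}$, the last occurring exactly for type $P_R^\bullet$ and the basic loop cases. Your explicit remark that $\cp_e(D)$ remains an ALT-diagram, so that the formula $\ell=\tfrac12\nu+\weight$ applies to it, is a point the paper leaves implicit and is a welcome clarification rather than a deviation.
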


\begin{proof}
By (cp2), the edges of $\cp_e(D)$ do not intersect. It is not difficult to verify that all the conditions in Definition \ref{def:admissible} hold. In fact, in $\cp_e(D)$ no new loops occur, so (A1) is satisfied. In Figure \ref{cp-general}, we describe case by case how the decorations distribute in the diagram after the cp-operation. In these procedures, either the number of the decorations is the same, or it varies of 2, so (A2) holds. Finally, in the situations in Figure \ref{cp-general} where $\{1,1'\}$ occurs in $\cp_e(D)$ or $\ab(\cp_e(D))=1$, (A3) and (A4) hold. By diagram concatenation, (2) easily follows.

By Lemma \ref{rem:length-conj}, $\ell(F(D))=\ell(D)$. Moreover, by (cp3) in Definition \ref{definition:cutandpaste}, it suffices to consider the suitable edges in Figure~\ref{cp-general} since each remaining case is obtained as a conjugate of one of these.

 Statement (3) follows from a case-by-case verification based on the computation of the difference between the lengths of $D$ and $\cp_e(D)$. We only need to consider the edges involving the nodes:
 \begin{itemize}
     \item[($e_1$)] $i$ and $i+1$ when $i\ge 1$ and $e$ is of the form $(\smile)$;
     \item[($e_2$)]  1 and 2 when $i=1$ and $e$ is of the form $(\stackrel{\bullet}{\smile})$;
 \end{itemize}
   since all the other edges in $D$ and $\cp_e(D)$ coincide.
More precisely, one of the following occurs:
\begin{itemize}
    \item[(a)] $\weight(\cp_e(D))=\weight(D)$ and $\nu(\cp_e(D))=\nu(D)-2$;
    \item[(b)] $\weight(\cp_e(D))=\weight(D)-2$ and $\nu(\cp_e(D))=\nu(D)+2$;
    \item[(c)] $\weight(\cp_e(D))=\weight(D)-1$ and $\nu(\cp_e(D))=\nu(D)$,
\end{itemize}
from which $\ell(\cp_e(D))=\ell(D)-1$, proving the result. For instance, case (c) occurs only when $e$ is of type $P^*$ or when in a top basic diagram there is a loop edge. 

The computations for the cases $N, U_R, P_R, P_R^\bullet,  S_R$ are detailed in Table~\ref{tabular} in the Appendix~\ref{A}. Now we analyse explicitly the top basic cases referring to Figure \ref{cp-general}. If $D$ has a $\locb$, then $\weight(\locb)_D=n+1$ while $\weight(i,i+1)_{\cp_e(D)}=n$ and $\nu(\cp_e(D))=\nu(D)$. 
If $D$ has a $\lob$, then $\nu(D)=\nu(\cp_e(D))$ and $\weight(D)=\weight(\cp_e(D))+1$ since $\weight(\lob)=1$ and the other edges have weight equal to zero.
In the second case of the first row, if $\bf d=\bullet^\alpha \circ^\beta$, where $\alpha, \beta \in \{0,1\}$, then $\weight(j',k')_D=\alpha(j-1)+\beta(n+2-k)$, while $\weight(i,j')_{\cp_e(D)}=\alpha(j-1)$ and $\weight(i+1, k')_{\cp_e(D)}=\beta(n+2-k)$. Hence $\weight(D)=\weight(\cp_e(D))$ and $\nu(D)=\nu(\cp_e(D))+2$ since the vertical line $i+1/2$ in $\cp_e(D)$ does not intersect the edges $\{i, i+1\}$ and $\{j', k'\}$ as in $D$.  
\end{proof}

 We are able to prove the following.

\begin{Proposition}\label{unicity}
Let $e$ be a suitable edge of an ALT-diagram $D$. Then $\cp_e(D)$ is the unique admissible diagram satisfying the conditions (1), (2) and (3) of Lemma~\ref{lemma:cplength}.
\end{Proposition}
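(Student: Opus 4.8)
The plan is to fix an admissible diagram $D'$ satisfying conditions (1), (2) and (3) of Lemma~\ref{lemma:cplength} and to reconstruct it feature by feature, showing it must coincide with $\cp_e(D)$; since $\cp_e(D)$ itself satisfies these conditions by that lemma, this proves the claim. The whole argument reduces to a \emph{local} analysis around the columns $i$ and $i+1$. Indeed, by (2) we have $D = D_i\,D'$ (or $D = D_0\,D'$ when $e$ is of the form $(\stackrel{\bullet}{\smile})$), and left multiplication by the simple diagram $D_i$ leaves every strand not incident to the nodes $i,i+1$ untouched: it only fuses the two edges of $D'$ meeting $i$ and $i+1$ through the cap of $D_i$ and prepends the cup $e$. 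Hence $D'$ agrees with $D$ on every edge not incident to $\{i,i+1\}$, and is obtained from $D$ by deleting $e$ and splitting a single edge or loop of $D$ into two arcs terminating at $i$ and $i+1$, which are precisely the local moves tabulated in Figure~\ref{cp-general}.

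First I would determine the shape $r(D')$. Applying the shape map $r$ to $D = D_i D'$ shows that the edge of $D$ that must be split to recover $D'$ has to be splittable into two \emph{non-crossing} arcs landing on the consecutive free nodes $i,i+1$; in the standard representation, planarity of $D'$ together with the decoration rules forces this edge to be the neighbor $f$ of $e$ in the sense of Definition~\ref{def:suitable}, since any farther edge would produce a crossing. This pins down $r(D')$ up to the single residual choice of whether $f$ is re-formed as a loop, which is exactly the situation of the basic cases and of type $P_R^\bullet$, where the neighbor of $\cp_e(D)$ is a $\lob$ or a $\lot$.

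The remaining data, namely the exact number of loops and the placement of the freed decorations, are then fixed by the admissibility rules (A1)--(A5) of Definition~\ref{definition-admissible} together with the length identity (3). Since $D_i$ carries no $R$-decoration and at most the single $\bullet$ of $D_0$, the decorations of $D'$ away from $\{i,i+1\}$ coincide with those of $D$, while the decorations of the neighbor are the redistribution inverting step (cp3) of Definition~\ref{definition:cutandpaste}; the rules (A1)--(A5) then leave no freedom in their relative positions. Finally, the only genuine alternative left by $D = D_i D'$ alone is whether the neighbor survives as a loop in $D'$: inserting or deleting a $\lob$ (respectively a $\lot$) changes $\weight(D')$, and hence $\ell(D')$, by $\weight(\lob)=1$ (respectively $\weight(\lot)=n+1$) with no compensation possible, since the shape is already determined off the columns $i,i+1$; so condition (3) selects exactly the option realized by $\cp_e(D)$, giving $D' = \cp_e(D)$.

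I expect the main obstacle to be this last disambiguation. The equation $D = D_i D'$ does not by itself determine $D'$, because in the loop-related cases it admits more than one admissible preimage, and one must run the full case analysis over the types of suitable edge (Definition~\ref{deftypes}) and the branches of Figure~\ref{cp-general}, checking in each that the length identity (3), via the weights $\weight(\lob)=1$ and $\weight(\lot)=n+1$, isolates $\cp_e(D)$. Particular care is needed for the configurations in which the neighbor is a loop, where the redistribution of decorations interacts both with (A1) and with the admissible western and eastern sides of Definition~\ref{definition-admissible}.
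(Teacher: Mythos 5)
Your local reduction (writing $D'$ as the diagram obtained from $D$ by deleting $e$ and splitting one edge into two arcs ending at $i$ and $i+1$) is sound and matches the paper's framework, but the step where you pin down the shape $r(D')$ contains a genuine error. You claim that planarity of $D'$ forces the split edge to be the neighbor $f$ of $e$, ``since any farther edge would produce a crossing,'' so that the length condition (3) is only needed to settle a residual loop/no-loop ambiguity. This is false. For each of the types of Definition~\ref{deftypes} there are perfectly planar, admissible alternatives in which the nodes $i$ and $i+1$ of $D'$ are joined to two nodes $a,b$ lying both strictly to the left of $i$, or both strictly to the right of $i+1$ (the two arcs are then nested, not crossing), so that the edge $\{a,b\}$ of $D$ --- not the neighbor --- is the split one; moreover, in the cases of a $\lob$ in a basic diagram and of type $P_R^\bullet$, the node $1$ can be joined to $1'$, turning $D'$ into an LP-diagram. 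None of these configurations is excluded by planarity or by (A1)--(A5). The paper eliminates each of them by a length estimate: the two new arcs cross an extra vertical line $(i-1)+1/2$ or $(i+1)+1/2$ while the weights do not decrease (or, in the LP alternative, the P-diagram length formula applies), giving $\ell(D')\geq \ell(D)+1$ and contradicting (3). This case-by-case estimate, run separately for the basic cases and for the types $N$, $P$, $U$, $S$, $P_R^\bullet$, is the actual content of the proof --- not a check confined to ``loop-related cases'' as your final paragraph suggests.

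A second, smaller inaccuracy: once $r(D')=r(\cp_e(D))$ is established, you assert that (A1)--(A5) leave ``no freedom'' in redistributing the decorations, so that the cp-distribution is forced by admissibility alone. The paper instead argues that among the finitely many \emph{admissible} distributions, the unique one with $\ell(D')=\ell(D)-1$ is that of step (cp3); the length condition is needed here too. Note also that, because concatenation in $D_iD'$ is followed by the reductions of Figure~\ref{rel}, the decorations of the two arcs in $D'$ need not literally partition those of the fused edge of $D$ (adjacent identical decorations can cancel), and such weight-inflating alternatives are again ruled out by the length identity rather than by the admissibility axioms. With these two points repaired --- i.e., running the full shape-and-decoration comparison through the length function in every case --- your argument becomes the paper's proof.
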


\begin{proof}
As in the proof of Lemma~\ref{lemma:cplength}, by Definitions~\ref{definition:cutandpaste} and \ref{def:flipped}, it is enough to consider the situations described in Figure~\ref{cp-general}.
We first assume that $e=\{i, i+1\}$ is undecorated and let $D'$ be an admissible diagram such that $D=D_iD'$ and $\ell(D')=\ell(D)-1$. Now we show that $sh(D')$ must be equal to $sh(\cp_e(D))$.
\smallskip

If $D$ is a top basic diagram, then we have the following three situations. 
 \begin{itemize}
\item[(B1)]  Suppose a $\locb$ occurs in $D$. If $sh(D')\neq sh(\cp_e(D))$, then the edge $\{i, i+1\}$ is not in $D'$, and since $D$ is top basic with a $\locb$, the two nodes $a$ and $b$ connected respectively to $i$ and $i+1$ in $D'$ are on the north face, and
$b<a<i$ or $i+1<b<a$. In one case, $\{i, a\}$ and $\{b, i+1\}$ intersect the vertical line $(i-1)+1/2$, while in the other, $\{i, a\}$ and $\{i+1, b\}$ intersect the vertical line $(i+1)+1/2$, so $\nu(D')> \nu(D)$. These two edges have weight 0. In fact they are all undecorated, except  $\{1, i+1\}$ which is decorated with a $\bullet$ when $\{b,a\}=\{1,2\}$ in $D$ is decorated with a $\bullet$, or $\{i, n+2\}$ which is decorated with a $\circ$ when $\{b,a\}=\{n+1, n+2\}$ in $D$ is decorated with a $\circ$. Note also that the number of $\locb$ is the same in $D$ and $D'$. It follows that $\weight(D')=\weight(D)$, so $\ell(D')\geq \ell(D)+1$, against the hypothesis. Hence $sh(D')=sh(\cp_e(D))$. 

\item[(B2)] Suppose that a $\lob$ appears in $D$. If $sh(D')\neq sh(\cp_e(D))$ then the edge $\{1,2\}$ does not occur in $D'$. Let $a^*$ and $b^*$ be the nodes in $D'$ connected to $1$ and $2$ respectively. If $a,b>2$, both the edges $\{1, a^*\}$ and $\{2, b^*\}$ intersect the vertical line $2+1/2$. Note that $D'$ necessarily has a $\lob$ and so it cannot contain any other $\bullet$ decoration. Moreover note that $\{1, a^*\}$ is decorated with a $\circ$ if and only if $\{b^*, a^*\}=\{n+1, n+2\}$ is decorated with $\circ$ in $D$. The edge $\{2, b^*\}$ is decorated with a $\circ$ if and only if $\{a^*,b^*\}=\{a', b'\}$ in $D$ is decorated with a $\circ$. Therefore, by Remark \ref{rem:endpoints}, $\weight(2, b')_{D'}=\weight(a',b')_D$. Hence $\ell(D')>\ell(D)$ against the hypothesis and  $sh(D')=sh(\cp_e(D))$. If $a^*=1'$, then $D'$ is a LP-diagram with $j_\ell \ge 2$. It is easy to see that $\ell(\widehat{D'})=\ell(D)-2$, hence $\ell(D')=2j_\ell -1 + \ell(\widehat{D'})=2 j_\ell -1 + \ell(D) -2\geq \ell(D)+1$, against the hypothesis, so also in this case $sh(D')=sh(\cp_e(D))$.

\item[(B3)] Now, suppose that there are no loop edges in $D$; see the first row of Figure~\ref{cp-general}, right. If $sh(D')\neq sh(\cp_e(D))$, then the edge $\{j',k'\}$ occurs in $D'$ and it is decorated as in $D$. Let $a^*$ and $b^*$ be the nodes in $D'$ connected to $i$ and $i+1$ respectively. As in (B1), $\weight(D')=\weight(D)$ and $a,b<i$ (respectively $a,b>i+1$), so the edges $\{a^*,i\}$, $\{b^*, i+1\}$ (respectively $\{i, a^*\}$, $\{i+1, b^*\}$) intersect the vertical line $(i-1)+1/2$ (respectively $(i+1)+1/2$), then $\ell(D')\geq \ell(D)+1$, against the hypothesis. Hence again $sh(D')=sh(\cp_e(D))$. 
\end{itemize}

\smallskip

Now assume $D$ is not top basic. There are several cases to consider.
\begin{itemize}
\item[($N$)] Let $e$ be of type  $N$. Suppose by contradiction that $sh(\cp_e(D))\neq sh(D')$. Then at least one of the two edges
 $\{j,i\}$ and $\{i+1,k\}$ is not in $D'$, and  suppose $\{j,i\}$ is not in $D'$ (the other case being analogous). Set $t^*\neq i$ the node connected to $j$ in $D'$. Since $j$ and $t^*$ are not involved in the product $D_iD'=D$, the edge $\{j,t^*\}$ must occur and be decorated as in $D$ and so $t^*=k$. If in $D'$, $i$ is connected to $a\neq i+1$ and $i+1$ is connected to $b\neq i$, then the edge $\{a,b\}$ is in $D=D_iD'$, therefore either $a,b<i$ or $a,b>i+1$ since $\{j,k\}$ is the neighbor of $e$. In the two situations, either $\{a,i\}$ and $\{b, i+1\}$ intersect the vertical line $(i-1)+1/2$ or $\{i,a\}$ and $\{i+1, b\}$ intersect the vertical line $(i+1)+1/2$. Since such edges are undecorated, in both cases, $\ell(D')\geq \ell(D)+1$ against the hypothesis. Hence $sh(D')=sh(\cp_e(D))$.

\smallskip

\item[($P$)] Let $e$ be of type  $P_R$. In $D'$, for parity reasons, $j'$ is not connected to $i+1$. As before, suppose by contradiction that $sh(D')\neq sh(\cp_e(D))$, then at least one edge between $\{i+1,i+2\}$ and $\{i, j'\}$ is not in $D'$. Suppose $\{i+1, i+2\}$ is not in $D'$ (the other case being analogous), then the edge $\{i+2, j'\}$ occurs in $D'$. In $D'$, $i$ is connected to a node $a^*\neq i+1$ and $i+1$ is connected to a node $b^*\neq i$, with $a^*, b^*<i$. Then the vertical line $(i-1)+1/2$ does not intersect the edge $\{a^*, b^*\}$ in $D$ while it intersects both edges $\{a^*, i\}$ and $\{b^*, i+1\}$  in $D'$. Furthermore, the edge $\{a^*, b^*\}$ in $D$ could be decorated with at most a $\bullet$. In this case, the $\bullet$ should appear in $\{a^*, i\}$ or $\{b^*, i+1\}$ in $D'$, and by the definition of weight, one can check that $\mathsf{w}(a^*, b^*)_{D}\leq \mathsf{w}(a^*, i)_{D'}+\mathsf{w}(b^*, i+1)_{D'}$ leading to $\ell(D')\geq \ell(D)+1$ against the hypothesis. Therefore $sh(\cp_e(D))=sh(D')$.

\smallskip

\item[($U$)] Let $e$ be of type  $U_R$. As before, suppose by contradiction that $sh(D')\neq sh(\cp_e(D))$, then necessarily one between $\{i,k\}$ and $\{i+1,i+2\}$ does not occur in $D'$. Suppose that $\{i,k\}$ is not in $D'$ (the other case being analogous). In $D'$, $k$ must be connected to $i+2$ and decorated with $\bullet R$ as in $D$. Now assume that $i$ is connected to $a\neq i+1$ and $i+1$ to $b\neq i$. Since $\{i+2, k\}$ contains a $\bullet$ this forces $b<a<i$. Now the edges $\{b, i+1\}$ and $\{a, i\}$ intersect the vertical line $(i-1)+1/2$ in $D'$ while no edges in the north face of $D$ intersect this line. Since $\{b,a\}$ can be decorated  with at most a $\bullet$, by Remark \ref{rem:endpoints}, $\weight(b,a)_D=\weight(b, i+1)_{D'}$. Hence $\ell(D')\geq \ell(D)+1$, against the hypothesis, then $sh(D')=sh(\cp_e(D))$. 

\smallskip

\item[($S$)] Let $e$ be of type  $S_R$. As before, suppose by contradiction that $sh(D') \neq sh(\cp_e(D))$, then at least one between $\{i+1,i+2\}$ and $\{i,(i+2)'\}$ does not occur in $D'$. Suppose that $\{i+1, i+2\}$ is not in $D'$ (the other case being analogous), then the edge $\{i+2, (i+2)'\}$ must occur and be decorated with \textbf{d} as in $D'$. The same argument used in $U$ shows that also in this case $\ell(D')\geq \ell(D)+1$, against the hypothesis. Then $sh(D')=sh(\cp_e(D))$.
\smallskip

\item[($P_R^\bullet$)] Let $e=\{1,2\}$ be of type $P_R^\bullet$. Again, suppose by contradiction that $sh(D') \neq sh(\cp_e(D))$. Then $\{1,2\}$ is not contained in $D'$, thus necessarily the edges $\{1,1'\}$, $\{2,3\}$ 
 and a $\lob$ occur in $D'$. Now, $D'$ is a LP-diagram with $j_\ell =2$ hence $\ell(D')= 3 + \ell(\widehat{D'})$. Moreover as in (B2) we have $\ell(\widehat{D'})=\ell(D)-2$, so $\ell(D')=\ell(D)+1$, against the hypothesis. Therefore  $sh(D') = sh(\cp_e(D))$.

\end{itemize}
 In Lemma \ref{lemma:cplength}, we showed that $\ell(\cp_e(D))=\ell(D)-1$ and we just proved that if $D=D_iD'$ and $\ell(D')< \ell(D)$, then $sh(D') = sh(\cp_e(D))$. Now we have to show that among the ways of splitting the decorations on the edges of $D'$ in an admissible way, the unique for which $\ell(D')=\ell(D)-1$ yields is the one provided by the cp-operation. In all the top basic cases, $U_R$ and $P_R^\bullet$ there is only one admissible way. In cases $N$, $P_R$ and $S_R$ there are at most three possibilities and in those that differ from the cp-operation way, only one decoration can be displaced. In these cases, the total weight increases and the length of $D'$ is not the right one. Hence the only possibility is $D'=\cp_e(D)$. 

To complete the proof one should analyze the suitable edge $(\stackrel{\bullet}{\smile})$. However,
the only differences with respect to the treated cases are extra occurrences of $\bullet$, and the proof is analogous.
\end{proof}

\begin{Proposition}\label{unicityP}
Let $D$ be a P-diagram in $\dbD$. Then there exist a simple edge $e=\{i,i+1\}$ and a unique admissible diagram $D'$ such that $\ell(D')=\ell(D)-1$ and $D=D_0 D'$ if $e$ is of the form $(\stackrel{\bullet}{\smile})$, $D=D_{n+2}D'$ if $e$ is of the form $(\stackrel{\circ}{\smile})$, $D=D_i D'$ otherwise.
\end{Proposition}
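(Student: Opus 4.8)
The plan is to reduce the P-diagram case to the already-established ALT-diagram case (Proposition~\ref{unicity}) by exploiting the inner alternating diagram $\widehat{D}$ and the structure of the length function in Definition~\ref{deflength}(4). The key observation is that a P-diagram consists of a string of undecorated vertical edges on the left (when $D$ is of type LP or LRP), a single $\lob$ and a string on the right (type RP or LRP) with a single $\tb$ on the edge $\{n+2,(n+2)'\}$, together with the genuinely nontrivial inner part $\widehat{D}$, which by the remark following Definition~\ref{def:altportion} is an ALT-diagram up to the addition of $\circ$ decorations on its two rightmost edges. Since the length of $D$ is an explicit additive constant plus $\ell(\widehat{D})$, producing a diagram of length $\ell(D)-1$ amounts to decreasing $\ell(\widehat{D})$ by one while leaving the outer undecorated-vertical-edge structure and the $\lob$/$\tb$ decorations untouched.

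First I would treat the core situation in which $\widehat{D}$ is not the identity diagram. By Proposition~\ref{dimsimple}, $\widehat{D}$ contains either a basic configuration admitting a suitable edge or a simple edge of type $U,N,P,S$ or $P_R^\bullet$; in either case Definition~\ref{def:suitable} supplies a suitable edge $\hat{e}=\{\hat{i},\widehat{i+1}\}$ of $\widehat{D}$. Translating back through the labeling $\hat{k}=k+j_\ell-1$ gives a simple edge $e=\{i,i+1\}$ of $D$ of the required form. I would then set $D':=\cp_{\hat{e}}(\widehat{D})$ extended by the same outer undecorated vertical edges and the same $\lob$ and $\tb$ decorations as $D$, so that $D'$ is again a P-diagram of the same type (LP, RP, or LRP) with $\widehat{D'}=\cp_{\hat{e}}(\widehat{D})$. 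By Lemma~\ref{lemma:cplength}, $\cp_{\hat{e}}(\widehat{D})$ is admissible, $\widehat{D}=D_{\hat{i}}\cp_{\hat{e}}(\widehat{D})$ (or $D_0$ in the $(\stackrel{\bullet}{\smile})$ case), and $\ell(\cp_{\hat{e}}(\widehat{D}))=\ell(\widehat{D})-1$. The additive formula of Definition~\ref{deflength}(4) then yields $\ell(D')=\ell(D)-1$ directly, and the identity $D=D_iD'$ follows because multiplication by the simple diagram $D_i$ acts on $D$ exactly as $D_{\hat{i}}$ acts on the inner box, leaving the outer vertical edges and decorations unchanged. The care needed here is to check that attaching the simple diagram $D_i$ does not disturb the $\lob$ or the $\tb$ on $\{n+2,(n+2)'\}$ and does not create a relation; this is where I would verify the index bound $j_r-j_\ell>2$ from Definition~\ref{defP} to guarantee that the active node $i$ stays strictly inside the inner box.

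Next I would dispose of the degenerate cases in which $\widehat{D}$ is (essentially) trivial, so that the P-diagram reduces to just the boundary structure. In the type LP case with $\widehat{D}=I$, the diagram $D$ is a comb of undecorated vertical edges preceded by a $\lob$, and the only simple edge available is $\{1,2\}$, which is of type $P_R^\bullet$; applying the $\cp$-operation there removes the $\lob$ and decreases the length by exactly the constant $2j_\ell-1$ down to the correct value, as the computation in case (B2)/$(P_R^\bullet)$ of Proposition~\ref{unicity} already shows. Symmetric degenerate cases for RP (peeling off the $\tb$-decorated edge $\{n+2,(n+2)'\}$ via a $(\stackrel{\circ}{\smile})$ simple edge) and for LRP must be handled by the analogous boundary $\cp$-operations.

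For uniqueness, I would argue that any admissible $D'$ with $\ell(D')=\ell(D)-1$ and $D=D_iD'$ (or $D_0D'$) must itself be a P-diagram of the same type with the same outer vertical-edge structure, because the simple diagram $D_i$ can only act locally at nodes $i,i+1$ and cannot alter the far-apart $\lob$ or the boundary $\tb$; hence $\widehat{D}=D_{\hat{i}}\widehat{D'}$ with $\ell(\widehat{D'})=\ell(\widehat{D})-1$, and Proposition~\ref{unicity} forces $\widehat{D'}=\cp_{\hat{e}}(\widehat{D})$, whence $D'$ is uniquely determined. The main obstacle I anticipate is precisely this reduction step: one must prove that no competing $D'$ can trade a decrease in the outer structure (for instance by absorbing the $\lob$ into a different $j_\ell$, or by converting the boundary $\tb$ into inner decorations) for a compensating change in $\ell(\widehat{D'})$, since the length formula mixes the outer constants and the inner length additively. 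Ruling out such tradeoffs requires a careful parity-and-weight bookkeeping entirely parallel to the case analysis (B1)--(B3) and (N),(P),(U),(S),$(P_R^\bullet)$ of Proposition~\ref{unicity}, adapted to account for the fixed contributions $2j_\ell-1$, $2n-2j_r+4$, and $2n+2j_\ell-2j_r+3$.
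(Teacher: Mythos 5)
Your reduction to the ALT case via $\widehat{D}$ is the right starting point, and in fact it is how the paper argues when a suitable edge of $\widehat{D}$ \emph{not involving} $\hat{1}$ exists. But your proposal contains a genuine gap: you assume throughout (both in the construction and in the uniqueness argument) that the minimizing diagram $D'$ has \emph{the same outer structure} as $D$ --- the same initial undecorated vertical edges and the same $\lob$. This is false precisely in the case the paper singles out, namely when the edge leaving $j_\ell$ is simple and $\{j_\ell,j_\ell+1\}$ is of type $P_R$ inside $\widehat{D}$, or when $\widehat{D}$ is basic. In that situation the cp-operation on $\hat{e}=\{\hat{1},\hat{2}\}$ creates a vertical edge at $\hat{1}$, and gluing the result back inside the untouched outer frame produces a diagram that is not admissible at all: in the $P_R$ subcase the new vertical edge $\{j_\ell,j_\ell'\}$ carries a $\bullet$ trapped behind the $j_\ell-1$ undecorated vertical edges, violating (D1); in the basic subcase one can even get the identity shape with a floating $\lob$, which violates (D0). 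A minimal counterexample to your scheme is $D=D_2D_0D_1D_2$ (the diagram of the left peak $s_2s_0s_1s_2$, an LP-diagram with $j_\ell=2$): your recipe yields ``identity plus $\lob$'', which is not a diagram of the algebra, whereas the correct $D'$ is the diagram of $s_0s_1s_2$, which has a cap $\{1,2\}$ and \emph{no} initial vertical edge --- the outer structure changes. The paper handles exactly this case by a separate operation $(K_L)$: delete $\{j_\ell,j_\ell+1\}$, cut the vertical edge $\{j_\ell-1,(j_\ell-1)'\}$, and rejoin $j_\ell-1$ to $j_\ell$ and $(j_\ell-1)'$ to $j_\ell+1$, producing a diagram with only $j_\ell-2$ initial vertical edges satisfying $D=D_{j_\ell}D'$ and $\ell(D')=\ell(D)-1$; only when the conditions above fail does it apply the cp-operation to a suitable edge of $\widehat{D}$ away from $\hat{1}$, as you do.

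Consequently your uniqueness argument also breaks: the claim that ``the simple diagram $D_i$ can only act locally and cannot alter the far-apart $\lob$ or the boundary $\tb$'' begs the question, since the unique competitor produced by $(K_L)$ does alter the boundary, absorbing one vertical edge into the peak. The paper's uniqueness proof must therefore compare candidates $D''$ that are LP-diagrams with $k\geq j_\ell-1$ initial vertical edges and rule out all $k$ but the correct one by the explicit length bookkeeping $\ell(D'')=2k+\ell(\widehat{D})$, using the auxiliary diagram $D''_>$ and Proposition~\ref{unicity}; this is the ``parity-and-weight bookkeeping'' you anticipated but did not carry out, and it cannot be bypassed. A smaller inaccuracy: your degenerate case $\widehat{D}=I$ cannot occur (Definition~\ref{defP} requires the edge leaving $j_\ell$ not to be an undecorated vertical edge), and a comb of vertical edges preceded by a $\lob$ has no simple edge of type $P_R^\bullet$, since that type requires the edge $\{3,1'\}$.
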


\begin{proof}
Assume that $D$ is of type LP. Let us suppose that 
\begin{itemize}
    \item[($\star$)] $D$ has a simple edge $\{j_\ell, j_{\ell}+1\}$ of type $P_R$ or $\widehat{D}$ is top basic.
\end{itemize}
 where $\widehat{D}$ is the inner alternating diagram of Definition~\ref{def:altportion} and $j_\ell>1$.
Consider the diagram $D'$ obtained from $D$ by deleting $\{j_\ell, j_\ell+1\}$, cutting the vertical edge $\{j_\ell-1, (j_\ell-1)'\}$ and joining $j_\ell-1$ to $j_\ell$ and $(j_\ell-1)'$ to $j_\ell+1$ and denote this procedure by \textit{$K_L$-operation} (see Appendix~\ref{A} for an example of application of the $K_L$-operation). The diagram $D'$ starts from the left with one $\lob$, $j_\ell-2\geq 0$ undecorated vertical edges and satisfies $D=D_{j_\ell}D'$. 
Note that $\nu_{j_\ell-1}(D')=\nu_{j_\ell-1}(D)+2$ and $\nu_k(D')=\nu_k(D)$ for all $k\neq j_\ell-1$. It follows that $\ell(\widehat{D})=\ell(\widehat{D'})-1$. 
When $j_\ell-2= 0$, we set $\widehat{D'}$ as $D'$ without the unique $\lob$.
Hence, $\ell(D')=2(j_\ell-1)-1 + \ell(\widehat{D'})=2j_\ell - 3 + \ell(\widehat{D})+1 = \ell(D)-1$. It remains to show that $D'$ is unique. 
\smallskip

Let $D''$ be an admissible diagram such that $D=D_{j_\ell} D''$, $\ell(D'')=\ell(D)-1$ and suppose by contradiction that $sh(D'')\neq sh(D')$. Since the shape are different, at least one of the two the edges $\{j_\ell-1, j_\ell\}$ and $\{j_\ell+1, (j_\ell-1)'\}$ does not occur in $D''$.  Since $D$ contains $\{j_\ell-1, (j_\ell-1)'\}$, and $D=D_{j_\ell} D''$ then such edge has to be also in $D''$, that turns out to be a LP-diagram having at least $j_\ell-1$ consecutive vertical edges from the left. If $j_\ell=n+1$, then either $D''=D$, or all the edges of  $D''$ from 1 to $n$ are vertical. Both cases are not possible: the first is clear, while in the second we get a non admissible diagram since condition (D0) of Section \ref{sec:undecorated} is not satisfied. Hence, we have $j_\ell<n+1$. 
\smallskip

By Definition \ref{deflength}, $\ell(D'')= 2k -1 + \ell(\widehat{D''})$, where $k-1\geq j_\ell -1$ is the number of consecutive undecorated vertical edges from the left.

Let $a^*$ be the node connected to $j_\ell$ and $b^*$ the one connected to $j_\ell +1$. It is clear that $D$ contains the edge joining $a^*$ and $b^*$. One of the following three cases can occur in $D''$, as depicted in Figure \ref{fig:cp_P}:
\begin{itemize}
    \item[(1)] $a^*=j_\ell'$, $b^*=(j_\ell +1)'$ and the edges are undecorated;
    \item[(2)] $a^*=j_\ell'$ and $b>j_\ell +1$ or
    $b^*=(j_\ell +1)'$ and $\{j_\ell +1, (j_\ell +1)'\}$ is decorated with a $\circ$;
    \item[(3)] $a,b>j_\ell +1$.
\end{itemize}

\begin{figure}
    \centering
    \includegraphics[scale = 0.6]{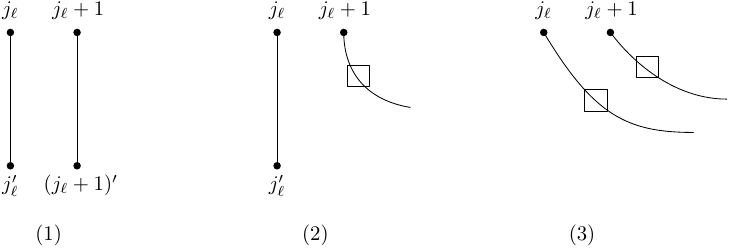}
    \caption{Cases in the proof of Proposition \ref{unicityP}. The boxes can be either empty or filled with a $\circ$. In (3), at least one of the two boxes is empty.}
    \label{fig:cp_P}
\end{figure}

 First note that if (1) or (3) occurs, then by ($\star$), $\widehat{D}$ is top basic, since $j_\ell'$ is not joined to $j_\ell +2$. In case (1), $D''$ has at least $j_\ell +1$ consecutive undecorated vertical edges from the left. It implies that in $D$, if $k-1>j_\ell +1$, the edges leaving the nodes from $j_\ell +2$ to $k-1$ are all vertical. Moreover, the portion of $D$ starting from the node $k$ is equal to $\widehat{D''}$, so $\ell(\widehat{D})=\ell(\widehat{D''})+1$. Therefore, $\ell(D'')\geq 2(j_\ell +2) -1 + \ell(\widehat{D''})= 2j_\ell +3 + \ell(\widehat{D})-1 = \ell(D)+3$, which contradicts the hypothesis. In case (2), comparing the lengths of $\widehat{D''}$ and $\widehat{D}$, it is not difficult to see that $\ell(\widehat{D''})=\ell(\widehat{D})-1$. In fact, note that the size of the box of $\widehat{D''}$ is one less the size of the box of $\widehat{D}$, and if the edge $\{j_\ell +1, b^*\}$ is decorated with a $\circ$, then $\weight_{D''}(\{j_\ell +1, b^*\})=\weight_D(\{j_\ell', b^*\})$. Hence, $\ell(D'')=2j_\ell +1 + \ell(\widehat{D})-1=\ell(D)+1$, which again contradicts the hypothesis. In case (3), the standard boxes of $\widehat{D}$ and $\widehat{D''}$ have the same size, so similarly to the arguments in the proof of Proposition \ref{unicity} for (B3), we have $\nu(\widehat{D''})\geq \nu(\widehat{D})$, so $\ell(D'')>\ell(D)$.

Hence $sh(D'')=sh(D')$. This shows that $D''=D'$ since the edges involving $j_\ell$ and $j_\ell+1$ in $D$ are undecorated, being $1< j_\ell < n+1$.
\smallskip 

If $(\star)$ is not satisfied, then by Proposition \ref{dimsimple}, in $\widehat{D}$ we can find a suitable edge $\widehat{e}$ of any type except $U_R$, $S_R$ and $P_R^\bullet$. It implies that $\widehat{e}$ does not involve the node $\widehat{1}$. Now we can apply the cp-operation to $\widehat{D}$ on $\widehat{e}$. Now, the diagram $D'$ obtained by putting side by side from the left one $\lob$, $j_\ell-1$ undecorated vertical edges and $\cp_{\widehat{e}}(\widehat{D})$, satisfies the thesis and it is unique by Proposition \ref{unicity}.
\smallskip 

If $D$ is of type RP, $F(D)$ is a P-diagram of type LP and so we just proved that there exist $i$ and $D'$ with $\ell(D')=\ell(D)-1$, such that $F(D)=D_i D'$, from which $D=F(F(D))=F(D_i)F(D')$, where $F(D_i)=D_{n+3-i}$.
\smallskip 

Finally, if $D$ is of type LRP, the thesis is obtained combining the two procedures above.
\end{proof}

\section{The algebra isomorphism}
\label{sec:isom}

In this section we relate the admissible diagrams to the basis elements of the Temperley--Lieb algebra $\TL(\Dn)$.
\smallskip

Let ${\bf w}=s_{i_1}s_{i_2}\cdots s_{i_k}$ be a reduced expression of $w\in W$, and set
$$D_{\bf w}:=D_{i_1}D_{i_2}\cdots D_{i_k}.$$

If $w\in \FC(\Dn)$, we can define $D_w:=D_{\bf w}$, where $\bf w$ is any reduced expression of $w$, since $D_{w}$ does not depend on the chosen reduced expression of $w$.
We define the $\Zd$-algebra homomorphism 
\begin{eqnarray}
    \tilde{\theta}_D: \TLD & \longrightarrow & \DD \\
    b_i &\mapsto & D_i \nonumber
\end{eqnarray} 
for all $i=0,\ldots,n+2$. 
Clearly, $\tilde{\theta}_D$ is surjective and maps the monomial basis element $b_w$ into the diagram $D_w$. Our goal is to show that this map is bijective. The proof will be based on a factorization of any admissible diagram into simple diagrams that we now present.

\subsection{Factorizations of admissible diagrams}

\begin{Lemma}\label{ad1}
Let $D$ be a PZZ-diagram or a P-diagram with $\ab(D)=1$. Then $D$ can be written as a finite product of simple diagrams.
\end{Lemma}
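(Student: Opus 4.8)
The plan is to run a single strong induction on the length $\ell(D)$, letting the two families feed into one another. The base case $\ell(D)=0$ forces $D=I$ by the remark following Definition~\ref{deflength}, and $I$ is the empty product. Throughout I use that every ALT-diagram is already known to be a product of simple diagrams: iterating the cut and paste operation, Lemma~\ref{lemma:cplength} factors any ALT-diagram $D\neq I$ as $D=D_i\,\cp_e(D)$ (or $D=D_0\,\cp_e(D)$) with $\cp_e(D)$ admissible of length $\ell(D)-1$, and Proposition~\ref{unicity} guarantees this is the only such factorization.

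Consider first a P-diagram $D$ with $\ab(D)=1$, hence of type LP or RP. Proposition~\ref{unicityP} supplies a simple edge $e=\{i,i+1\}$ and a unique admissible $D'$ with $\ell(D')=\ell(D)-1$ and $D=D_iD'$ (or $D=D_0D'$ when $e$ has the form $(\stackrel{\bullet}{\smile})$). It remains to see that $D'$ is again covered by the induction. Reading off the construction $(K_L)$ and its right-handed analogue in the proof of Proposition~\ref{unicityP}, the operation either lowers by one the number $j_\ell-1$ (resp. $n+2-j_r$) of boundary vertical edges while preserving $\ab(D')=1$ and the unique $\lob$ (resp. the $\tb$ on $\{n+2,(n+2)'\}$), or else it reduces the inner diagram $\widehat{D}$ by a cut and paste. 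In the first situation $D'$ is a shorter P-diagram with $\ab(D')=1$, degenerating to an ALT-diagram carrying the lone $\lob$ (resp. $\tb$) once the boundary vertical edges are exhausted; in the second $D'$ is a shorter P-diagram of the same type. In all cases $D'$ is a product of simple diagrams, by the induction hypothesis or by the ALT-case, and therefore so is $D=D_iD'$.

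Now let $D$ be a PZZ-diagram, so $\ab(D)=1$, $\mathtt{l},\mathtt{r}\ge 1$ and $|\mathtt{l}-\mathtt{r}|\le 1$ by Remark~\ref{alternatingdec}(b). Here the unique non-propagating north edge $e=\{i,i+1\}$ is undecorated, hence a simple edge of the form $(\smile)$, and I would peel it off by a cut-and-paste-type move whose neighbor is the topmost extremal feature of $D$: by Definition~\ref{definition:PZZcases} this is the highest $\tb$ on the rightmost propagating edge in types $\langle\tb\tb\rangle,\langle\tb\bullet\rangle$, and the highest $\lob$ in types $\langle\bullet\tb\rangle,\langle\bullet\bullet\rangle$. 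Cutting this neighbor and reattaching its free ends to $i$ and $i+1$, while sliding the extremal decoration onto the new edges, yields an admissible diagram $D'$ with $D=D_iD'$. A direct inspection of the four length formulas in Definition~\ref{deflength}(3)---where peeling the top feature advances the cap position $(i,j)$ by one step and, at the turning points, flips the type $\langle\_\ \_\rangle$ and decreases $\mathtt{l}$ or $\mathtt{r}$---shows that $\ell(D')=\ell(D)-1$. The diagram $D'$ is again a PZZ-diagram, except when the move exhausts the last $\lob$ or the last $\tb$, forcing $\mathtt{l}=0$ or $\mathtt{r}=0$; then $D'$ is an LP- or RP-diagram with $\ab(D')=1$, which is a P-diagram handled in the previous paragraph. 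The induction closes.

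The two factorizations borrowed from Proposition~\ref{unicityP} and from the ALT cut and paste are routine once their construction is granted; the real obstacle is the PZZ-diagram case. There the entire difficulty is the height bookkeeping encoded by the four types of Definition~\ref{definition:PZZcases}: one must verify that peeling the topmost feature respects the vertical-position rule (D4), that the resulting $D'$ is admissible and of the predicted PZZ- or P-type, and---checking each of the four length formulas together with the boundary transitions $\mathtt{l}\to 0$ and $\mathtt{r}\to 0$---that the length drops by exactly one. Establishing this length-decreasing, admissibility-preserving peeling for PZZ-diagrams is the crux of the argument.
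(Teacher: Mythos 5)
Your overall architecture (strong induction on $\ell(D)$, with the P-case riding on Proposition~\ref{unicityP} and the PZZ-case handled by a new peeling move) is a genuinely different route from the paper, which proves Lemma~\ref{ad1} with no induction at all: it simply exhibits, for every possible configuration of a PZZ-diagram or P-diagram with $\ab(D)=1$ (Figure~\ref{factoriz}), an explicit closed-form product of simple diagrams built from $D_{z_1}=D_{n+1}D_n\cdots D_2$ and $D_{z_2}=D_0D_1\cdots D_n$. Since the lemma only asserts \emph{existence} of a factorization (no uniqueness, no length bookkeeping), the explicit route is available and sidesteps everything you identify as difficult. Against that benchmark, your proposal has a genuine gap, and it is exactly the one you flag yourself: the PZZ peeling move is never actually constructed or verified. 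The cut-and-paste machinery of Section~\ref{sec:simple edges-cp} is defined only for suitable edges of ALT-diagrams (Definition~\ref{def:suitable}), and Lemma~\ref{lemma:cplength} and Proposition~\ref{unicity} are proved only in that setting; there is no analogue for PZZ-diagrams, where the vertical positions of the $\lob$'s and $\tb$'s carry essential information and the rule (D4) constrains how decorations may move. Asserting that "a direct inspection of the four length formulas" shows $\ell(D')=\ell(D)-1$, including the type-flips at turning points and the transitions $\mathtt{l}\to 0$, $\mathtt{r}\to 0$, is not a proof: that case analysis, together with admissibility of $D'$ and the identity $D=D_iD'$, is the entire content of the PZZ case, and your write-up explicitly defers it ("the crux of the argument"). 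A referee would count the lemma as unproved in this state.

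There is also a concrete factual error in the PZZ paragraph: you claim the unique non-propagating north edge of a PZZ-diagram "is undecorated, hence a simple edge of the form $(\smile)$". This fails at the walls. When the zigzag word turns at the left wall (a reduced expression beginning with $s_0s_1\cdots$), the cap $\{1,2\}$ carries a $\bullet$, i.e.\ it has the form $(\stackrel{\bullet}{\smile})$ and the peeled generator must be $D_0$, not $D_1$; symmetrically at the right wall the cap $\{n+1,n+2\}$ carries a $\circ$, form $(\stackrel{\circ}{\smile})$, peeling $D_{n+1}$. This is precisely why the statement of Lemma~\ref{lemma:cplength} and the paper's Figure~\ref{factoriz} distinguish these cases (the parameters $u,v\in\{\emptyset,\bullet\}$ and the exponents $\alpha,\beta$ in the paper's proof encode exactly whether the cap and cup are decorated). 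Finally, be careful with your opening assumption that "every ALT-diagram is already known to be a product of simple diagrams": in the paper this is only established inside Proposition~\ref{factorization}, whose induction may in principle pass through the very diagrams Lemma~\ref{ad1} covers, so you cannot cite it as a prior fact; within your single strong induction this is repairable, but only once the PZZ peeling step is actually proved.
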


\begin{proof}
To prove this result, we provide an explicit factorization for any PZZ-diagram and any P-diagram with $\ab(D)=1$. Let $D$ be a PZZ-diagram, $\{i, i+1\}$ the non-propagating edge on the north face and $\{j', (j+1)'\}$ the non-propagating edge on the south face. 
Define $z_1:=s_0s_1s_2\cdots s_n$ and $z_2:=s_{n+2}s_{n+1}s_n\cdots s_2$. By direct computations, $D$ can be factorized as one of the following products depending on the number $\mathtt{r}$ of $\loc$ and on its type, as defined in Definition~\ref{defPZZ}:
\begin{align*}
    \langle\circ \, \circ\rangle \ &D_0^{\alpha}(D_i^{1-\alpha}D_{i+1}\cdots D_n)^{1-\delta_{i, n+1}}(D_{z_2}D_{z_1})^{\mathtt{r}-1}D_{n+2}D_{n+1}(D_n \cdots D_j^{1-\beta}D_0^{\beta})^{1-\delta_{j,n+1}}\\
    \langle\bullet \, \circ\rangle \ &D_{n+2}^{\eta}(D_i^{1-\eta}D_{i-1}\cdots D_2)^{1-\delta_{i,1}}D_{z_1}(D_{z_2}D_{z_1})^{\mathtt{r}-1}D_{n+2}D_{n+1}(D_n \cdots D_j^{1-\beta}D_0^{\beta})^{1-\delta_{j,n+1}}\\
    \langle\circ \, \bullet\rangle \ &D_0^{\alpha}(D_i^{1-\alpha}D_{i+1}\cdots D_n)^{1-\delta_{i, n+1}}(D_{z_2}D_{z_1})^{\mathtt{r}-1}D_{z_2}D_{0}D_{1}(D_2 \cdots D_j^{1-\epsilon}D_{n+2}^{\epsilon})^{1-\delta_{j,1}}\\
    \langle\bullet \, \bullet\rangle \ &D_{n+2}^{\eta}(D_i^{1-\eta}D_{i-1}\cdots D_2)^{1-\delta_{i,1}}(D_{z_1}D_{z_2})^{\mathtt{r}}D_{0}D_{1}(D_2 \cdots D_j^{1-\epsilon}D_{n+2}^{\epsilon})^{1-\delta_{j,1}}
\end{align*}
where $\delta_{x,y}$ is the Kronecker delta and the parameters $\alpha, \beta, \eta, \epsilon$ are defined as follows:
\begin{itemize}
    \item[] $\alpha =1$ if $i=1$ and $\{1,2\}$ is decorated with a $\bullet$, $\alpha=0$ otherwise;
    \item[] $\beta=1$ if $j=1$ and $\{1',2'\}$ is decorated with a $\bullet$, $\beta=0$ otherwise;
    \item[] $\eta =1$ if $i=n+1$ and $\{n+1,n+2\}$ is decorated with a $\circ$, $\eta=0$ otherwise;
    \item[] $\epsilon = 1$ if $j=n+1$ and $\{(n+1)', (n+2)'\}$ is decorated with a $\circ$, $\epsilon=0$ otherwise.
\end{itemize}

The case where $D$ is a P-diagram with $\ab(D)=1$ is easier. Using the same notation as above, we have that if $D$ is a LP-diagram, then a factorization is of the form
$$D_{n+2}^{\eta}D_i^{1-\eta}D_{i-1}\cdots D_2D_0D_1D_2 \cdots D_j^{1-\epsilon}D_{n+2}^{\epsilon}.$$

If $D$ is a RP-diagram, it has a factorization of the form
$$D_0^{\alpha}D_i^{1-\alpha}D_{i+1}\cdots D_nD_{n+1}D_{n+2}D_n \cdots D_j^{1-\beta}D_0^{\beta}.$$

\end{proof}

\begin{Example}
 Let $D$ be the PZZ-diagram of type $\langle \bullet \, \bullet\rangle$ on the left of Figure~\ref{PZZexample1}, then $D = D_2(D_{z_1}D_{z_2})^2D_0D_1D_2D_3D_4D_6$, where $D_{z_1}=D_0D_1D_2D_3D_4$ and $D_{z_2}=D_6D_5D_4D_3D_2$. 
\end{Example}

\begin{Lemma}\label{PZZheaps}
Let $w\in \FC(\Dn)$. If $w\in(\PZZ)$ then $D_w$ is a PZZ-diagram.
\end{Lemma}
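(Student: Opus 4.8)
The plan is to compute $D_w$ directly from a canonical reduced expression of $w$ and to read off from the resulting diagram the three data that characterise a PZZ-diagram in Definition~\ref{defPZZ}, namely $\ab(D_w)=1$ together with $\mathtt{l},\mathtt{r}\geq 1$. First I would fix the reduced expression provided by the classification of $(\PZZ)$ heaps: since $w\in(\PZZ)$, it admits a reduced word $\mathbf{w}$ that is a finite factor of the infinite zigzag $\left(s_0s_1 s_2\cdots s_{n}s_{n+1}s_{n}\cdots s_2\right)^\infty$ containing at least one factor $s_{n}s_{n+1}s_{n}$ and at least one factor $s_0s_1$. I would then cut $\mathbf{w}$ into maximal increasing and decreasing runs, i.e. into the sweeps $D_{z_2}=D_0D_1D_2\cdots D_n$ and $D_{z_1}=D_{n+1}D_n\cdots D_3D_2$ of Lemma~\ref{ad1}, together with the two partial sweeps occurring at the ends of the factor.

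The heart of the argument is a local computation at each turning point of the zigzag. Away from the decorations, an increasing run followed by its return through $s_{n+1}$ behaves like a product of undecorated Temperley--Lieb cup--cap generators, so it carries a single cup on the north face and a single cap on the south face through the whole diagram; I would check by induction on the number of sweeps that every intermediate product has exactly one non-propagating edge per face, whence $\ab(D_w)=1$. At the left turning point the factor $s_0s_1$ produces $D_0D_1$, whose two first-column edges close up a loop carrying the $L$-decoration $\bullet$; by condition (A1) such a $\lob$ loop is admissible and is never removed by the relations of Figure~\ref{rel}, so each occurrence of $s_0s_1$ contributes a $\lob$, giving $\mathtt{l}\geq 1$. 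At the right turning point the factor $s_{n}s_{n+1}s_{n}$, governed by relation (b4) rather than by a braid relation, deposits one $\tb$ on the unique rightmost propagating edge, so each such factor contributes to $\mathtt{r}$, giving $\mathtt{r}\geq 1$. Since $w$ contains at least one factor of each kind, both counts are positive, and the constraint $|\mathtt{l}-\mathtt{r}|\le 1$ is automatic from Remark~\ref{alternatingdec}(b).

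Putting these together, $D_w$ is an admissible diagram with $\ab(D_w)=1$ and $\mathtt{l},\mathtt{r}\geq 1$, which is exactly Definition~\ref{defPZZ}, so $D_w$ is a PZZ-diagram. In practice I expect this to coincide with matching $\mathbf{w}$, block by block, against one of the factorizations listed on the right of Figure~\ref{factoriz}, whose products are by construction the PZZ-diagrams depicted on the left; the independence of $D_w$ from the chosen reduced expression then closes the argument.

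The main obstacle is the decoration bookkeeping. One must verify that along the entire sequence of concatenations the $\lob$ loops persist, while the surviving $R$-decorations all accumulate on the rightmost propagating edge as $\tb$'s and do not, for instance, appear as $\lot$ loops collapsing to the scalar $2$ via (b4). One must also treat separately the two partial sweeps at the ends of the factor, since these are responsible for the possible discrepancy $\mathtt{l}\neq\mathtt{r}$ and for distinguishing the four types $\langle\tb\tb\rangle,\langle\tb\bullet\rangle,\langle\bullet\tb\rangle,\langle\bullet\,\bullet\rangle$ of Definition~\ref{definition:PZZcases}.
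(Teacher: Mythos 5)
Your proposal is correct and takes essentially the same approach as the paper: both compute $D_w$ directly from the canonical zigzag reduced expression, decomposed into the sweeps $D_{z_1}$ and $D_{z_2}$ of Lemma~\ref{ad1}, and read off the single cup and cap (so $\ab(D_w)=1$), the $\lob$ produced by each occurrence of $s_0s_1$, and the $\tb$ deposited on the rightmost propagating edge by each flanked occurrence of $s_{n+1}$. The paper simply carries out this computation explicitly for one representative case (obtaining a diagram of type $\langle\tb\,\tb\rangle$ with $\mathtt{l}=k$ and $\mathtt{r}=k+1$) and declares the other cases very similar, which is precisely the case analysis your discussion of the two partial end-sweeps covers.
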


\begin{proof}
The proof follows from a direct computation. We detail only one specific case being the others very similar. By Definition \ref{family}, we can assume that $w$ has a reduced expression of the form
\begin{equation*}
    \mathbf{w}=s_is_{i+1}\cdots s_n (s_{n+1}s_{n+2}s_n\cdots s_2s_0s_1s_2\cdots s_n)^h s_{n+1}s_{n+2}s_n \cdots s_j,
\end{equation*}
with $1\le i,j < n+1$. Let $z_1$ and $z_2$ as defined in Lemma \ref{ad1}, then $$D_w=D_iD_{i+1}\cdots D_n (D_{z_2} D_{z_1})^h D_{n+1}D_{n+2}D_n \cdots D_j$$ is a PZZ-diagram of type $\langle \circ \, \circ \rangle$, with $\{i,i+1\}$ as unique non-propagating edge in the north face, $\{j',(j+1)'\}$ as unique non-propagating edge in the south face, with $h+1$ occurrences of $\loc$ and $h$ of $\lob$. 
\end{proof}

Before proving the following results, we need to introduce a particular representation of a diagram $D_w$ for some $w\in \FC$, by explicitly describing its factorization into simple diagrams geometrically. 

\begin{Definition}\label{rem:simplerepr}
Let $\mathbf{w}=s_{i_1}s_{i_2}\cdots s_{i_m}$ be a reduced expression of $w\in W$. We call \textit{concrete simple representation} of $D_{\bf w}$, the concrete diagram $\mathcal{C}(D_{\bf w})$ that results from stacking standard representations of the simple diagrams $D_{i_1}, \ldots, D_{i_m}$ from top to bottom without deforming any of their edges or applying any relation among the decorations (see Figure \ref{simplerepres}). Recall the description of a standard representation of a diagram given in Section \ref{sec:undecorated}, after Figure \ref{7box}.
\end{Definition}

\begin{figure}[hbtp]
\centering
\includegraphics[scale=0.5]{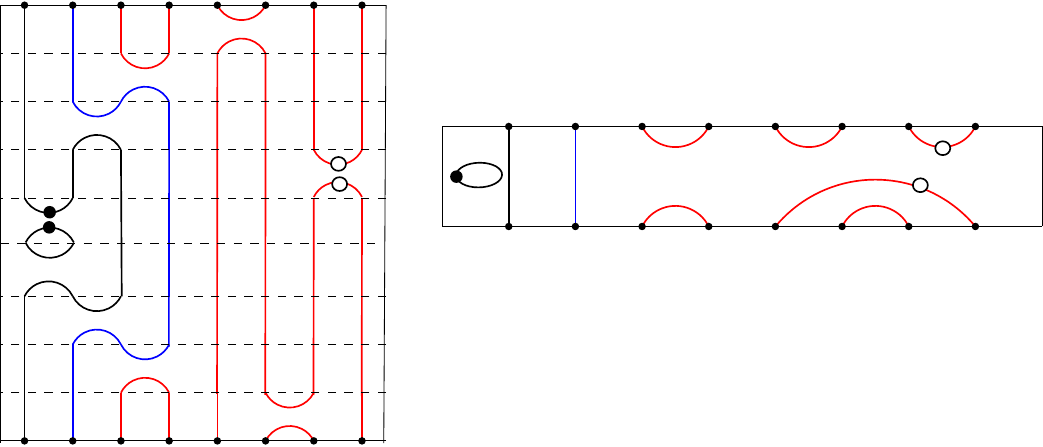}
\caption{The concrete simple representation $\mathcal{C}(D_{\bf w})$ with $\mathbf{w}=s_5s_3s_2s_8s_0s_1s_2s_3s_6$ in $\FC(\widetilde{D}_8)$ and its associated pseudo diagram $D_w$.}
\label{simplerepres}
\end{figure}

\begin{Lemma}\label{lemma:pattern}
    The element $w\in \FC(\Dn)$ if and only if, for any ${\bf w}\in \mathcal{R}(w)$, $\mathcal{C}(D_{\bf w})$ does not contain the patterns in Figure \ref{fig:patterns}.
\end{Lemma}

\begin{figure}[h]
    \centering
    \includegraphics[scale = 0.5]{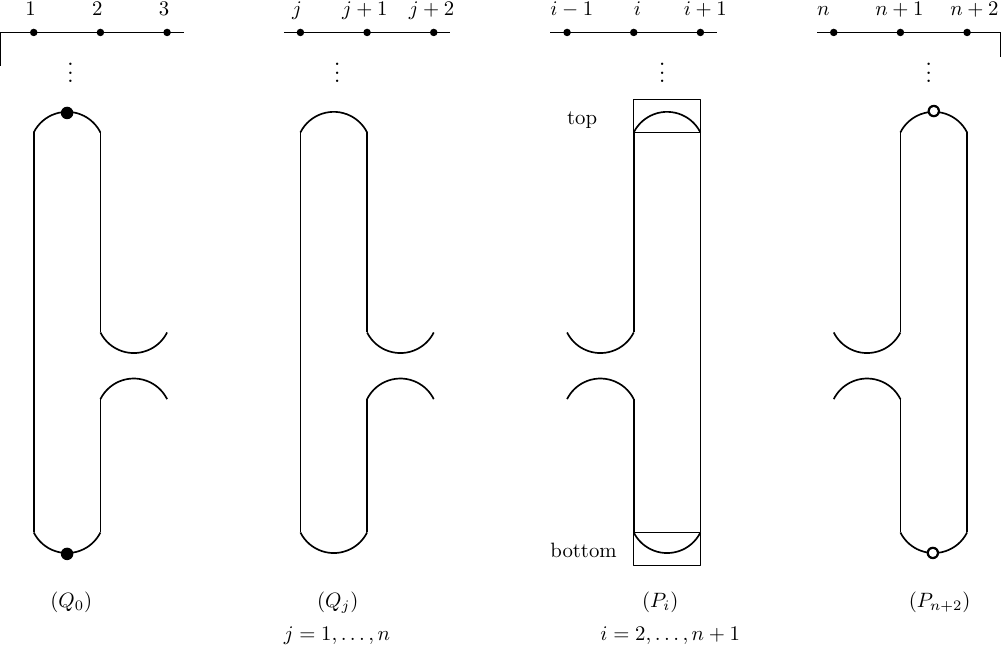}
    \caption{Forbidden patterns in a concrete simple representation of a FC element. }
    \label{fig:patterns}
\end{figure}

\begin{proof}
    Let $w\notin \FC(\Dn)$, then there exists a reduced expression $\bf w$ of $w$ containing the factor $s_i s_j s_i$ for some $s_i,s_j$ adjacent nodes in the Coxeter graph (Figure \ref{BDtilde}, left). Hence, $D_i D_j D_i$ in $D_{\bf w}$ gives rise to one of the patterns in Figure \ref{fig:patterns} in $\mathcal{C}(D_{\bf w})$.
    Vice versa, let $w\in \FC(\Dn)$ and let ${\bf w}\in \mathcal{R}(w)$ such that $\mathcal{C}(D_{\bf w})$ contains the pattern $P_i$, with $i=2,\ldots, n+2$. By construction, the diagrams contributing to the top and bottom part of the pattern $P_i$ (respectively, $P_{n+2}$) are equal to $D_i$ (respectively, $D_{n+2})$, there is one occurrence of $D_{i-1}$ (respectively, $D_n$) between the two $D_i$ (respectively, $D_{n+2}$), and there is no  occurrence of $D_{i+1}$ between them. Hence any other simple diagram appearing between these two occurrences of $D_i$ (respectively, $D_{n+2}$) in the factorization of $D_{\bf w}$ commutes with them. Therefore there exists a reduced expression ${\bf w'}$ in the same commutation class of ${\bf w}$ in which the factor $s_is_{i-1}s_i$ (respectively, $s_{n+2}s_ns_{n+2}$) appears and so $w$ is not FC. A similar argument holds for the patterns of the form $Q_j$, $j=0,\ldots,n$.
\end{proof} 

\begin{Corollary}\label{rem:loops}
Let $w\in \FC(\Dn)$, then
\begin{itemize}
\item[(1)] $D_w$ contains a $\lob$ (respectively a $\loc$) if and only if there exists a reduced expression of $w$ containing a factor $s_0s_1$ (respectively $s_{n+1}s_{n+2}$).
\item[(2)] $D_w$ contains a $\lob$ and the vertical edge $\{1,1'\}$ (respectively a $\loc$ and the vertical edge $\{n+2, (n+2)'\}$) if and only if there exists a reduced expression of $w$ containing a factor $s_2s_0s_1s_2$ (respectively $s_n s_{n+1}s_{n+2}s_n$).
\end{itemize}    
\end{Corollary}

\begin{proof} We only prove the direct implications since the reverses are clear.

\noindent (1) Consider any concrete simple representation $\mathcal{C}(D_{\bf w})$. The only configurations giving a loop decorated with a $\bullet$ are those depicted in Figure \ref{fig:loop_pattern} or those obtained from them by displacing the $\bullet$ inside the highlighted boxes, here $i \leq n$. The first case is impossible by Lemma \ref{lemma:pattern}. From the other it is easy to derive a reduced expression containing the factor $s_0s_1$, by exchanging the positions of the diagrams between $D_0$ and $D_1$. An analogous argument works for $\loc$.

\begin{figure}
    \centering
    \includegraphics[scale=0.5]{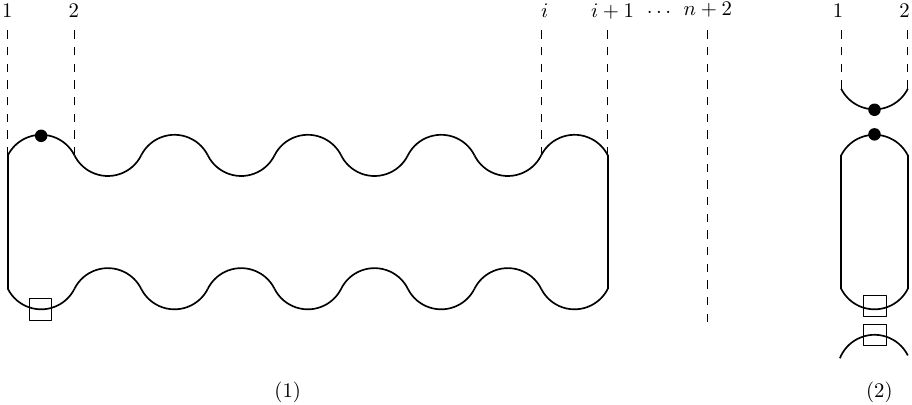}
    \caption{}
    \label{fig:loop_pattern}
\end{figure}

\noindent (2) By the previous point, there exists a reduced expression ${\bf w}$ such that $D_{\bf w}$ contains the factor $D_0D_1$. The associated $\mathcal{C}(D_{\bf w})$ has a path from node $1$ to node $1'$ if and only if there are two occurrences of $D_2$, one before and one after $D_0D_1$, otherwise, for topological reasons, in $D_{\bf w}$ there would appear a pattern of type $(P_i)$. An analogous argument works for $\loc$ and for the edge $\{n+2, (n+2)'\}$.
\end{proof}

\begin{Proposition}\label{prop:northface}
    Let $w\in \FC(\Dn)$, then 
    \begin{itemize}
        \item[(1)] $D_w$ has a simple edge $e=\{i, i+1\}$ of the form $(\smile)$ on the north face if and only if there exists a reduced expression of $w$ starting with $s_i$;
        \item[(2)] $D_w$ has a simple edge of the form $(\stackrel{\bullet}{\smile})$ (respectively, $(\stackrel{\circ}{\smile})$) on the north face if and only if there exists a reduced expression of $w$ starting with $s_0$ (respectively, $s_{n+2}$) and no reduced expression of $w$ starts with $s_1$ (respectively, $s_{n+1}$).    
    \end{itemize}
\end{Proposition}

\begin{proof}

 Let $\mathbf{w}\in \mathcal{R}(w)$ and consider the concrete simple representation $\mathcal{C}(D_{\bf w})$. The necessary conditions for both (1) and (2) are clear. Now we show the other implications. 
 
 For (1), an undecorated closed path from node $i$ to node $i+1$ can be obtained either from the stacking of the simple diagrams as in Figure \ref{fig:FCrepres}(4), or in a way that the path starting from $i$ changes direction at least once from left to right or vice versa, reaching $i+1$. The latter case is not possible since the path would contain one of the forbidden patterns in Figure \ref{fig:patterns}: three examples are shown in Figure \ref{fig:FCrepres}(1)-(3), where diagrams that commute are depicted at the same height. Therefore the only possibility is the first one in which $D_i$ commutes with all the simple diagrams above it. Hence we can find a reduced expression of $w$ starting with $s_i$. 
 
 For (2), if $e=\{1,2\}$ (respectively, $\{n+1,n+2\}$) is of the form $(\stackrel{\bullet}{\smile})$ (respectively, $(\stackrel{\circ}{\smile})$), the same argument as above implies that it can only be obtained from the stacking of the simple diagrams as in Figure \ref{fig:FCrepres}(4), where the highlighted box is filled with a $\bullet$ (respectively, $\circ$). Moreover, in $D_w$ there cannot be any $\lob$ (respectively, $\loc$), otherwise by the relation (r3) (respectively, (r4)) in Figure \ref{rel}, the simple edge would not be decorated. By Corollary \ref{rem:loops}, no reduced expression of $w$ can start with $s_1$ (respectively, $s_{n+1}$).

\begin{figure}[h]
    \centering
    \includegraphics[scale=0.6]{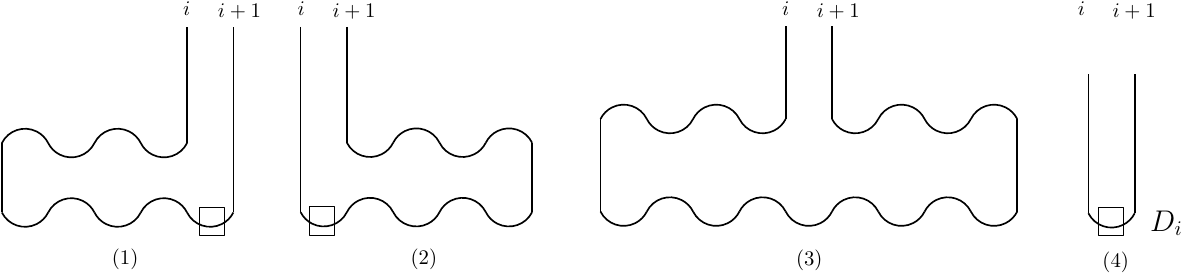}
    \caption{}
    \label{fig:FCrepres}
\end{figure}
    
\end{proof}

Recall the definitions of the (PZZ) heaps $H_{Z_1}$ and $H_{Z_2}$ given in Remark \ref{rem:pzz-heaps}, and the definitions of the PZZ-diagrams $Z_1$ and $Z_2$ given in Figure \ref{degeneratePZZ}.

\begin{Corollary} \label{cor:degeneratePZZ}
Let $w\in \FC(\Dn)$ and $H(w)$ be its FC heap, then
\begin{itemize}
    \item[(1)] $H(w)=H_{Z_1}$ if and only if $D_w=Z_1$;
    \item[(2)] $H(w)=H_{Z_2}$ if and only if $D_w=Z_2$.
\end{itemize}

\end{Corollary}

\begin{proof}
 We only prove (1) since (2) follows from (1) and $Z_2=F(Z_1)$. 
 
   If $H(w)=H_{Z_1}$, then a reduced expression of $w$ is $s_0s_1s_2\cdots s_{n+1}s_{n+2}$, from which $D_w=D_0D_1D_2\cdots D_{n+1}D_{n+2}=Z_1$.
   
 Now suppose $w\in \FC(\Dn)$, with $D_{w}=Z_1$. Since one $\lob$ and one $\loc$ occur in $Z_1$, by Corollary \ref{rem:loops} there exists a reduced expression containing both factors $s_0s_1$ and $s_{n+1}s_{n+2}$ once and, since $\lob$ is higher than $\loc$, then $s_0s_1$ is on the left of $s_{n+1}s_{n+2}$. We consider a reduced expression
 ${\bf w}=a s_0s_1 b s_{n+1}s_{n+2}c$ with $a$ and $c$ factors of minimal length.
    By minimality, $a=s_k s_{k-1}\cdots s_3 s_2$, and it is easy to check that if $a\neq \emptyset$ then the unique non-propagating edge on the north face of $D_{w}$ is different from $\{1,2\}$, against the hypothesis. Hence $a=\emptyset$ and similarly also $c = \emptyset$, so ${\bf w}=s_0s_1bs_{n+1}s_{n+2}$.

Now we want to show that $b=s_2s_3\cdots s_n$. Let $b=s_jb'$. If $j\neq 2$ there exists a reduced expression of $w$ starting with $s_j$ and so, by Proposition \ref{prop:northface}, $\{j, j+1\}$ is a non-propagating edge on the north face of $Z_1$, against the hypothesis. By iterating this argument one can show that $b=s_2s_3\cdots s_{k-1}s_k$, and $k$ has to be equal to $n$, otherwise $\{n+1, n+2\}$ is a non-propagating edge on the north face of $Z_1$. Therefore, ${\bf w}=s_0s_1s_2\cdots s_n s_{n+1}s_{n+2}$ and so $H(w)=H_{Z_1}$.

   \end{proof}

\begin{Lemma}\label{Pheaps}
Let $w\in \FC(\Dn)$. If $w\in$ $(\LP)$ (respectively $(\RP)$, $(\LRP)$) then $D_w$ is a P-diagram of type LP (respectively RP, LRP). 
\end{Lemma}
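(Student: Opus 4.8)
The plan is to prove the three cases LP, RP, LRP in parallel with the analogous computation used for Lemma~\ref{PZZheaps}, exploiting the classification of FC heaps in Definition~\ref{family} together with the explicit factorizations already recorded in Lemma~\ref{ad1}. The key observation is that the map $w \mapsto D_w$ is defined via any reduced expression, so for each family I would first pin down a canonical reduced expression of $w$ dictated by the shape of its heap $H(w)$, then compute the corresponding product of simple diagrams $D_w = D_{i_1}\cdots D_{i_k}$ and check directly against Definition~\ref{defP} that the resulting admissible diagram has the defining features of the claimed P-type.

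For the type (LP) case, I would use that $w \in (\LP)$ has a reduced expression of the form $P_{\leftarrow}(s_{j_\ell})$-type on its left together with an alternating portion on its right, governed by conditions LP(1)--LP(3). First I would write $\mathbf{w} = s_{j_\ell}s_{j_\ell-1}\cdots s_2 s_0 s_1 s_2 \cdots s_{j_\ell}\,\mathbf{u}$, where $\mathbf{u}$ is an alternating word in the generators $s_{j_\ell}, \ldots, s_{n+1}$ encoding $H(w)_{\{\rightarrow s_{j_\ell}\}}$ with the two $s_{j_\ell}$-elements identified. The left factor produces, via the braid/commutation behavior of the simple diagrams $D_i$ captured by relations (b1)--(b4), exactly a single $\lob$ together with $j_\ell - 1$ initial undecorated vertical edges, while the right alternating factor yields the inner alternating diagram $\widehat{D}$. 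Matching these against Definition~\ref{defP}(LP) gives that $D_w$ is of type LP. The RP case is symmetric, with $P_{\rightarrow}(s_{j_r})$ producing a single $\tb$ on $\{n+2,(n+2)'\}$ and $n+2-j_r$ terminal vertical edges; here condition RP(4) is exactly what guarantees we land in (RP) and not in (PZZ), consistently with the remark following Definition~\ref{family}.

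For the type (LRP) case I would combine both computations: a reduced expression built from $P_{\leftarrow}(s_{j_\ell})$ on the left, $P_{\rightarrow}(s_{j_r})$ on the right, and the alternating middle portion $H_{\{s_{j_\ell},\ldots,s_{j_r}\}}$ in between. Since $2 \le j_\ell < j_r \le n$ and by LRP(1) there are no intervening $s_{j_\ell+1}$- or $s_{j_r-1}$-elements between the paired generators, the left part contributes the single $\lob$ with $j_\ell - 1$ initial vertical edges and the right part the single $\tb$ on $\{n+2,(n+2)'\}$ with $n+2-j_r$ terminal vertical edges, exactly as required; the condition $j_r - j_\ell > 2$ forces $\ab(D)>1$. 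I expect the main obstacle to be the bookkeeping that the products of simple diagrams $D_i$ genuinely produce the claimed single $\lob$ and single $\tb$ rather than extra loops or decorations, i.e.\ verifying that relations (b1)--(b4) collapse the peak factors correctly and that the alternation condition (A4) is respected at the boundary edges $\{j_\ell-1,(j_\ell-1)'\}$ and $\{j_r+1,(j_r+1)'\}$. Since this is the same mechanism already exhibited in the single worked case of Lemma~\ref{PZZheaps}, I would present one representative family (say LP) in full and indicate that RP and LRP follow by the symmetric and combined arguments respectively.
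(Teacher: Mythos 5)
Your overall architecture coincides with the paper's (extract the peak from the heap classification of Definition~\ref{family}, compute the corresponding product of simple diagrams, and match the result against Definition~\ref{defP}), but the canonical reduced expression you build everything on does not exist in general. You write $\mathbf{w}=s_{j_\ell}s_{j_\ell-1}\cdots s_2s_0s_1s_2\cdots s_{j_\ell}\,\mathbf{u}$ with $\mathbf{u}$ a word in $s_{j_\ell},\ldots,s_{n+1}$. However, condition LP(2) forbids $s_{j_\ell+1}$-elements only \emph{between} the two $s_{j_\ell}$-elements of the peak: they may perfectly well lie \emph{above} the peak in $H(w)$, and since $s_{j_\ell+1}$ does not commute with $s_{j_\ell}$ they cannot be pushed past it. Concretely, $w=s_{j_\ell+1}\,s_{j_\ell}s_{j_\ell-1}\cdots s_2s_0s_1s_2\cdots s_{j_\ell}$ lies in $(\LP)$, yet its unique maximal heap element is $s_{j_\ell+1}$, so \emph{every} reduced expression begins with $s_{j_\ell+1}$ and none begins with the peak. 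The paper is careful on precisely this point: it only claims that some reduced expression contains the peak as a contiguous factor, $\mathbf{w}=\mathbf{a}\,(\mathrm{peak})\,\mathbf{b}$, with $\mathbf{a},\mathbf{b}$ involving only generators $s_r$, $r\geq j_\ell+1$ (LP(1) excludes further occurrences of $s_r$ with $r\leq j_\ell$). That weaker statement still yields the single $\lob$ and the $j_\ell-1$ initial undecorated vertical edges, but your clean two-block decomposition ``peak factor, then alternating factor producing $\widehat{D}$'' is unavailable as stated.

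More substantively, you never verify the clause of Definition~\ref{defP}(LP) that the edge leaving node $j_\ell$ is \emph{not} an undecorated vertical edge; this is the only genuinely nontrivial check and is where the paper's proof does its actual work, by cases on the occurrences of $s_{j_\ell+1}$ in $\mathbf{w}$: a single occurrence produces the edge $\{j_\ell,j_\ell+1\}$ or $\{j'_\ell,(j_\ell+1)'\}$ in $D_w$, while two occurrences force (by the LP heap conditions) an occurrence of $s_{j_\ell+2}$, from which one deduces that either an edge leaving $j_\ell$ or $j'_\ell$ is non-propagating, or $\{j_\ell,j'_\ell\}$ occurs decorated with a $\tb$. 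Your ``matching against Definition~\ref{defP}'' silently assumes this. Finally, in the LRP case your claim that ``$j_r-j_\ell>2$ forces $\ab(D)>1$'' reverses the paper's implication (Definition~\ref{defP} records that $\ab(D)>1$ implies $j_r-j_\ell>2$), and $j_r-j_\ell>2$ is nowhere guaranteed by the heap family, which only requires $j_\ell<j_r$; the small-gap configurations are instead excluded by LRP(3) (the heap is not a pseudo zigzag), so that step should be an appeal to LRP(3), in the same spirit as your correct use of RP(4) in the RP case. With the factorization weakened to the paper's ``contains the factor'' form and the boundary-edge verification supplied, your argument becomes the paper's proof.
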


\begin{proof}
Assume that $w\in$ (LP). By Definition \ref{family}, a reduced expression $\mathbf{w}$ of $w$ contains the factor $s_{j_{\ell}}s_{j_{\ell} -1}\cdots s_2s_0s_1s_2\cdots s_{j_{\ell} -1} s_{j_{\ell}}$. The corresponding product $$D_{j_{\ell}}D_{j_{\ell} -1}\cdots D_2D_0D_1D_2\cdots D_{j_{\ell} -1} D_{j_{\ell}}$$ is a diagram with the leftmost $j_{\ell} -1$ edges vertical and undecorated and with a unique $\lob$. Since there are no other occurrences of $s_r$ with $r\leq j_l$ in $\mathbf{w}$, then such vertical edges appear in $D_w$ too. Furthermore, if $s_{j_{\ell}+1}$ occurs once in $\mathbf{w}$ then $D_w$ contains either the edge $\{j_{\ell}, j_{\ell} +1\}$ or  $\{j'_{\ell}, (j_{\ell} +1)'\}$. If $\mathbf{w}$ contains two occurrences of $s_{j_{\ell}+1}$ then, by LP definition, there must be also an occurrence of $s_{j_{\ell}+2}$ between them, then it can be easily proved that either at least one of the edges leaving $j_{\ell}$ and $j'_{\ell}$ is a non-propagating edge or $D_w$ has the vertical edge $\{j_{\ell}, j'_{\ell}\}$ decorated with a $\circ$.
Hence, by Definition \ref{defP}, $D_w$ is a P-diagram of type LP. Analogous arguments can be applied for the two other cases.
\end{proof}

\begin{Lemma}\label{ALTheaps}
Let $w\in \FC(\Dn)$. If $w\in (\ALT)$ then $D_w$ is an ALT-diagram.
\end{Lemma}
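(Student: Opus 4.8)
The plan is to argue directly from the alternating structure of $H(w)$ and then to conclude via Definition~\ref{defALT}, which characterizes ALT-diagrams by exclusion. First I would fix a reduced expression $\mathbf{w}=s_{i_1}\cdots s_{i_k}$ of $w$ and build $D_w=D_{i_1}\cdots D_{i_k}$ by reading the heap $H(w)$ from top to bottom, exactly as in the computations of Lemmas~\ref{PZZheaps} and~\ref{Pheaps}. Because $w\in(\ALT)$, for every pair of non commuting generators $s,t$ the chain $H(w)_{\{s,t\}}$ alternates, and the only $1$-elements that may occur more than once are $s_0$ and $s_1$, again alternating. The first task is to check that $D_w$ is admissible. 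Here I would use the alternation to verify (A1)--(A5) of Definition~\ref{definition-admissible}: the $\bullet$'s (produced by $D_0,D_1$) and the $\tb$'s (produced by $D_{n+1}$) are laid down in the alternating blocks required by (A2), (A4), (A4$'$) and (A5), while the parity of the number of $\tb$'s and $\bullet$'s and the placement of the at most two $\circ$'s follow from counting the occurrences of $s_{n+1}$ and from the dammed/undammed dichotomy, giving (A2) and (A3).

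The step I expect to be the main obstacle is the loop condition (A1): one must show that at most a single $\lob$ is created and that no forbidden mixture of loops appears. A $\lob$ is produced only by a complete left turnaround $s_{j}s_{j-1}\cdots s_2\,s_0 s_1\,s_2\cdots s_{j-1}s_{j}$ inside $\mathbf{w}$, and the constraint that repeated $1$-elements are $s_0,s_1$ alternating, combined with the alternation of each $H(w)_{\{s_i,s_{i+1}\}}$, should bound the number of such turnarounds and pin down the single allowed $\lob$. Tracking precisely how the simple diagrams compose so that exactly these loops and no spurious ones arise is the real combinatorial content; I would organize it as an induction on $k$, adding one generator at a time and checking that each multiplication either extends an existing edge, opens an admissible alternating block, or closes at most the one permitted loop.

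Finally I would rule out the two competing diagram types. Both a PZZ-diagram (Definition~\ref{defPZZ}, where $\ab(D_w)=1$ and $\mathtt{l},\mathtt{r}\ge 1$) and a P-diagram (Definition~\ref{defP}, with a maximal run of leading or trailing undecorated vertical edges terminated by a non-vertical edge, together with a single $\lob$ or a $\tb$ on $\{n+2,(n+2)'\}$) carry a \emph{complete} boundary peak or zigzag in their shape. Such a signature forces the corresponding subheap $P_{\leftarrow}(s_{j_\ell})$, $P_{\rightarrow}(s_{j_r})$, or a zigzag factor, to sit inside $H(w)$. A direct check, however, shows these configurations are never alternating: for instance $P_{\leftarrow}(s_j)$ contains the chain $H_{\{s_{j-1},s_j\}}=s_j s_{j-1}s_{j-1}s_j$ and $P_{\rightarrow}(s_j)$ contains $H_{\{s_j,s_{j+1}\}}=s_j s_{j+1}s_{j+1}s_j$, neither of which alternates, and a genuine zigzag similarly produces an adjacent repeated label between consecutive periods. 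Hence none of these can embed in the alternating heap $H(w)$, so $D_w$ carries no peak or zigzag signature and is therefore neither a P- nor a PZZ-diagram. The care needed to establish that such a diagrammatic signature really does come from a complete peak subheap (the converse direction to Lemmas~\ref{PZZheaps} and~\ref{Pheaps}) is the part I would write out most carefully; once it is in place, Definition~\ref{defALT} yields that $D_w$ is an ALT-diagram.
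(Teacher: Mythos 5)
Your exclusion step is, at its core, the same argument the paper uses. The paper's proof also proceeds by assuming $D_w$ is a P- or a PZZ-diagram and converting the diagrammatic signature back into a forbidden factor of a reduced expression: the $\tb$ on the vertical edge $\{n+2,(n+2)'\}$ can only arise from the product $D_nD_{n+1}D_n$, and the single $\lob$ together with the vertical edge $\{1,1'\}$ only from $D_2D_0D_1D_2$, forcing the factors $s_ns_{n+1}s_n$ or $s_2s_0s_1s_2$, which are incompatible with $w\in(\ALT)$; the PZZ case is handled identically. So the step you flag as needing the most care (``the converse direction'') is exactly where the paper's content lies, and it works with a much lighter signature than you propose: one never reconstructs the whole peak subheap $P_{\leftarrow}(s_{j_\ell})$ or $P_{\rightarrow}(s_{j_r})$ inside $H(w)$, only a short central factor. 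Note also that the paper does not verify admissibility of $D_w$ inside this lemma at all (it is implicit in the direct computations of Lemmas~\ref{PZZheaps} and~\ref{Pheaps} and in the product construction), so your (A1)--(A5) verification is additional work rather than a divergence.

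Two concrete inaccuracies in your sketch should be repaired. First, your ``direct check'' that peaks are never alternating fails precisely at the boundary cases that drive the paper's argument: for $P_{\rightarrow}(s_n)=H(s_ns_{n+1}s_n)$ the chain $H_{\{s_n,s_{n+1}\}}=s_n\,s_{n+1}\,s_n$ \emph{does} alternate, and the violation sits instead in $H_{\{s_{n-1},s_n\}}$, which has two adjacent $s_n$'s because RP(2) forbids an $s_{n-1}$-element between them; dually, $P_{\leftarrow}(s_2)=H(s_2s_0s_1s_2)$ has $H_{\{s_1,s_2\}}=s_2\,s_1\,s_2$ alternating, and one must use LP(2) and the chain $H_{\{s_2,s_3\}}$. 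Since the RP and PZZ cases both reduce to the factor $s_ns_{n+1}s_n$, citing only the chains $H_{\{s_j,s_{j+1}\}}$ and $H_{\{s_{j-1},s_j\}}$ as you do leaves the key case uncovered. Second, your mechanism for condition (A1) is wrong: a $\lob$ is \emph{not} produced only by a complete left turnaround $s_j\cdots s_2s_0s_1s_2\cdots s_j$. Already $D_0D_1$ closes a loop carrying a single $\bullet$, and for instance $w=s_2s_0s_1s_3s_2\in(\ALT)$ yields a $D_w$ containing a $\lob$; in fact a complete turnaround can never occur in an $(\ALT)$ element (its two $s_j$'s would make the chain $H_{\{s_j,s_{j+1}\}}$ non-alternating), so within this lemma the $\lob$'s come exclusively from glued $\{1,2\}$-cups of $D_0$ and $D_1$, and the bound in (A1) must be argued from the alternation of the $1$-elements $s_0,s_1$, not from counting turnarounds. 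Your inductive framework can absorb this correction, but as stated that step would fail.
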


\begin{proof}
Suppose that $D_w$ is a P-diagram, then by Corollary \ref{rem:loops} there exists a reduced expression of $w$ that contains at least one of the factors $s_2s_0s_1s_2$ and $s_n s_{n+1}s_{n+2} s_n$ that is not possible since $w\in$ (ALT). 

 Now suppose $D_w$ is a PZZ-diagram. By the argument right above for the P-diagrams, $D_w$ cannot contain the vertical edges $\{1,1'\}$ and $\{n+2, (n+2)'\}$. 
 Hence the non-propagating edges of $D_w$ are 
either $\{1, 2\}$ and $\{(n+1)', (n+2)'\}$ or $\{n+1, n+2\}$ and $\{1', 2'\}$. If $\mathtt{l}\ge 2$ (respectively $\mathtt{r}\ge 2$), by Corollary~\ref{rem:loops} a reduced expression of $w$ has a factor $b=s_0s_1 a_1 s_{n+1}s_{n+2} a_2 s_0s_1$ (respectively $b'=s_{n+1}s_{n+2} a_1 s_{0}s_{1} a_2 s_{n+1}s_{n+2}$), and by the argument in the proof of Corollary~\ref{cor:degeneratePZZ}, $b$ (respectively $b'$) must contain the factor $s_ns_{n+1}s_{n+2}s_n$ (respectively $s_2s_0s_1s_2$). Both cases cannot occur since $w\in (\ALT)$. 

Finally, suppose $\mathtt{l}=\mathtt{r}=1$. By Corollary \ref{cor:degeneratePZZ}, $D_w\neq Z_1, Z_2$, since by Definition \ref{family}, $H_{Z_1},H_{Z_2}\notin (\ALT)$, hence there are two cases left. If $\{1, 2\}$ (respectively $\{(n+1)', (n+2)'\}$) is the non-propagating edge then $D_w$ is of type $\langle \circ \, \bullet \rangle$ (respectively $\langle \bullet \, \circ \rangle$) and $w$ must have both factors $s_2s_0s_1s_2$ and $s_ns_{n+1}s_{n+2}s_n$ which are not allowed. 

Therefore $D_w$ has to be an ALT-diagram.
 
\end{proof}

Now we prove that the length of $D_w$ as defined in Definition \ref{deflength} is consistent with the length of $w$.
 
 Denote by $\mu_k(e)$ the number of intersections of $e$ with the vertical line $k+1/2$ in $\mathcal{C}(D_{\mathbf{w}})$ and set 
  \begin{equation}
      \mu_k(\mathcal{C}(D_{\mathbf{w}})):=\sum_{e} \mu_k(e) \quad \mbox{and} \quad \mu(e):=\sum_{k=1}^{n+1} \mu_k(e),
  \end{equation}
   where the first sum runs over all the edges of $\mathcal{C}(D_{\mathbf{w}})$. Now, given two different reduced expressions $\mathbf{w}$ and $\mathbf{w'}$ of $w$ we have $\mathcal{C}(D_{\mathbf{w}})\neq \mathcal{C}(D_{\mathbf{w'}})$, while $\mu_k(\mathcal{C}(D_{\mathbf{w}}))=\mu_k(\mathcal{C}(D_{\mathbf{w'}}))$ for any $k$. Moreover, 
   \begin{equation}\label{eq:mu-length}
       \sum_{k=1}^{n+1}\mu_k(\mathcal{C}(D_{\mathbf{w}}))=\sum_e \mu(e) = 2\ell(w).
   \end{equation}
   Now we are ready to prove the following result.

\begin{Proposition}\label{fclength}
Let $w\in \FC(\Dn)$, then $\ell(w)=\ell(D_w)$.
\end{Proposition}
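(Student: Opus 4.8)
The plan is to induct on the Coxeter length $\ell(w)$, branching according to the family of $w$; by Lemmas~\ref{PZZheaps}, \ref{Pheaps} and \ref{ALTheaps} the type of the diagram $D_w$ is then determined, so the three cases $w\in(\PZZ)$, $w$ a $P$-element, and $w\in(\ALT)$ can be matched with the corresponding clauses of Definition~\ref{deflength}. The base case $\ell(w)=0$ is $w=e$, $D_w=I$, and both lengths vanish. If $w\in(\PZZ)$ no induction is needed: as in the proof of Lemma~\ref{PZZheaps} one fixes the explicit reduced word realizing the pseudo-zigzag, reads off the integers $i,j$, the numbers $\mathtt{l},\mathtt{r}$ of $\lob$ and of $\tb$ on the rightmost propagating edge, and the type $\langle\_\ \_\rangle$, and then both sides are closed expressions in these data that one checks to agree in each of the four types of Definition~\ref{definition:PZZcases}. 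For the word displayed in Lemma~\ref{PZZheaps}, for instance, counting letters gives $\ell(w)=2n+3-i-j+k(2n+1)$, while the diagram is of type $\langle\tb\tb\rangle$ with $\mathtt{l}=k$, $\mathtt{r}=k+1$, so Definition~\ref{deflength}(3) yields $\ell(D_w)=3-i-j+(\mathtt{l}+\mathtt{r}+1)n+\mathtt{l}=2n+3-i-j+k(2n+1)$, as required.

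Suppose next that $w$ is a $P$-element, so $D_w$ is a $P$-diagram by Lemma~\ref{Pheaps}. Here I would use Proposition~\ref{unicityP}: it produces a simple edge $e=\{i,i+1\}$ and a unique admissible diagram $D'$ with $\ell(D')=\ell(D_w)-1$ and $D_w=D_iD'$ (or $D_w=D_0D'$ when $e$ is of the form $(\stackrel{\bullet}{\smile})$). Granting, as in the core case below, that $D'=D_{s_iw}$ (resp.\ $D_{s_0w}$) for the corresponding left descent, the inductive hypothesis gives $\ell(D')=\ell(s_iw)=\ell(w)-1$, whence $\ell(D_w)=\ell(D')+1=\ell(w)$. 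Alternatively one may argue directly from the recursive formula Definition~\ref{deflength}(4): splitting a reduced word of $w$ into its peak factor(s) and the inner factor $\hat w$ realizing the inner alternating diagram $\widehat{D_w}$ of Definition~\ref{def:altportion} (the extra $\circ$ decorations change neither $\nu$ nor $\weight$, hence not $\ell(\widehat{D_w})$), the alternating case applied to $\widehat{D_w}$ gives $\ell(\hat w)=\ell(\widehat{D_w})$, and one checks that the additive constants $2j_\ell-1$, $2n-2j_r+4$, $2n+2j_\ell-2j_r+3$ account exactly for the letters of the peak factor(s) not already counted by $\hat w$, the shifts encoding the vertical edges shared between the peak(s) and $\widehat{D_w}$.

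The core is the case $w\in(\ALT)$, where $D_w$ is an ALT-diagram by Lemma~\ref{ALTheaps}. If $D_w\neq I$, Proposition~\ref{dimsimple} together with Definition~\ref{def:suitable} produces a suitable edge $e=\{i,i+1\}$, and Lemma~\ref{lemma:cplength} gives $D_w=D_i\,\cp_e(D_w)$ (or $D_0\,\cp_e(D_w)$ if $e$ is of the form $(\stackrel{\bullet}{\smile})$) together with $\ell(\cp_e(D_w))=\ell(D_w)-1$. The key claim is that $s_i$ is a genuine left descent of $w$ and that $\cp_e(D_w)=D_{s_iw}$; granting this, the inductive hypothesis yields $\ell(\cp_e(D_w))=\ell(s_iw)=\ell(w)-1$ and therefore $\ell(D_w)=\ell(w)$, closing the induction. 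As an independent check, the counting route of Remark~\ref{rem:simplerepr} should give the same conclusion: the concrete simple representation $D_{\mathbf w}$ meets the vertical lines $i+1/2$ in $2\ell(w)$ points, and it suffices to see that this number equals the number of crossings in the stretched configuration of $D_w$, namely $\nu(D_w)+2\weight(D_w)=2\ell(D_w)$, the in-box crossings removed during reduction being recovered as excursions of decorated edges to the walls (weighted by $\weight$) together with the loop contributions $\weight(\lob)=1$ and $\weight(\lot)=n+1$.

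The main obstacle is exactly the italicised claim in the ALT case, and its analogue in the $P$-case: to identify $\cp_e(D_w)$ with $D_{s_iw}$ \emph{without} invoking the injectivity of $\tilde{\theta}_B$, which is the theorem this proposition is meant to support and hence is not yet available. Concretely one must show that cut and paste at the simple edge created by a left descent inverts left multiplication by the simple diagram $D_i$: the neighbor of $e$ in $D_w$ is precisely the edge obtained by gluing the south cap of $D_i$ onto $D_{s_iw}$, and redistributing its decorations as in Figure~\ref{cp-general} returns exactly the decorations of $D_{s_iw}$. Full commutativity is essential here, guaranteeing that the descent edge is simple and suitable, that the alternating pattern of decorations is respected, and that no braid-type relation ever lowers a coefficient; the verification is a case analysis paralleling the tables in Lemma~\ref{lemma:cplength}. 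In the counting route the same difficulty reappears as the conservation of the stretched crossing number under the reduction of $D_{\mathbf w}$ to its irreducible standard form.
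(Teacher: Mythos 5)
Your PZZ case is exactly the paper's computation, and the route you relegate to an ``independent check'' at the end of your ALT discussion is in fact the paper's entire proof. The paper never inducts: for $w\in(\ALT)$ it works directly in the concrete simple representation $D_{\mathbf w}$ of Remark~\ref{rem:simplerepr}, proves the per-edge identity $\frac{1}{2}(\mu(e)-j+i)=\weight(e)$ of~\eqref{eqn:mu} by a case analysis over the decoration patterns of~\eqref{weight1}--\eqref{weight2}, sums over edges to get $\sum\mu(e)=2\weight(D_w)+\nu(D_w)$, and concludes via $\sum\mu(e)=2\ell(w)$, which holds because $w$ is FC. The P cases are then handled by splitting the crossing count at the vertical edge $\{j_\ell-1,(j_\ell-1)'\}$ and reducing to the ALT computation for $\widehat{D}_w$ plus the explicit constants --- essentially your ``alternative'' sketch for the P case. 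So the correct and complete argument is present in your proposal only as an unexecuted sketch: the substantive content (verifying~\eqref{eqn:mu} for each decoration type, where the choices $\weight(\lob)=1$, $\weight(\lot)=n+1$ earn their keep, and the conservation of the crossing number you worry about at the very end) is exactly what the paper writes out and what your text omits.

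Your primary route --- induction via the cut-and-paste operation --- has the gap you yourself flag, and it is not merely technical: as stated it is circular. To identify $\cp_e(D_w)$ with $D_{s_iw}$ you would invoke the uniqueness of Propositions~\ref{unicity} and~\ref{unicityP}; but that uniqueness is among admissible $D'$ satisfying $D_w=D_iD'$ \emph{and} $\ell(D')=\ell(D_w)-1$, while the inductive hypothesis only gives $\ell(D_{s_iw})=\ell(w)-1$. Converting the latter into $\ell(D_w)-1$ is precisely the statement $\ell(D_w)=\ell(w)$ being proved. (The proofs of those propositions show that a wrong shape forces $\ell(D')\geq\ell(D_w)+1$, but they do not show that every admissible redistribution of decorations other than the cp one has length at least $\ell(D_w)$, so $D_{s_iw}$ cannot be fed into the uniqueness statement without the goal in hand.) You would also need that a suitable simple edge $\{i,i+1\}$ forces $s_i$ to be a left descent of $w$; the paper does use such a claim (``Since $\{i,i+1\}$ is a simple edge, there exists a reduced expression of $w_1$ starting with $s_i$''), but only inside the proof of Theorem~\ref{injec}, after Proposition~\ref{fclength} is available. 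Indeed the paper's logical order is: \ref{fclength} first, by pure counting; then Proposition~\ref{factorization} and Theorem~\ref{injec}, both of which quote \ref{fclength}, consume the cp machinery of \ref{lemma:cplength}, \ref{unicity} and \ref{unicityP}. Your proposal inverts this order. A non-circular repair would be a constructive verification that cutting and pasting at the cap created by stacking $D_i$ on $D_{s_iw}$ returns exactly $D_{s_iw}$ --- the case analysis you gesture at --- but that is unexecuted, so as written the argument does not close.
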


\begin{proof}
 First assume that $D_w$ is a PZZ-diagram. Then by Lemma \ref{PZZheaps}, $w\in (\PZZ)$. As in the proof of Lemma \ref{PZZheaps},  we can suppose that  $$\mathbf{w}=s_is_{i+1}\cdots s_n (s_{n+1}s_{n+2}s_n\cdots s_2s_0s_1s_2\cdots s_n)^h s_{n+1}s_{n+2}s_n \cdots s_j,$$
 is a reduced expression of $w$, with $1\le i,j<n+1$. We have that  $$\ell(w)= 2n-i-j+4+(2n+2)h=3-i+n-j+(2h+1)(n+1)=\ell(D_w)$$ since $\mathtt{l}=h$ and $\mathtt{r}=h+1$ (Definition \ref{deflength}). 

Now assume $D_w$ is an ALT-diagram and consider a decorated edge $e$ in $\mathcal{C}(D_{\bf w})$. We now show that 
\begin{equation}\label{eqn:mu}
    \frac{1}{2}(\mu(e)-j+i)= \weight(e),
\end{equation}
 where $e=\{i^*, j^*\}$ and $i\leq j$.

If $e$ is a non-propagating edge $\{i, j\}$ on the north face decorated with both $\bullet$ and $\circ$ (see Figure \ref{ALTedge}(A)), then $\weight(e)=n+1 -j+i$.  Furthermore,  $\frac{1}{2}(\mu(e)-j+i)=\frac{1}{2}(\sum_{k=1}^{i-1}\mu_k(e) + \sum_{k=j}^{n+1}\mu_k(e))= i-1 + n+1 -j +1= n+1-j+i= \weight(e)$. Now if $e$ is a propagating edge $\{i, j'\}$ with $i<j$ and is decorated with a $\bullet$ followed by $(\circ \, \bullet)^k$, where $k\in \mathbb{N}_0$ (see Figure \ref{ALTedge}(B)), then, one can verify that $\frac{1}{2}(\mu(e) - j+i)=i-1 + k(n+1)=  \weight(e)$. Similar computations show that \eqref{eqn:mu} holds if $e$ is decorated in a different way. Moreover, when $e=\{i, j'\}$ with $j<i$, \eqref{eqn:mu} holds with $i$ and $j$ exchanged.

\begin{figure}[hbtp]
\centering
\includegraphics[scale=0.6]{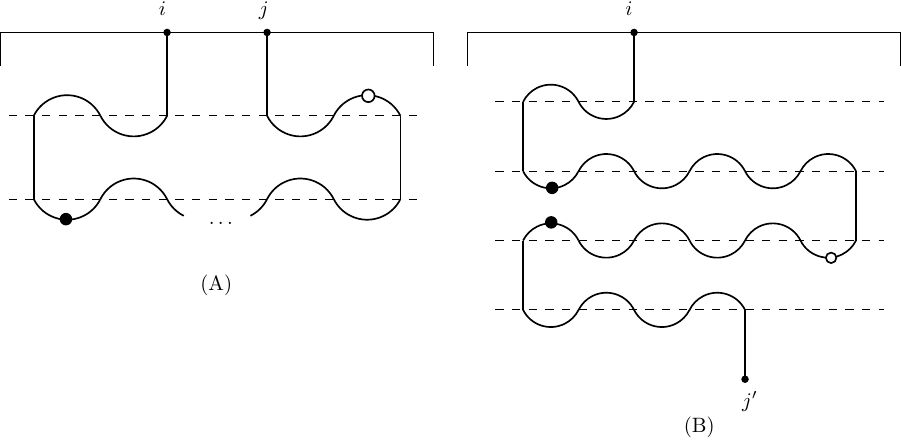}
\caption{Edges in concrete simple representations.}
\label{ALTedge}
\end{figure}

A quick check shows that $\sum_e \mu(e)=2\sum_e \weight(e) + \sum_{h=1}^{n+1}\nu_h(D_w)$, where the sum on the left hand side runs over all the edges of $\mathcal{C}(D_{\bf w})$, while the first summand on the right hand side runs over all the edges of $D_w$, loop edges included. From Definition~\ref{deflength}, Definition~\ref{rem:simplerepr} and \eqref{eq:mu-length}, it follows that $\ell(D_w)= \frac{1}{2} \sum_e \mu(e)=\ell(w)$. 

Finally, assume that $D_w$ is a LP-diagram. 
Then a direct computation shows that $\sum_e \mu(e)$ is equal to $4 j_\ell-2$ plus the number of intersections between all the vertical lines and the edges starting and ending on the right of $\{j_\ell -1, (j_\ell -1)'\}$ in $\mathcal{C}(D_{\bf w})$, namely the part of $\mathcal{C}(D_{\bf w})$ corresponding to $\widehat{D_w}$. Since $D_{\bf w}$ is an ALT-diagram, this number is equal to $\ell(\widehat{D_w})$. This gives $\ell(w)=\frac{1}{2}\sum_e \mu(e)=2j_\ell -1+\ell(\widehat{D_w})=\ell(D_w)$ as in Definition~\ref{deflength}(d). A similar computation can be done for the other two cases RP and LRP, and this concludes the proof.
\end{proof}

In Figure \ref{simplerepres}, $4 j_\ell-2 = 10$ counts the intersections of the vertical lines with the black and blue edges of $D_{\bf w}$ while there are 8 intersections with the red edges, those starting and ending on the right of $\{2,2'\}$. The total number of intersections is $18=2\ell(w)$ as expected.

\begin{Proposition}\label{factorization}
Every admissible diagram $D$ is of the form $D_w$ where $w\in \FC(\Dn)$.
\end{Proposition}

\begin{proof}
If $D$ is a PZZ or a P-diagram with $\ab(D)=1$, the thesis follows from Lemma \ref{ad1}, observing that any factorization appearing in the proof of the lemma is associated to a FC element $w$, as classified in Definition \ref{family}. Hence $D=D_w$.

Assume that $D$ is an ALT-diagram or a P-diagram with $\ab(D)>1$. We proceed by induction on $\ell(D)$. If $\ell(D)=1$, then it easy to see that it must be $D=D_i$ for some $i\in\{0, 1 , \ldots, n+2\}$, and trivially $s_i\in \FC(\Dn)$.
Suppose the thesis holds for all diagrams of length $k$ and let $D$ be of length $k+1$. By Lemma \ref{unicity} and Lemma \ref{unicityP}, there exist an index $i\in \{0, 1 , \ldots, n+2\}$ and a unique admissible diagram $D'$ such that $D=D_iD'$ and $\ell(D')=k$. Hence $D'=D_{w'}$ with $w'\in \FC(\Dn)$ and $\ell(w')=k$. By letting $w:= s_i w'$, we claim that $w\in\FC(\Dn)$ with $\ell(w)=\ell(w')+1$, from which $D=D_w$ and the result follows. We now prove the claim. 

If $\ell(w)=\ell(w')-1$ then there exists a reduced expression of $w'$ starting with a $s_i$. Hence 
$D_iD_{w'}$ contains a $\delta$ factor, but $D$ admissible implies $D$ irreducible and so it cannot contain undecorated loops, hence $\ell(w)=\ell(w')+1$. If $w \not\in \FC(\Dn)$ then by Proposition~\ref{prop:caracterisation_fullycom} there exists a reduced expression of $w$ containing a braid relation. After applying all the possible relations between the simple diagrams listed after Figure \ref{simple}, we have $$D=D_iD_{w'}=\delta^p D_{w''}$$ with $w'' \in \FC(\Dn)$ and $\ell(w'')<\ell(w')+1$. If $p\neq 0$ we have a contradiction since $D$ is admissible. If $D=D_{w''}$ then $\ell(D)<k+1$ which is also in contrast with the hypothesis.

\end{proof}

\begin{Theorem}\label{thalgebra}
The $\Zd$-algebras $\mathcal{M}[\dbD]$ and $\DD$ are equal. Moreover, the set of admissible diagrams is a basis for $\DD$.
\end{Theorem}

\begin{proof}
We know that $\{b_w \mid w \in \FC(\Dn)\}$ is a basis of $\TL(\Dn)$, hence by Proposition~\ref{factorization} and the fact that the image of any $b_w$ is an admissible diagram (by Lemmas \ref{PZZheaps}, \ref{Pheaps} and \ref{ALTheaps})  it follows that $\mathcal{M}[\dbD]$ is a $\Zd$-subalgebra of $\PLRn$. Since the simple diagrams are in $\mathcal{M}[\dbD]$ and $\DD$ is the smallest algebra containing them, then the two algebras are equal. By Proposition \ref{prop:basis}, the set of admissible diagrams is a basis for $\mathcal{M}[\dbD]$, then the statement is proved.
\end{proof}

We can finally prove our main result.

\begin{Theorem}\label{injec}
The map $\tilde{\theta}_D: \TLD \rightarrow \DD$ is an algebra isomorphism. 
\end{Theorem}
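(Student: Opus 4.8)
The plan is to prove that $\tilde{\theta}_B$ is an algebra isomorphism by establishing injectivity, since surjectivity and the fact that it is an algebra homomorphism are already clear from the construction. The strategy rests entirely on a dimension/basis comparison made possible by the preceding results. Recall that $\{b_w \mid w \in \FC(\Bn)\}$ is the monomial basis of $\TLB$, and that $\tilde{\theta}_B$ sends $b_w$ to the diagram $D_w$. The key observation is that Lemmas~\ref{PZZheaps}, \ref{Pheaps} and \ref{ALTheaps} together show that each $D_w$ is an admissible diagram, while Proposition~\ref{factorization} shows conversely that every admissible diagram arises as some $D_w$ with $w \in \fc$. Combined with Theorem~\ref{thalgebra}, which identifies $\DB$ with $\mathcal{M}[\db]$ and states that the admissible diagrams form a basis of this algebra, this sets up a correspondence between the two bases.

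First I would establish that the assignment $w \mapsto D_w$ induces a well-defined bijection from $\FC(\Bn)$ onto the set $\db$ of admissible diagrams. Surjectivity is exactly Proposition~\ref{factorization}. For injectivity of this map of index sets, I would argue as follows: if $D_w = D_{w'}$ for two fully commutative elements $w, w'$, then in particular their lengths agree, so by Proposition~\ref{fclength} we have $\ell(w) = \ell(D_w) = \ell(D_{w'}) = \ell(w')$. The finer point is that the inductive factorization supplied by Lemma~\ref{unicity} and Lemma~\ref{unicityP} is \emph{canonical}: given an admissible diagram $D$ of positive length, there is a \emph{unique} admissible $D'$ with $\ell(D') = \ell(D) - 1$ together with a determined generator $D_i$ (or $D_0$) such that $D = D_i D'$. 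Peeling off one simple diagram at a time therefore reconstructs, from the diagram $D_w$ alone, a reduced word for a fully commutative element, and this word recovers $w$ itself. Hence $w \mapsto D_w$ is a bijection between $\FC(\Bn)$ and $\db$.

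With the bijection of index sets in hand, the conclusion is immediate. The homomorphism $\tilde{\theta}_B$ carries the basis $\{b_w \mid w \in \fc\}$ of $\TLB$ to the family $\{D_w \mid w \in \fc\}$, which by the bijection is precisely the set $\db$ of admissible diagrams, and this set is a basis of $\DB$ by Theorem~\ref{thalgebra}. A $\Zd$-algebra homomorphism that maps a basis bijectively onto a basis is an isomorphism; therefore $\tilde{\theta}_B$ is injective, and being already surjective, it is an algebra isomorphism.

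I expect the main obstacle to lie not in the final assembly, which is purely formal, but in making rigorous the claim that the factorization peeling off simple diagrams is truly canonical and terminates in a reconstruction of $w$. The uniqueness statements of Lemmas~\ref{unicity} and \ref{unicityP} guarantee uniqueness of the diagram $D'$ at each step, but one must check that the resulting sequence of generators, read off in order, is independent of any residual choices (for instance, the choice of suitable edge when several exist) up to commutation, so that it yields a well-defined element of $\FC(\Bn)$ rather than merely a word. This is where the full-commutativity hypothesis does its essential work: distinct suitable edges correspond to commuting generators, so the ambiguity in the peeling procedure is exactly the ambiguity among reduced expressions within a single commutation class, which by Definition~\ref{defi:FC} determines a unique $w$.
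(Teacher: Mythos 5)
Your overall strategy coincides with the paper's: both reduce the theorem to the injectivity of $w \mapsto D_w$ on $\fc$, using Proposition~\ref{fclength}, Proposition~\ref{factorization} and Theorem~\ref{thalgebra} to turn a basis-to-basis bijection into an isomorphism, with injectivity obtained from the uniqueness statements of Propositions~\ref{unicity} and~\ref{unicityP} (the paper phrases this as a minimal-counterexample induction on $\ell(D)$ rather than as a reconstruction). However, your write-up has a genuine gap in its case coverage. Your central claim --- that \emph{every} admissible diagram $D$ of positive length admits a unique factorization $D = D_iD'$ with $\ell(D') = \ell(D)-1$ --- is not what Propositions~\ref{unicity} and~\ref{unicityP} provide: the first applies only to ALT-diagrams and the second only to P-diagrams, so PZZ-diagrams are not covered by any peeling step. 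The paper disposes of the PZZ case \emph{separately, before} the induction: since the factorizations of PZZ-diagrams are explicitly known from Lemma~\ref{ad1}, one checks directly that distinct elements of $(\PZZ)$ yield distinct diagrams, and Lemmas~\ref{PZZheaps}, \ref{Pheaps} and~\ref{ALTheaps} guarantee the families do not mix, so the minimal counterexample may be assumed to lie in (ALT) or (P). Your proof needs the same carve-out.

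There is a second, subtler problem in the reconstruction step. Peeling simple diagrams off $D = D_w$ produces \emph{some} word $s_{i_1}s_{i_2}\cdots$ whose product of simple diagrams equals $D$; but concluding that this word ``recovers $w$ itself'' from $D_u = D_w$ is exactly the injectivity being proved, so as stated your argument is circular. The missing ingredient --- which the paper uses, and which you never state --- is a descent claim: the simple edge $\{i,i+1\}$ singled out by Propositions~\ref{unicity}/\ref{unicityP} forces $w$ to have a reduced expression beginning with $s_i$, whence $w = s_ia$ with $a \in \fc$, $D_w = D_iD_a$, and $D_a = D'$ by the \emph{uniqueness} of $D'$. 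In the paper's minimal-counterexample formulation one applies this single step to both $w_1$ and $w_2$, getting $D_a = D_b = D'$ of smaller length, hence $a = b$ and $w_1 = w_2$. Note that this structure also renders your closing worry about independence of the choice of suitable edge moot: no canonical full peeling is needed, only one common factorization step, so the discussion of commutation classes in your final paragraph is not where the difficulty lies --- the unproved descent claim is.
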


\begin{proof}
It is enough to show that $\tilde{\theta}_D$ is injective. Let $w_1, w_2$ be two distinct elements in $\FC(\Dn)$. By definition, $\tilde{\theta}_D(b_{w_1})=D_{w_1}$ and $\tilde{\theta}_D(b_{w_2})=D_{w_2}$. By Lemmas~\ref{PZZheaps}, \ref{Pheaps} and \ref{ALTheaps}, if $w_1$ and $w_2$ belong to two different families of heaps then $D_{w_1}\neq D_{w_2}$. 

If both $w_1$ and $w_2$ belong to (PZZ), it is easy to verify that $w_1\neq w_2$ implies $D_{w_1}\neq D_{w_2}$. In fact, either the endpoints of $H(w_1)$ and $H(w_2)$ are different, or they coincide but differ in the number or relative order of occurrences of $s_0s_1$ and $s_{n+1}s_{n+2}$. Hence, $D_{w_1}$ and $D_{w_2}$ either have different non-propagating edges, or differ in the number or relative vertical position of $\lob$ and $\loc$, respectively.
 
Now, suppose by contradiction that $\tilde{\theta}_B$ is not injective and let $w_1\neq w_2$ be such that $D_{w_1}=D_{w_2}=D$ with $D$ of minimal length. First note that by Proposition~\ref{fclength}, $\ell(w_1)=\ell(w_2)$ and, from above, 
that $w_1$ and $w_2$ are both in (ALT) or in (P).
By Propositions~\ref{unicity} and \ref{unicityP}, there exist an index $i\in \{0, 1, \ldots, n+2\}$ and a unique admissible diagram $D'$ such that $D_{w_1}=D_iD'$, with $\ell(D')=\ell(D)-1$. This factorization implies that $D_{w_1}$ has a simple edge of the form $(\stackrel{\bullet}{\smile})$ if $i=0$, $(\stackrel{\circ}{\smile})$ if $i=n+2$, and $(\smile)$ if $1\le i \le n+1$.  By Proposition~\ref{prop:northface}, it follows that there exists a reduced expression of $w_1$ starting with $s_i$. Hence $w_1=s_i a$ where $a\in \FC(\Dn)$, $\ell(a)=\ell(w_1)-1$ and $$D_{w_1}=D_i D_a,$$ with $D_a$ admissible. Then, $D_a=D'$ by the uniqueness of $D'$.
By the same argument   
$$D_{w_2}=D_i D_b,$$
with $w_2=s_ib$, $b \in \FC(\Dn)$, $\ell(b)=\ell(w_2)-1$ and $D_b=D'$.
Hence $D_a=D_b=D'$, and by the minimality of $\ell(D)$, we have that $a=b$ from which $w_1=w_2$.
\end{proof}

\begin{Example}
Consider the heaps depicted in Figure~\ref{heaps}. The image of the (ALT) element via $\tilde{\theta}_D$ is the diagram in Figure~\ref{ALTdiag}, the image of the (PZZ) element is in Figure~\ref{PZZdiag}, the image of the (LRP) element is in Figure~\ref{LRPdiag}.
\end{Example}

\section{Type $\widetilde{B}$}\label{Green}

In this section we extend to type $\Bn$ the previous results for type $\Dn$. In particular, we describe a new algebra of decorated diagrams called admissible, denoted by $\DB$, that turns out to be isomorphic to $\TL(\Bn)$. We list the relevant definitions and we state the main theorems without giving formal proofs, since they are similar to those in type $\widetilde{D}$. Following a suggestion of Richard Green~\cite{RG}, the injectivity of the map $\tilde{\theta}_B$
will be derived from our main result, Theorem~\ref{injec}.

From now on, by abuse of notation we will use same the symbols for elements in $\Bn$ and $\Dn$, in $\TL(\Bn)$ and $\TL(\Dn)$, and for diagrams in $\DB$ and $\DD$.
\medskip

To define admissible diagrams in type $\Bn$ we consider the algebra $\PLRb$ of decorated diagrams introduced in Section~\ref{sec:undecorated}, where the decorations are in the set $\Omega'=\{\bullet, \circ, \tb\}$, with $L=\{\bullet\}$ and $R=\{\circ, \tb\}$. By introducing the following set of reductions, 
\begin{figure}[hbtp]
\centering
\includegraphics[scale=0.6]{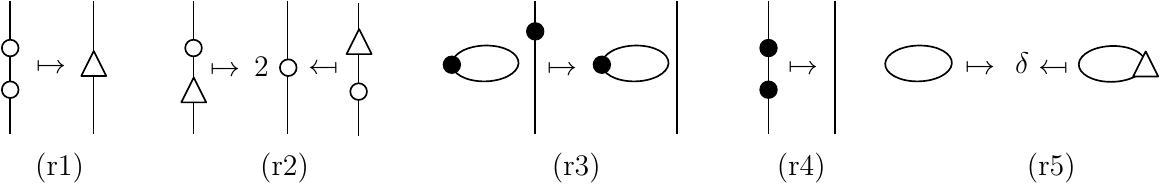}
\caption{The reductions of $\PLRk$.}
\label{relB}
\end{figure}
one can show, as in Section~\ref{sec:undecorated}, that the quotient algebra $\PLRkb$ of $\PLRb$ modulo the relations determined by the reductions in Figure~\ref{relB} is a $\Zd$-algebra having as a basis the set of irreducible decorated diagrams with decorations in $\Omega'$.

We can now state our main definition. 

\begin{Definition}\label{definition-admissible}
An irreducible diagram $D\in T_{n+2}^{LR}(\Omega')$ is called $\tilde{B}$-\textit{admissible} if it satisfies the following conditions:

\begin{itemize}

\item[(B1)] In $D$ loops are all of the same type, either $\lob$ or $\lot$. Moreover, if $\ab(D)>1$ then there might be at most one occurrence of $\lob$. 
    \begin{figure}[h]
	\centering
	\includegraphics[width=0.3\linewidth]{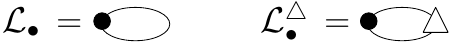}
	\caption{Allowable loops in $\tilde{B}$-admissible diagrams.}
	\label{loop-B}
\end{figure}

\end{itemize}

Assume $\ab(D)>1$.

\begin{itemize}
\item[(B2)] The total number of $\lot$ and of $\bullet$ on non-loop edges of $D$  must be even and the total number of $\circ$ is either zero or two.
\smallskip
\item[(B3)] If $D$ has no propagating edges, a $\circ$ must appear as last decoration on the non-propagating edges connected to the nodes $n+2$ and $(n+2)'$ respectively.
\smallskip
\item[(B4)] If $\textit{e}=\{1,1'\}$, then exactly one of the following three conditions holds:
\begin{itemize}
\item[(a)] \textit{e} is undecorated;
\item[(b)] \textit{e} is decorated by a single $\tb$;
\item[(c)] \textit{e} is decorated by an alternating sequence of $\bullet$ and $\tb$ and there are no loops.
\end{itemize}
\smallskip

\item[(B4)$'$]  If $e=\{n+2, (n+2)'\}$, then only one of the following conditions holds:
\begin{itemize}
\item[(a)$'$] $e$ is undecorated;
\item[(b)$'$] $e$ is decorated by a single $\tb$;
\item[(c)$'$] $e$ is decorated by an alternating sequence of $\bullet$ and $R$-decorations such that the first and the last decorations are $\circ$.
\end{itemize}
Moreover, if $e\neq \{n+2, (n+2)'\}$ is the edge  connected to $n+2$ (respectively, $(n+2)'$), then a unique $\circ$ occurs on $e$.
\end{itemize}

\smallskip
Assume $\ab(D)=1$.
\begin{itemize}
\item[(B5)] The western side of $D$ is equal to one of those depicted in Figure  \ref{west}, as required for type $\Dn$, see Definition~\ref{definition-admissible} (A4).
\smallskip

\noindent The eastern side of $D$ is equal to one of those depicted in Figure~\ref{est}, where ${\bf d}$ represents a sequence of blocks (possibly empty) such that each block is a single $\tb$. The $\circ$ decoration occurring on the propagating edge on the north (respectively, south) face has the highest (respectively, lowest) relative vertical position of all decorations on any propagating edge and $\lob$. The situation in Figure~\ref{est}(B) can only occur if no $\bullet$ is in $D$.
\end{itemize}

\begin{figure}[hbtp]
	\centering		
		\includegraphics[height=25mm, keepaspectratio]{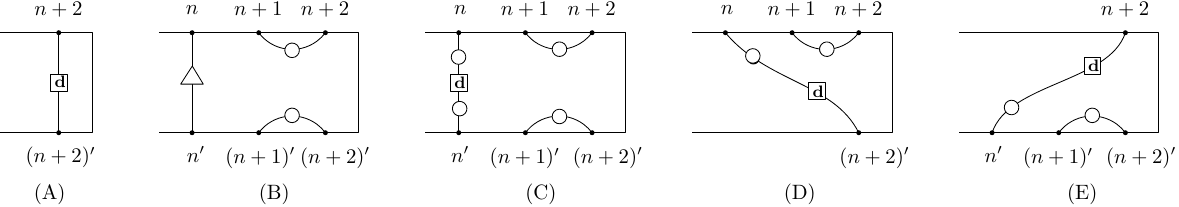}
		\caption{Eastern side of an admissible diagram with $\ab(D)=1$.}
	\label{est}
\end{figure}
\end{Definition}

As in Section~\ref{sec:undecorated}, one can show that the set $\db$ of admissible diagrams  spans the $\Zd$-submodule $\mathcal{M}[\db]$, which turns out to be a $\Zd$-subalgebra of $\PLRnb$. Moreover as in type $\Dn$, the set $\db$ splits in three nonempty disjoint families: the PZZ, the P and the ALT-diagrams.   

\begin{Definition} Let $D\in \db$.
\begin{itemize}
    \item[(PZZ)]  We say that $D$ is a \textit{PZZ-diagram} if $\ab(D)=1$ and $\mathtt{l},\mathtt{t}\geq 1$, where $\mathtt{l}$ denotes the number of $\lob$ and $\mathtt{t}$ the number of $\tb$ in $D$.
    \item[(P)] $D$ is a \textit{P-diagram} if it satisfies Definition~\ref{defP} where any $\loc$ is replaced by a $\tb$ on the vertical edge $\{n+2,(n+2)'\}$.
    \item[(ALT)] $D$ is an \textit{ALT-diagram} if $D$ is not a P-diagram or a PZZ-diagram.
\end{itemize}
\end{Definition}

To avoid further notation, we denote again by $D_0,\ldots,D_n, D_{n+1}$ the \textit{simple diagrams of type $\Bn$}, defined as follows. The first $n+1$ diagrams are equal to $D_0, \ldots, D_{n}$ of type $\Dn$, while $D_{n+1}$ is equal to $D_{n+2}$ of type $\Dn$, see Figure~\ref{simple}. 
\smallskip

Let $\DB$ be the $\Zd$-subalgebra of $\PLRnb$ generated as a unital algebra by the simple diagrams of type $\Bn$, then one can show the following result.

\begin{Theorem}\label{thalgebraB}
The $\Zd$-algebras $\mathcal{M}[\db]$ and $\DB$ are equal. Moreover, the set of admissible diagrams is a basis for $\DB$.
\end{Theorem}

Consider the $\Zd$-algebra homomorphism $\tilde{\theta}_B : \TL(\Bn) \rightarrow \DB$ defined as usual by $b_i \mapsto D_i$ for all $i=0, \ldots, n+1$. We will show that the injectivity of $\tilde{\theta}_B$ can be deduced from that of $\tilde{\theta}_D$ using the following maps.

\begin{Definition}  
Let  $w \in \FC(\Bn)$, the map $\phi:\FC(\Bn) \rightarrow \FC(\Dn)$ is defined as follows: 
\begin{itemize}
\item[a)] If no reduced expression of $w$ contains the factor $s_{n+1}s_{n}s_{n+1}$, consider any $\mathbf{w}=s_{i_1}\cdots s_{i_k} \in \mathcal{R}(w)$, then $\phi(w)$ is the element with a reduced expression obtained by replacing each $s_{n+1}$ in ${\bf w}$ by $s_{n+1} s_{n+2}$.

\item[b)] If there exists a reduced expression ${\bf w}$ containing a factor $s_{n+1}s_ns_{n+1}$, then $\phi(w)$ is the element with a reduced expression obtained by replacing in $\bf w$ the second occurrence of $s_{n+1}$ by $s_{n+2}$ in any of these factors. 
\end{itemize}

\end{Definition}

\begin{Definition}
    Let  $f_{\phi}:\TL(\Bn)\rightarrow\TL(\Dn)$ be the map defined by $f_{\phi}(b_w)=b_{\phi(w)}$  for any $w\in \FC(\Bn)$. 
\end{Definition}

\begin{Lemma}\label{rem:shape}\
\begin{enumerate}
\item The maps $\phi$ and $f_\phi$ are injective.   
\item If $w\in \FC(\Bn)$ then $sh(D_{\phi(w)})=sh(D_w)$.
\end{enumerate}
\end{Lemma}

\begin{proof}  
 The injectivity of $\phi$ follows directly from its definition, while that of $f_\phi$ from that of $\phi$ since $\{b_w \mid w \in \FC(\Bn)\}$ is a basis. Recall that in $sh(D_w)$ loops are not considered, so (2) comes from the definition of $\phi$ since the first $n+1$ simple diagrams of type $\Bn$ and $\Dn$ are all equal and $sh(D_{n+1})=sh(D_{n+2})$. 
\end{proof}

\begin{Definition}\label{def:g-function}
    Let  $g: \DB \rightarrow \DD $ be the map defined as follows. If $D\in \DB$, then $g(D)$ has the same shape of $D$, the same $L$-decorations of $D$ and the following $R$-decorations:
    \begin{itemize}
        \item[(a)] none, if $D$ has no $R$-decorations;
        \item[(b)] a $\loc$, if $D$ has only two $\circ$ decorations;
        \item[(c)] a $\circ$ in the same position of each $\tb$ in $D$, if $D$ is an ALT or LP-diagram containing an even number of $\tb$;
        \item[(d)] a $\circ$ in the same position of each $\tb$ in $D$ and one $\circ$ on the edge leaving $(n+2)'$ with lowest vertical position, if $D$ is an ALT or LP-diagram containing an odd number of $\tb$;
        \item[(e)] a $\loc$ in the same vertical position of the $\tb$ in $D$, if $D$ is a RP or LRP-diagram;
        \item[(f)] a $\loc$ in the same vertical position of each $\tb$ and an extra $\loc$ with highest (respectively lowest) vertical position if a $\circ$ occurs on the edge leaving $n+2$ (respectively $(n+2)'$) of $D$, if $D$ is a PZZ-diagram.
    \end{itemize}    
\end{Definition}

\begin{figure}[hbtp]
	\centering
    \includegraphics[scale=0.7]{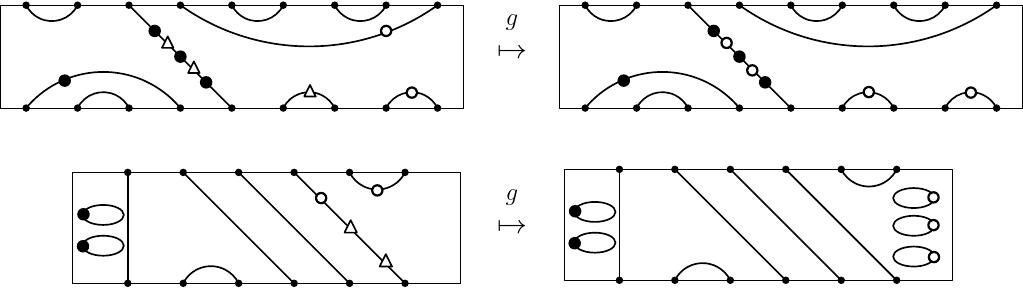} 
	\caption{Examples of the map $g$, cases (d) and (f) respectively.}
	\label{g-function}
\end{figure}

\begin{Theorem}
    The following diagram 
\begin{center}
    \begin{tikzcd} 
\TL(\Bn) \arrow{r}{\tilde{\theta}_B} \arrow{d}{f_\phi} &\DB \arrow{d}{g}\\ 
\TL(\Dn) \arrow{r}{\tilde{\theta}_D} & \DD 
\end{tikzcd}
 \end{center}
 commutes. Furthermore the map $\tilde{\theta}_B$ is an isomorphism.
\end{Theorem}

\begin{proof}
Let $w\in \FC(\Bn)$, by Lemma~\ref{rem:shape}, $D_w$ in $\DB$ and $D_{\phi(w)}$ in $\DD$ have the same shape. They also have exactly the same number of $\bullet$ decorations placed in the same positions. The differences between the two diagrams concern the loops and the $R$-decorations. More precisely, if an edge in $D_w$ contains a $\tb$, then the corresponding edge in $D_{\phi(w)}$ contains a $\circ$ in the same position. If the edges leaving $n+2$ and $(n+2)'$ contain a $\circ$ in $D_w$, then the corresponding edges in $D_{\phi(w)}$ may contain a $\circ$ or not in the same position, depending on the parity of the number of $\tb$. We now detail all the different cases.

    \begin{itemize}
        \item[(1)] If no $s_{n+1}$ occurs in $w$ then $\phi(w)=w$ in $\Dn$, then $D_{\phi(w)}$ has no $\circ$ decorations since no $D_{n+2}$ occurs. On the other hand, also $\tilde{\theta}_B(b_w)=D_w$ has no $R$-decorations, so $g(D_w)=D_w=D_{\phi(w)}$.

        \item[(2)] If a unique $s_{n+1}$ occurs in $w$, then $D_{n+1}D_{n+2}$ is a factor of $D_{\phi(w)}$ then it is easy to check that $D_{\phi(w)}$ has a single $\loc$ and no other $\circ$ decorations. On the other hand, $\tilde{\theta}_B(b_w)=D_w$ has two $\circ$ decorations, so, by Definition~\ref{def:g-function}(b), $g(D_w)=D_{\phi(w)}$.

        \item[(3)] If $w$ is in (ALT) or (LP) and it has $2k$ occurrences of $s_{n+1}$, then in any factorization of $D_{\phi(w)}$ there are $k$ occurrences of $D_{n+1}$ and $D_{n+2}$ that alternate. In particular, the nember of $\circ$ decorations is $2k$ and one of them is the last in the edge leaving $(n+2)'$. Note also that the first decoration in the edge leaving $n+2$ is not a $\circ$. On the other hand, in $D_w$ the first (respectively last) decoration  in the edge leaving $n+2$ (respectively $(n+2)'$) is a $\circ$ and the total number of $\tb$ decoration is $2k-1$. Hence, by Definition~\ref{def:g-function}(d), $g(D_w)=D_{\phi(w)}$.

        \item[(4)] If $w$ is in (ALT) or (LP) and it has $2k+1$ occurrences of $s_{n+1}$ then in $D_{\phi(w)}$ there are $k$ occurrences of $D_{n+2}$ and $k+1$ of $D_{n+1}$ that alternate. Again, in $D_{\phi(w)}$ the number of $\circ$ decorations is $2k$. On the other hand, $D_w$ has $2k$ occurrences of $\tb$ decorations. Hence, by Definition~\ref{def:g-function}(c), $g(D_w)=D_{\phi(w)}$. 

        \item[(5)] If $w\in (\RP)$ or $(\LRP)$, then the factor $D_{j_r}\cdots D_{n}D_{n+1}D_{n+2}D_n \cdots D_{j_r}$ appears in a factorization of $D_{\phi(w)}$ giving rise to a $\loc$ and to the undecorated vertical edge $\{n+2, (n+2)'\}$, see Corollary~\ref{rem:loops}. On the other hand, $D_w$ has the vertical edge $\{n+2, (n+2)'\}$ decorated with a $\tb$, then by Definition~\ref{def:g-function}(e), $g(D_w)=D_{\phi(w)}$. 

        \item[(6)] If $w \in (\PZZ)$, then $w$ does not contain factors of the form $s_{n+1}s_n s_{n+1}$. Hence $D_{\phi(w)}$ has as many $D_{n+1}D_{n+2}$ factors as the occurrences of $s_{n+1}$ in $w$, which means as many $\loc$ as the occurrences of $s_{n+1}$. At the same time, $D_w$ has as many $\tb$ as the occurrences of $s_{n}s_{n+1}s_n$ and a certain number of $\circ$ whenever $w$ starts or ends with a $s_{n+1}$. Then again, by Definition~\ref{def:g-function}(f), $g(D_w)=D_{\phi(w)}$.
    \end{itemize}
    The map $\tilde{\theta}_B$ is an isomorphism since $g\circ \tilde{\theta}_B=\tilde{\theta}_D \circ f_\phi$, the maps $g$, $f_\phi$ are injective and $\tilde{\theta}_D$ is an isomorphism by Theorem~\ref{injec}.
\end{proof}

 Arguments similar to those in this section can be developed and used to describe the diagrammatic representation of $\TL(\Cn)$ introduced by Ernst in \cite{ErnstDiagramI, ErnstDiagramII} and to prove its injectivity.

\section{Further developments}\label{sec:ultimo}

There are two possible developments in Kazhdan--Lusztig theory related to this paper.
First, it is known that $\TL(\Gamma)$ admits a special basis called \textit{canonical basis} or \textit{IC basis} for $\Gamma$ of arbitrary type, see \cite{Green-Loson}. We think that the diagram algebras introduced in this paper and in \cite{ErnstDiagramI} could be useful to provide explicit descriptions of such bases in affine types, as in \cite{Green-ICbasis}. 
Moreover, in~\cite{Green-trace} and \cite{Green_TLE}, the author showed that an analogous of the Jones's trace can be used to compute easily the leading coefficient $\mu(x,w)$ of the Kazdhan--Lusztig polynomial when $x$ and $w$ are in $\FC(\Gamma)$. It would be interesting to use the diagram calculus developed in this paper and in \cite{ErnstDiagramI} to construct explicitly such a trace when $\Gamma$ is of affine type.

\section*{Acknowledgements}
We would like to thank Richard Green for his interesting suggestions that gave rise to Section~\ref{Green} and the anonymous referees for their valuable comments. The first author thanks GNSAGA of INdAM for its partial support while visiting the second author.

\appendix
\section{Examples of factorizations}\label{A}

In what follows we give explicit applications of the cp-operation and the $K_L$-operation defined in Definition~\ref{definition:cutandpaste} and in the proof of Proposition~\ref{unicityP}, respectively. By iterating such operations we recover a factorization for each of the three diagrams. The first label in the pictures indicates the simple diagram associated to the simple edge used at each step while the one in brackets the corresponding local operation in Figure~\ref{cp-general}  that is applied.
\begin{figure}[hbtp]
	\centering
    \includegraphics[width=\textwidth]{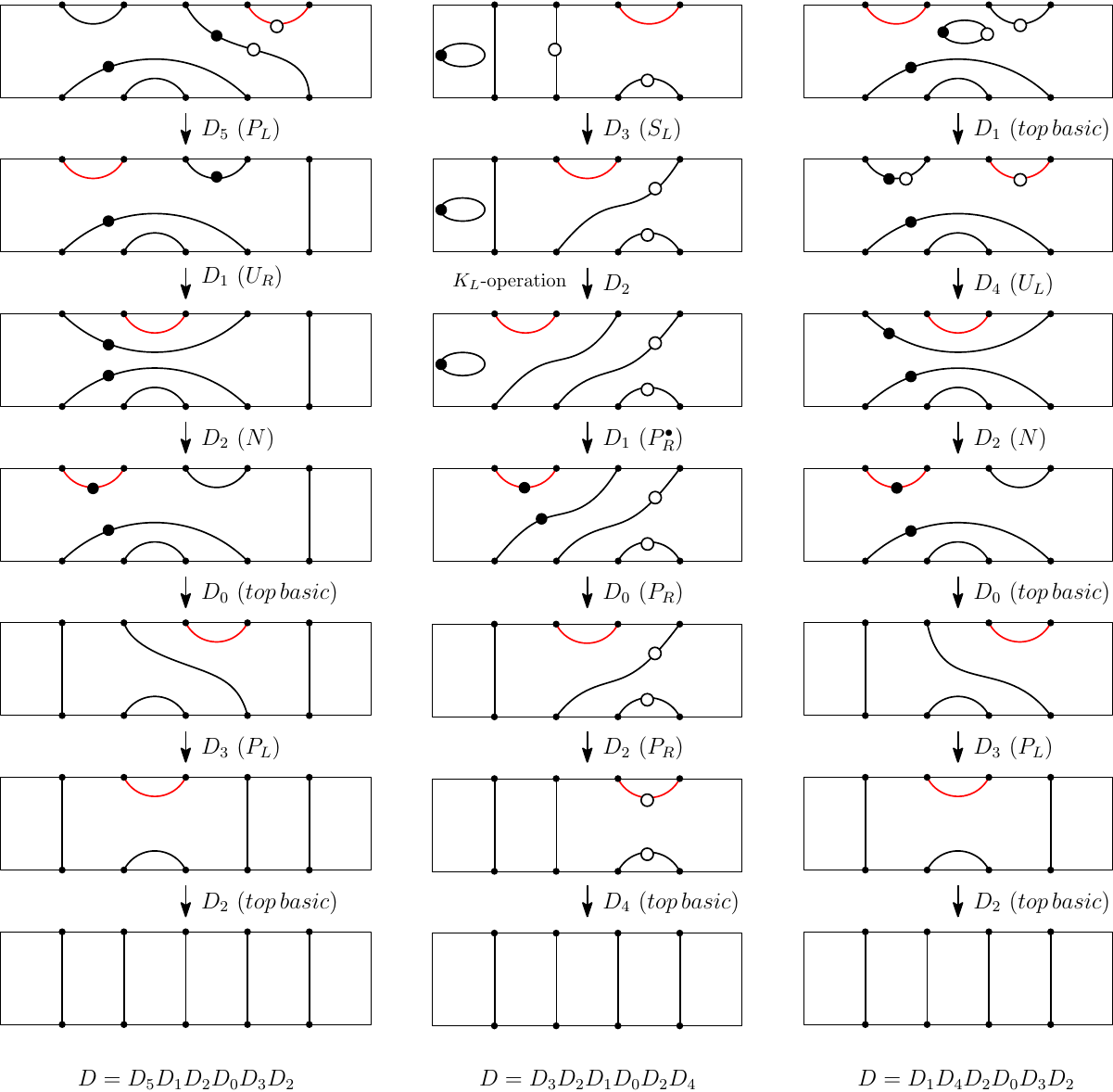} 
	\caption{Examples of cp and $K_L$ operations.}
	\label{appendix}
\end{figure}

\begin{table}[ht]
    \centering
\begin{tabular}{|c|c|l|c|}
\hline
  $e$   & $\bf{d}$ & \quad \quad \quad \quad \quad \quad \quad \quad $\weight$ & $\nu(D)-\nu(\cp_e(D))$\\
     \hline
  $N$   &  $\bullet^\alpha \, \circ^\beta$    & $\weight(j,k)_D=\alpha(j-1)+\beta(n+2-k)$   & 2\\
       &                                 & $\weight(i+1,  k)_{\cp_e(D)}= \beta(n+2-k)$      &        \\
        &                                & $\weight(j,i)_{\cp_e(D)}=\alpha(j-1)$     & \\
       \hline
  $U_R$ &   $\bullet \, \circ^\beta$ & $\weight(i+2, k)_D=i+1+\beta(n+2-k)$ &  -2 \\
        &                       & $\weight(i,k)_{\cp_e(D)}=i-1+\beta(n+2-k)$ & \\
  \hline
  $P_R$  &  &  & 2 \\
        & $\bullet (\circ \, \bullet)^p$ & $\weight(i+2, j')_D=j-1+p(n+1)$ & \\
        &                           & $\weight(i, j')_{\cp_e(D)}=j-1+p(n+1)$ &\\
   \cline{2-3}
   & $\circ (\bullet \, \circ)^p$ & $\weight(i+2, j')_D=n-i+p(n+1)$ &  \\
   &                    &   $\weight(i,j')_{\cp_e(D)}=p(n+1)$, & \\
   &                   & $\weight(i+1, i+2)_{\cp_e(D)}=n-i$ & \\
   \cline{2-3}
    & $(\bullet \, \circ)^{p+1}$ & $\weight(i+2,j')_D=(p+1)(n+1)$ &  \\
    &                       & $\weight(i, j')_{\cp_e(D)}=(p+1)(n+1)$ & \\
    \cline{2-3}
     & $(\circ \, \bullet)^{p+1}$ & $\weight(i+2, j')_D=(p+1)(n+1)-i-2+j$ &  \\
     &                      & $\weight(i+1, i+2)_{\cp_e(D)}=n-i$ & \\
     &                      & $\weight(i, j')_{\cp_e(D)}=j-1+p(n+1)$, &\\
    \hline
   $P_R^\bullet$ &  &  $\weight(\lob)_D=1$ & 0  \\
   \hline
  $S_R$ & & &  -2 \\
   & $\bullet (\circ \, \bullet)^p$ & $\weight(i+2, (i+2)')_D=i+1+p(n+1)$ & \\
   &  & $\weight(i, (i+2)')_{\cp_e(D)}=i-1+p(n+1)$ & \\
   
   \cline{2-3}
   & $\circ (\bullet \, \circ)^p$ & $\weight(i+2, (i+2)')_D=n-i+p(n+1)$ & \\
   &                        & $\weight(i,(i+2)')_{\cp_e(D)}=p(n+1)-2$, &\\
   &                       &  $\weight(i+1, i+2)_{\cp_e(D)}=n-i$   & \\
   \cline{2-3}
    & $(\bullet \, \circ)^{p+1}$ & $\weight(i+2,(i+2)')_D=(p+1)(n+1)$ &  \\
    &                       & $\weight(i, (i+2)')_{\cp_e(D)}=(k+1)(n+1)-2$ & \\
    \cline{2-3}
     & $(\circ \, \bullet)^{p+1}$ & $\weight(i+2, (i+2)')_D=(p+1)(n+1)$ &  \\
     &                      &  $\weight(i, (i+2)')_{\cp_e(D)}=i-1+p(n+1)$, & \\
      &                     &   $\weight(i+1, i+2)_{\cp_e(D)}=n-i$ & \\
    \hline
  \end{tabular}
\caption{Computations for the proof of statement (3) of Lemma~\ref{lemma:cplength}.}
    \label{tabular}
\end{table}

\def\cprime{$'$}

\end{document}